\newtheorem{theorem}{Theorem}[section]
\newtheorem{thm}[theorem]{Theorem}
\newtheorem{lem}[theorem]{Lemma}
\newtheorem{cor}[theorem]{Corollary}
\newtheorem{prop}[theorem]{Proposition}
\newcommand\cg[1]{ \mathcal{C}(#1)}
\DeclareMathOperator{\CC}{cc}
\begin{document}
\title{Compressed Cliques Graphs, Clique Coverings and Positive Zero Forcing}

\author{
  Shaun Fallat
    \thanks{Department of Mathematics and Statistics, University of Regina,
    Regina, Saskatchewan S4S 0A2, Canada. Research supported in part by an NSERC Discovery Research Grant,
    Application No.: RGPIN-2014-06036.
       Email: \texttt{Shaun.Fallat@uregina.ca}.}$\,$ \thanks{Corresponding Author}
  \and
  Karen Meagher
    \thanks{Department of Mathematics and Statistics, University of Regina,
    Regina, Saskatchewan S4S 0A2, Canada. Research supported in part by an NSERC Discovery Research Grant,
    Application No.: RGPIN-341214-2013.
       Email: \texttt{Karen.Meagher@uregina.ca}.}
  \and
  Abolghasem Soltani
    \thanks{Department of Mathematics and Statistics, University of Regina,
    Regina, Saskatchewan S4S 0A2, Canada.
    Research supported in part by an NSERC Discovery Research Grant,
    Application No.: RGPIN-2014-06036.
       Email: \texttt{abel.soltani@gmail.com}.}
  \and
  Boting Yang
    \thanks{Department of Computer Science, University of Regina,
        Regina, Saskatchewan S4S 0A2, Canada. Research supported in part by an NSERC Discovery Research Grant, Application No.: RGPIN-2013-261290.
        Email: \texttt{Boting.Yang@uregina.ca}.}
}

%Submitted to Journal of graph theory Aug 27, 2015

\maketitle

\begin{abstract}
  Zero forcing parameters, associated with graphs, have been studied
  for over a decade, and have gained popularity as the number of
  related applications grows. In particular, it is well-known that
  such parameters are related to certain vertex coverings. Continuing
  along these lines, we investigate positive zero forcing within the
  context of certain clique coverings. A key object considered here is
  the \emph{compressed cliques graph}. We study a number of properties
  associated with the compressed cliques graph, including: uniqueness,
  forbidden subgraphs, connections to Johnson graphs, and positive
  zero forcing.

 \vspace{.4cm}

\noindent \emph{Keywords:}
Positive zero forcing number, cliques,
 clique cover number, compressed cliques graph, Johnson graph, forbidden subgraphs.

\vspace{.4cm}
\noindent \emph{AMS Subject Classifications:} 05C50, 05C75, 05C69
\end{abstract}

\section{Introduction}

Suppose that $G$ is a simple finite graph with vertex set $V=V(G)$ and edge
set $E=E(G)$. We use $\{u, v\}$ to denote an edge with endpoints $u$ and $v$.
Further, for a graph $G=(V,E)$ and $v \in V$, the vertex set $\{u: \{u,v\} \in
E\}$ is the \emph{neighbourhood} of $v$, denoted as $N_G(v)$, and the size of
the neighbourhood of $v$ is called the \emph{degree} of $v$. For $V'
\subseteq V$, the vertex set $\{x: \{x,y\} \in E, x \in V \setminus
V'$ and $y\in V' \}$ is the \emph{neighbourhood} of $V'$, denoted as
$N_G(V')$. Also, we let the set $N_{G}[v] = \{v \} \cup N_{G}(v)$
denote the \emph{closed neighbourhood} of the vertex $v$.  We use $G[V']$ to
denote the subgraph induced by $V'$, which consists of all vertices of
$V'$ and all of the edges in $G$ that contain only vertices from $V'$.  We
use $G-v$ to denote the subgraph induced by $V \setminus \{v\}$. 

For an integer $n \geq 1$, we let $K_{n}$ denote the complete graph on $n$
vertices. We will also refer to a complete graph on $n$ vertices as a \emph{clique}
on $n$ vertices. A \emph{cycle} on $n$ vertices $\{v_1, v_2, \ldots, v_n\}$ is a graph with
edges $E=\{ \{v_1, v_2\}, \{v_2, v_3\}, \ldots, \{v_{n-1}, v_n\}, \{v_n,v_1\}\}$. Such a 
cycle will also be referred to as an $n$-cycle or a cycle of length $n$. 

Our interest in this work is to consider how positive zero forcing
sets are related to cliques in a graph and, further, clique intersection,
and clique coverings. Zero forcing on a graph was originally designed to be used
as a tool to bound the maximum nullity associated with collections of symmetric matrices 
derived from a graph $G$ (see, for example, \cite{MR2388646}). Positive zero forcing was
an adaptation of conventional zero forcing to play a similar role for positive semidefinite
matrices (see \cite{zplus}).

Zero forcing in general is a graph colouring problem in which an
initial set of vertices are coloured black, while the remaining
vertices are coloured white. Using a designated colour rule, the
objective is to change the colour of as many white vertices to black
as possible. There are two common rules, these are known as zero
forcing and positive zero forcing. The process of a black vertex $u$
changing the colour of a white vertex $v$ to black is usually referred
to as ``$u$ forces $v$''. The size of the smallest initial set of
black vertices that will ``force'' all vertices black is called
either the \emph{zero forcing number} or the \emph{positive zero
  forcing number} of $G$ depending on which rule is used. This number
is denoted by either $Z(G)$ or $Z_+(G)$ (again, depending on which rule
is used).

Here we are more interested in the behaviour of the positive zero
forcing number in connection with cliques and clique coverings in a
graph. In particular, we consider $Z_{+}(G)$, when maximal cliques of
$G$ satisfy certain intersection properties.  Consequently, we now
carefully review some basic terminology associated with positive zero
forcing in a graph.

The positive zero forcing rule is also based on a colour change rule
similar to the zero forcing colour change rule (see \cite{zplus} and
also \cite{ekstrand2011positive} and \cite{ekstrand2011note}).  In
this case, suppose $G$ is a graph and $B$ a subset of vertices; we
initially colour all of the vertices in $B$ black, while all remaining
vertices are designated white. Let $W_1,\dots,W_k$ be the sets of
vertices in each of the connected components of $G$ after removing the
vertices in $B$.  If $u$ is a vertex in $B$ and $w$ is the only white
neighbour of $u$ in the graph induced by the subset of vertices
$W_i\cup B$, then $u$ can force the colour of $w$ to black. This rule
is called the \emph{positive colour change rule}.  The size of
the smallest positive zero forcing set of a graph $G$ is denoted by
$Z_+(G)$. For all graphs $G$, since a zero forcing set is also a
positive zero forcing set we have that $Z_+(G) \leq Z(G)$. A number of
facts have been demonstrated for the positive zero forcing number,
see, for example, \cite{zplus}. If a subset $S$ of $V(G)$ is a
positive zero forcing set with $|S|=Z_{+}(G)$, then we refer to $S$ as
an {\em optimal} positive zero forcing set for $G$.

It is known that by following the sequence of forces throughout the
conventional zero forcing process, a path covering of the vertices is
derived (see \cite[Proposition 2.10]{zplus} for more
details). When the positive colour change rule is applied, two or
more vertices can perform forces at the same time, and a vertex can
force multiple vertices from different components at the same time.
This implies that the positive colour change rule produces a
partitioning of the vertices into sets of vertex disjoint induced
rooted trees, which we will refer to as \emph{forcing trees}, in the graph.

Given $K_n$, the complete graph on $n \geq 2$ vertices, it is not difficult to observe that
\[ Z_{+}(K_n) = Z(K_n) = n-1.\] Furthermore, even though the parameters $Z$ and $Z_{+}$ are not
generally monotone on induced subgraphs, it is true that if $G$ contains a clique on $k$ vertices,
then both $Z(G)$ and $Z_{+}(G)$ are at least $k-1$. So in some sense, cliques in a graph play an 
important role in determining both zero forcing and positive zero forcing sets in a graph. We
explore this correspondence further in this paper. As an example, consider chordal graphs (that is,
graph with no induced cycles of length 4 or more). For chordal graphs it is known that the positive
zero forcing number is equal to the number of vertices minus the fewest number of cliques that contain
all of the edges (see, for example, \cite{FMY}).

Our paper is divided into twelve sections. The next two sections deal with
certain types of clique coverings, followed by two sections devoted to
a graph, and its uniqueness, that results from these identified clique
coverings, which we call a \emph{compressed cliques graph}. In the
sixth the seventh sections, we discuss the clique covering number of the
compressed cliques graph and bounds on the positive zero forcing
number. Section 8 connects compressed cliques graphs with certain
well-studied Johnson graphs, and Section 9 is concerned with forbidden subgraphs associated
with compressed cliques graphs. Following this, the next two sections discuss examples,
including a new family of graphs called the \emph{vertex-clique} graph
and a related concept we call the \emph{reduced graph}, along with additional examples and facts
relating the positive zero forcing number and compressed cliques graphs. We conclude with a brief 
outline for potential future research along these lines.

\section{Simply intersecting clique coverings}

Recall that a \emph{clique} in a graph is a subset of vertices which induces a
complete subgraph.  A clique in a graph is \emph{maximal} if no
vertex in the graph can be added to it to produce a larger clique.  A
\emph{clique covering} of a graph is a set of cliques with the property
that every edge is contained in at least
one of the subgraphs induced by one of the cliques in the set. (Note
that unless the graph has isolated points, a clique covering that
contains every edge also contains every vertex.) The \emph{size} of
a clique covering is the number of cliques in the covering. For a graph $G$, we
denote the size of a smallest clique covering by $\CC(G)$. Further, we call a given clique covering
\emph{minimal} if the number of cliques in this covering is equal to $\CC(G)$. Observe that a graph $G$
is a complete graph if and only if $\CC(G) = 1$.

A clique covering for a graph $G$ is called a {\em min-max clique
  covering} if its size is $\CC(G)$ and every clique in it is maximal.
Let $G$ be a graph and let $\{C_1,C_2, \dots, C_\ell\}$ be a min-max
clique covering.  Any minimal clique covering can be transformed into a min-max
clique covering by appropriately adding vertices to the non-maximal
cliques.

\begin{prop}
  For any graph $G$, there exists a clique covering with size $\CC(G)$
  in which every clique is maximal, that is, there is always a min-max clique
  covering of $G$. 
\end{prop}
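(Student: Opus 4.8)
The plan is to start from an arbitrary minimum clique covering and enlarge each of its cliques to a maximal one, arguing that this operation neither destroys the covering property nor increases the number of cliques. First I would invoke the definition of $\CC(G)$ to fix a clique covering $\{C_1, C_2, \ldots, C_\ell\}$ of $G$ with $\ell = \CC(G)$. The basic structural fact I would rely on is that every clique is contained in at least one maximal clique: given any clique $C$, one repeatedly adjoins a vertex adjacent to all current members of the clique until no such vertex remains, and since $G$ is finite this process terminates at a maximal clique $C'$ with $C \subseteq C'$.

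Next I would replace each $C_i$ by such a maximal clique $C_i' \supseteq C_i$ and check that $\{C_1', C_2', \ldots, C_\ell'\}$ is again a clique covering. This is the only point genuinely requiring verification: because $C_i \subseteq C_i'$ and $C_i'$ induces a complete subgraph, every edge among the vertices of $C_i$ is also an edge among the vertices of $C_i'$; hence every edge covered by $C_i$ remains covered by $C_i'$, and so the enlarged family covers all edges of $G$.

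Finally I would confirm that the size is still exactly $\CC(G)$. The enlarged family consists of at most $\ell$ distinct cliques; were some of the $C_i'$ to coincide, deleting duplicates would produce a clique covering with strictly fewer than $\ell = \CC(G)$ cliques, contradicting the minimality built into the definition of $\CC(G)$. Hence the $C_i'$ are pairwise distinct, the covering has exactly $\CC(G)$ cliques, and each of them is maximal, which is precisely a min-max clique covering.

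As for the main obstacle, there is essentially no deep step: the argument is a routine ``extend to a maximal object'' construction, and its only subtle point is verifying that enlarging a clique to a maximal one preserves coverage of the edges, which is immediate once one observes that a clique containing another clique contains all of the latter's edges. The finiteness of $G$ is what guarantees the extension terminates, and the minimality of $\CC(G)$ is what rules out any accidental collapse in the count.
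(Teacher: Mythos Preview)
Your argument is correct and follows exactly the approach the paper indicates: the paper simply remarks that ``any minimal clique covering can be transformed into a min-max clique covering by appropriately adding vertices to the non-maximal cliques,'' and you have fleshed out precisely this idea, including the observation that no two of the enlarged cliques can coincide without contradicting $\CC(G)$.
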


We note that there are graphs (e.g. the wheel graph on at least seven
vertices) for which there exist multiple clique covers of size
$\CC(G)$ in which not all cliques are maximal. Further, a min-max
clique covering for a graph may not be unique. Consider the graph,
$\mathrm{circ}(6, \{1, 2\})$, given in
Figure~\ref{fig:circulant}. This graph has two min-max clique covers,
which are identified in Section~\ref{sec:nonunique}.

Let $\mathcal{C} = \{C_1,\dots,C_\ell\}$ be a clique covering for $G$.
If for any set of distinct triples $i,j,k \in \{1, \dots, \ell\}$ it
is the case that $C_i \cap C_j \cap C_k = \emptyset$, we say the clique
covering has {\em simple intersection}. If the clique covering has
simple intersection, then a vertex that is in $C_i \cap C_j$ (where $i
\neq j$) does not belong to any other clique in $\mathcal{C}$. 

There are many examples of graphs with a clique covering with simple
intersection. It is not hard to see that the only tree that possesses
a clique covering satisfying simple intersection is a path. For a
cycle on $n$ vertices, the set of edges forms a min-max clique
covering with simple intersection.  It turns out that both of the
min-max clique covers of $\mathrm{circ}(6, \{1, 2\})$ given in the
proof of Theorem~\ref{thm:unique} have simple intersection. The wheel
on at least seven vertices does not have a minimal clique covering
that satisfies simple intersection; the center vertex must belong to
at least three cliques in such a clique covering.  As a final example,
consider the graph in Figure~\ref{fig:2-tree}. This graph has clique
cover number equal to three and a clique cover that satisfies simple
intersection, but it has no min-max clique cover that satisfies simple
intersection. Throughout this paper, we will only consider the
property of simple intersection for min-max clique coverings.

\begin{figure*}[h]
\begin{center}
\begin{tikzpicture}[scale=.75]
\GraphInit[vstyle=simple]
\tikzset{VertexStyle/.style = {shape = circle,fill = black,minimum size = 5pt,inner sep=0pt}}
% Vertices
\Vertex[x=0, y=1.5] {A}
\Vertex[x=1, y=0] {B}
\Vertex[x=2, y=1.5] {C}
\Vertex[x=3, y=0] {D}
\Vertex[x=4, y=1.5] {E}
% Edges
\Edge(A)(B) \Edge(B)(C)
\Edge(C)(D) \Edge(B)(D)
\Edge(A)(C) \Edge(C)(E)
\Edge(D)(E) 
\end{tikzpicture}
\end{center}
\caption{No min-max clique cover of this graph has simple intersection.\label{fig:2-tree}}
\end{figure*}
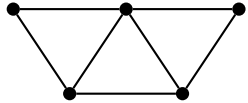

\section{Non-unique clique coverings}
\label{sec:nonunique}

Let $G$ be a graph with $n$ vertices, which are labeled $0, 1, \dots,
n-1$, and let $S=\{k_1, k_2, \dots, k_\ell\}$ be a set of positive
integers such that $k_1< k_2 < \dots < k_\ell < (n+1)/2$. 
The \emph{circulant graph}, denoted by $\mathrm{circ}(n,S)$, has each
vertex $i$ in $\{0, \dots, n-1\}$ adjacent to $i \pm k_1$, $i \pm k_2$, $ \dots,
i \pm k_\ell \pmod{n}$. The graph $\mathrm{circ}(6, \{1, 2\})$ plays a key
role in identifying graphs that possess a unique min-max clique
covering satisfying simple intersection. This graph is isomorphic to
the graph in Figure~\ref{fig:circulant}. This graph has one vertex for
each subset of $\{1,2,3,4\}$ of size two, and two vertices are
adjacent if the sets intersect. This graph is also known as the
Johnson graph $J(4,2)$ (see also Section 8).

The graph $\mathrm{circ}(6,\{1,2\})$ (see Figure \ref{fig:circulant}) has two min-max clique
coverings. One clique covering of this graph is $\{C_1,C_2,C_3,C_4\}$
where $C_i$ is the set of all vertices with a label that contains an
$i$.  A second clique covering is formed by the following sets of
vertices of $\mathrm{circ}(6,\{1,2\})$:
\begin{align*}
& \{ \{1,2\}, \{1,3\}, \{2,3\} \},\quad  \{ \{1,2\}, \{1,4\}, \{2,4\} \}, \\
& \{ \{1,3\}, \{1,4\}, \{3,4\} \} ,\quad  \{ \{2,3\}, \{2,4\}, \{3,4\} \}.
\end{align*}

\begin{figure*}[h]
\begin{center}
\begin{tikzpicture}[scale=.75]
\tikzset{VertexStyle/.style = {shape = circle,fill = black,minimum size = 5pt,inner sep=0pt}}
% Vertices
\Vertex[L={$\{3,4\}$}, x=1, y=0,LabelOut=true,Lpos=270] {A}   
\Vertex[L={$\{2,4\}$}, x=-0.5, y=1, LabelOut=true,Lpos=270] {B}    
\Vertex[L={$\{2,3\}$}, x=2.5, y=1, LabelOut=true,Lpos=0] {C}
\Vertex[L={$\{1,2\}$}, x=1, y=3.5, LabelOut=true,Lpos=90] {F}
\Vertex[L={$\{1,4\}$}, x=-0.5, y=2.5, LabelOut=true,Lpos=180] {D} 
\Vertex[L={$\{1,3\}$}, x=2.5, y=2.5, LabelOut=true,Lpos=0] {E}
% Edges
\Edge(A)(B) \Edge(B)(C)
\Edge(C)(F) \Edge(B)(D)
\Edge(A)(C) \Edge(C)(E)
\Edge(D)(E) \Edge(D)(F)
\Edge(E)(F) \Edge(B)(F)
\Edge(A)(D) \Edge(A)(E)
\end{tikzpicture}
\end{center}
\caption{$\mathrm{circ}(6, \{1,2\})$.\label{fig:circulant}}
\end{figure*}

The next fact is a key to characterizing the graphs that possess more than one min-max clique covering.

\begin{theorem}\label{thm:unique}
  If a graph $G$ has two distinct min-max clique covers that both
  satisfy simple intersection, then $G$ contains $\mathrm{circ}(6,\{1,2\})$ as
  an induced subgraph.
\end{theorem}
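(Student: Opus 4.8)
The plan is to start from the two given min-max clique covers, say $\mathcal{A}$ and $\mathcal{B}$, both of size $\CC(G)$ and both satisfying simple intersection, and to extract from their disagreement an induced copy of $\mathrm{circ}(6,\{1,2\})$ (the octahedron $K_{2,2,2}$). Since $\mathcal{A}\neq\mathcal{B}$ have the same finite size, neither can contain the other, so there is a clique $A\in\mathcal{A}\setminus\mathcal{B}$. As $A$ is maximal, no single clique of $\mathcal{B}$ can contain $A$ (such a clique would equal $A$ and hence lie in $\mathcal{B}$); in particular $|A|\geq 3$, since a single edge or vertex would be covered by one clique of $\mathcal{B}$. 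For each $a\in A$ let $f(a)$ be the set of cliques of $\mathcal{B}$ that contain $a$. Simple intersection of $\mathcal{B}$ gives $|f(a)|\leq 2$, and every edge $\{a,a'\}$ inside the clique $A$ is covered by some clique of $\mathcal{B}$, so $f(a)\cap f(a')\neq\emptyset$ for all $a,a'\in A$.

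First I would run a short \emph{intersecting-family} argument on the sets $f(a)$. No $f(a)$ can be a singleton: a clique common to $f(a)$ and to every $f(a')$ would contain all of $A$, which we ruled out. Hence every $f(a)$ is a $2$-element subset of $\mathcal{B}$, the family $\{f(a):a\in A\}$ is pairwise intersecting, and its total intersection is empty (again because no clique of $\mathcal{B}$ contains $A$). Viewing each $f(a)$ as an edge on the vertex set $\mathcal{B}$, a pairwise-intersecting family of edges with no common vertex must form a triangle. Thus there are three distinct cliques $B_1,B_2,B_3\in\mathcal{B}$ so that each $f(a)$ equals one of $\{B_1,B_2\}$, $\{B_1,B_3\}$, $\{B_2,B_3\}$, and all three values actually occur. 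Choosing one vertex from each class yields $x,y,z\in A$ with $x\in(B_1\cap B_2)\setminus B_3$, $y\in(B_1\cap B_3)\setminus B_2$, and $z\in(B_2\cap B_3)\setminus B_1$ (the exclusions follow from simple intersection of $\mathcal{B}$). These form a triangle, being three vertices of the clique $A$.

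Next I would produce the three ``opposite'' vertices. Since $B_3$ is maximal and $x\notin B_3$, the vertex $x$ is non-adjacent to some $x'\in B_3$; similarly pick $y'\in B_2$ with $y\not\sim y'$ and $z'\in B_1$ with $z\not\sim z'$. This gives the three non-edges $xx'$, $yy'$, $zz'$. All six vertices are distinct (the non-edges force them apart), and the following edges are immediate from membership in a common $B_i$: $xy',xz'$ (through $B_2,B_1$), $yx',yz'$ (through $B_3,B_1$), $zx',zy'$ (through $B_3,B_2$), together with $xy,xz,yz$ inside $A$. It remains only to verify the three edges of the opposite triangle $x'y',x'z',y'z'$, and this is the crux of the argument.

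The key idea, and the step I expect to be the main obstacle, is to obtain these last three edges from simple intersection of the \emph{other} cover $\mathcal{A}$, using $x,y,z$ as hinges. Consider $x'\sim z'$: both $x'\in B_3$ and $z'\in B_1$ are adjacent to $y$ (which lies in $B_1\cap B_3$), and neither $x'$ nor $z'$ lies in $A$ (each is non-adjacent to a vertex of the clique $A$). Hence in $\mathcal{A}$ the edges $\{x',y\}$ and $\{z',y\}$ are each covered by a clique of $\mathcal{A}$ other than $A$; but $y$ lies in at most two cliques of $\mathcal{A}$, one of which is $A$, so both edges are covered by the \emph{same} remaining clique $A_y\in\mathcal{A}$, forcing $x',z'\in A_y$ and therefore $x'\sim z'$. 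The edges $x'\sim y'$ and $y'\sim z'$ follow identically, using $z$ (in $B_2\cap B_3$) and $x$ (in $B_1\cap B_2$) as the respective hinges. This makes $\{x,y,z,x',y',z'\}$ an induced octahedron $K_{2,2,2}=\mathrm{circ}(6,\{1,2\})$, completing the proof. The delicate point is recognizing that the two hypotheses must be played off against each other — the triangle lemma consumes the simple intersection of $\mathcal{B}$, while the opposite triangle is precisely what the simple intersection of $\mathcal{A}$ buys — and then checking that all six vertices are genuinely distinct and that the six constructed vertices induce no edges beyond those of $K_{2,2,2}$.
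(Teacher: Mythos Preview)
Your proof is correct and follows the same strategy as the paper's: locate a clique present in one cover but not the other, show that its vertices carry a ``triangle'' pattern of labels coming from three cliques of the second cover, and then use maximality of those three cliques together with simple intersection of the first cover to produce the three opposite vertices of the octahedron. Your packaging is somewhat cleaner---the intersecting-family observation (pairwise-intersecting $2$-sets with empty common intersection form a triangle) replaces the paper's step-by-step label chase, and picking $x',y',z'$ directly as non-neighbours avoids explicitly naming a fourth clique (the paper's label~$4$)---but the underlying moves, including your ``hinge'' argument for the edges $x'y',x'z',y'z'$, match the paper's use of simple intersection exactly.
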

\begin{proof}
  Let $\mathcal{C} =\{C_1,C_2,\dots,C_k\}$ be a min-max clique
  covering for a graph $G$ with simple intersection. Label the
  vertices in $G$ by the cliques that contain them: if vertex $v$ is
  in both $C_i$ and $C_j$, then $v$ gets the label $\{i,j\}$; if $v$
  is in $C_i$, but no other cliques, then $v$ gets the label
  $\{i\}$. Distinct vertices may have the same label. Since
  $\mathcal{C}$ has simple intersection, every vertex has a label with
  size $1$ or $2$. Since every edge must be in at least one clique of
  $\mathcal{C}$, two vertices are adjacent in $G$ if and only if their
  labels are intersecting sets. Further, the labels on the vertices in
  the clique $C_j$ must be in the set $\{\{j\},\{j,1\},\{j,2\},\dots,
  \{j,k\}\}$.

  Assume that $\mathcal{D} = \{D_1, D_2, \dots, D_k\}$ is another
  min-max clique covering of $G$. If there is a vertex in $G$ with the
  label $\{i\}$, then there is a clique in $\mathcal{D}$ that contains
  the vertex. Further, every other vertex in this clique must have a label
  that contains $i$. Since $\mathcal{D}$ is a min-max clique covering,
  this clique must be maximal and thus must be exactly the clique
  $C_i$, so we know that in this case, $C_i \in \mathcal{D}$. Further,
  if the labels of all the vertices in some $D \in \mathcal{D}$ have a common
  element, say $i$, then $D =C_i$ (again since $D$ is maximal).

  Since $\mathcal{C}$ and $\mathcal{D}$ are not equal, we can assume
  that $C_1 \not \in \mathcal{D}$. This implies that $C_1$ does not
  contain any vertices labeled with $\{1\}$, and $C_1$ must contain
  at least two vertices with distinct labels. Assume that $C_1$
  contains vertices labeled with $\{1,2\}$ and $\{1,3\}$.  The edge
  between vertices labeled $\{1,2\}$ and $\{1,3\}$ must be in a
  clique of $\mathcal{D}$, say $D_a$. The labels on the vertices in
  $D_a$ must intersect, but not all contain $1$ as a common
  element. Thus we can assume without loss of generality that there
  are three vertices in $D_a$ that have the labels 
  $\{\{1,2\},\{1,3\},\{2,3\}\}$.

  We know that the clique $C_1$ contains vertices labeled $\{1,2\}$
  and $\{1,3\}$. Further, there must be a vertex in $C_1$ with the
  label $\{1,x\}$, where $x \not \in \{1,2,3\}$, since if there was no
  such vertex, then any vertex labeled with $\{2,3\}$ could be added
  to $C_1$, this contradicts the fact that $C_1$ is maximal (and not in
  $\mathcal{D}$).

  Similarly, the clique $C_2 \in \mathcal{C}$ contains vertices
  labeled with $\{1,2\}$ and $\{2,3\}$. Since $C_2$ must be maximal,
  it contains another vertex (otherwise we could add the vertex
  labeled $\{1,3\}$ to it). Assume that this extra vertex is labeled
  by $\{2,y\}$ (here we could have that $y=2$) with $y \neq 1,3$.

  Consider the vertex labeled $\{1,2\}$ in the clique covering
  $\mathcal{D}$. There is an edge between it and the vertex labeled
  $\{1,x\}$ and the vertex labeled $\{2,y\}$. Since the vertex
  labeled $\{1,2\}$ is already in the clique $D_a$ and $\mathcal{D}$
  has simple intersection, these two edges must be in the same clique
  of $\mathcal{D}$, which implies that $x=y$. Since $x \neq 2,3$, we
  will assume without loss of generality that $x=y = 4$. (Note that
  $\{1,x\}$ cannot be in the clique $D_a$, since $x \not \in
  \{1,2,3\}$ and $\{2,y\} \not \in D_a$ since $y \neq 1,3$.) Thus
  there are vertices labeled with $\{1,4\}$ and $\{2,4\}$.

  Next consider the clique $C_3$, this clique contains vertices
  labeled by $\{1,3\}$ and $\{2,3\}$. Since $C_3$ is maximal and it
  does not contain the vertex labeled $\{1,2\}$, it must contain a
  vertex labeled $\{3,z\}$, where $z \neq 1,2$ (but it could be $3$).

  Consider the vertex labeled $\{1,3\}$, this vertex is adjacent to
  the vertex labeled $\{1,4\}$ and the vertex labeled $\{3,z\}$. In
  the clique covering $\mathcal{D}$, the vertex labeled $\{1,3\}$ is
  in $D_a$. But neither $\{1,4\}$, nor $\{3,z\}$ are in $D_a$
  (as $\{1,4\}$ does not intersect with $\{2,3\}$, and $\{3,y\}$ does not
  intersect with $\{1,2\}$). Since $\mathcal{D}$ has simple
  intersection, $\{1,3\}$ can only be in one other clique. So the
  vertices labeled with $\{1,4\}$ and $\{3,z\}$ must in this other
  clique in $\mathcal{D}$. This means that they are adjacent and that
  $z=4$. Thus there is a vertex labeled by $\{3,4\}$.

  It now follows that $G$ contains a subgraph isomorphic to
  $\mathrm{circ}(6,\{1,2\})$ (see Figure~\ref{fig:circulant}).
\end{proof}

Unfortunately,  the converse to Theorem~\ref{thm:unique} is false in
general. A simple example can be derived from the graph $\mathrm{circ}(6,
\{1, 2\})$ by adding an additional vertex, see
Figure~\ref{fig:circulant-1}.  It is easy to verify that this graph
has a unique min-max clique covering that satisfies simple
intersection, but certainly contains $\mathrm{circ}(6,\{1,2\})$ as an induced
subgraph.

\begin{figure*}[h]
\begin{center}
\begin{tikzpicture}[scale=.75]
\tikzset{VertexStyle/.style = {shape = circle,fill = black,minimum size = 5pt,inner sep=0pt}}
% Vertices
\Vertex[L={$\{3,4\}$}, x=1, y=0,LabelOut=true,Lpos=270] {A}   
\Vertex[L={$\{2,4\}$}, x=-0.5, y=1, LabelOut=true,Lpos=180] {B}    
\Vertex[L={$\{2,3\}$}, x=2.5, y=1, LabelOut=true,Lpos=0] {C}
\Vertex[L={$\{1,2\}$}, x=1, y=3.5, LabelOut=true,Lpos=90] {F}
\Vertex[L={$\{1,4\}$}, x=-0.5, y=2.5, LabelOut=true,Lpos=180] {D} 
\Vertex[L={$\{1,3\}$}, x=2.5, y=2.5, LabelOut=true,Lpos=0] {E}
\Vertex[L={$\{4\}$}, x=-2, y=0, LabelOut=true,Lpos=2700] {G}

% Edges
\Edge(A)(B) \Edge(B)(C)
\Edge(C)(F) \Edge(B)(D)
\Edge(A)(C) \Edge(C)(E)
\Edge(D)(E) \Edge(D)(F)
\Edge(E)(F) \Edge(B)(F)
\Edge(A)(D) \Edge(A)(E)
\Edge(G)(A) \Edge(G)(B) \Edge(G)(D)
\end{tikzpicture}
\end{center}
\caption{A graph that contains $\mathrm{circ}(6, \{1,2\})$, and has a unique min-max clique cover.\label{fig:circulant-1}}
\end{figure*}
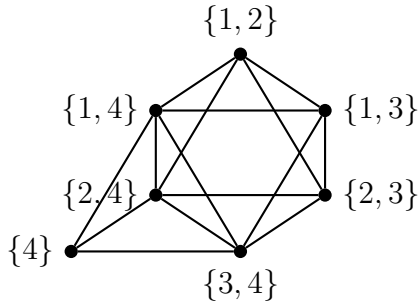

\section{Compressed cliques graphs}

If a graph has a min-max clique covering that satisfies simple intersection,
then the graph can be simplified in a way that allows us to determine
both the clique covering number and the positive zero forcing number of the
original graph from the simplified graph.

Let $G$ be a graph and $S=\{v_1, v_2, \dots, v_k\}$ a set of vertices
in $G$. The {\em contraction of $S$ in $G$} is the graph formed by
replacing the vertices in $S$ by a single vertex $v_S$, where $v_S$ is
adjacent to any vertex in $V(G) \setminus S$ that is adjacent to a
vertex in $S$.

Let $G$ be a graph with a min-max clique covering $\mathcal{C} =
\{C_1,C_2, \dots, C_{\ell}\}$ that has simple intersection.  We
can construct a new graph that is related to $G$ called a {\em
  compressed cliques graph}. For distinct $i,j \in\{1, \dots, \ell\}$
define the sets
\[
C_{i,j} = C_i \cap C_j, \quad  
C_{i,i} = C_i \backslash \bigcup_{\stackrel{j \in  \{1,\dots, \ell\}}{j \neq i}}C_j,
\]
(these sets may be empty). First, for each pair of distinct $i,j \in \{1,
\dots, \ell\}$, if $C_{i,j}$ is non-empty, then contract all the
vertices in $C_{i,j}$ to a single vertex labeled $v_{i,j}$. Second, if
$C_{i,i}$ is non-empty, then contract all the vertices in $C_{i,i}$ to
a single vertex; label this vertex $v_{i,i}$. In this graph the
vertices $v_{i,j}$ and $v_{i',j'}$ are adjacent if and only if the
sets $\{i,j\}$ and $\{i',j'\}$ have non-empty intersection.

We have seen that a graph may have multiple min-max clique covers that
satisfy simple intersection. For example the graph
$\mathrm{circ}(6,\{1,2\})$. However, for this graph, the compressed cliques
graphs, for either clique cover are isomorphic.  In fact, both are isomorphic to 
$\mathrm{circ}(6, \{1,2\})$.
Thus, an obvious
question is to verify that the associated compressed cliques graphs
are isomorphic in the presence of distinct min-max cliques covers satisfying
simple intersection (see the next section). Assuming this fact, we denote {\em the} compressed
cliques graph of $G$ by $\cg{G}$ and give some simple examples.  For
any integer $n$,
\[
\cg{K_n} = K_1, \qquad \cg{K_n\backslash \{e\}} = P_3 \, (n \geq 3), \qquad 
\cg{C_n}= C_n, \qquad \cg{P_n} = P_{n},
\]
(where $K_n\backslash \{e\}$ is a complete graph with an edge removed,
and $P_n$ is the path with $n$ vertices).

Looking at the last two examples, a natural question to ask is when is
the compressed cliques graph $\cg{G}$ isomorphic to $G$?

We will make use of the following map $\phi: V(G) \longrightarrow V(\cg{G})$ defined as
\begin{align}\label{phidefined}
\phi(v) =
\begin{cases}
v_{i,j} & \textrm{ if } v \in C_i \cap C_j, \\
v_{i,i} & \textrm{ if $v \in C_i$ and no other cliques.}
\end{cases}
\end{align}

\begin{thm} \label{thm:C(G)isG}
  Let $G$ be a graph with $n$ vertices and $\mathcal{C}=\{C_1,C_2,
  \ldots, C_k \}$ be a min-max clique covering of $G$. If
  $\mathcal{C}$ satisfies simple intersection, then $\mathcal{C}(G)$
  isomorphic to $G$ if and only if all of the sets $C_{i,i}$ and
  $C_{i,j}$ for $i,j \in \{1,\dots, k\}$ contain no more than one
  vertex.
\end{thm}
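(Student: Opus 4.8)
The plan is to establish both directions through the map $\phi$ of \eqref{phidefined} together with a simple vertex count. The first thing I would record is the partition structure forced by simple intersection. Since $\mathcal{C}$ has simple intersection, every vertex of $G$ lies in exactly one or exactly two of the cliques, so the nonempty sets among $\{C_{i,i}\}$ and $\{C_{i,j} : i \neq j\}$ form a partition of $V(G)$: a vertex in the single clique $C_i$ sits in $C_{i,i}$, a vertex in two cliques $C_i,C_j$ sits in $C_{i,j}$, and no vertex lies in three cliques. Each block of this partition is exactly the set of vertices carrying a fixed label, and $\phi$ sends each vertex to the unique compressed vertex indexed by its label. I would also reuse the adjacency description from the proof of Theorem~\ref{thm:unique}: two vertices of $G$ are adjacent if and only if their labels intersect (a one-line check — an edge lies in some $C_i$ forcing a common index, and a common index puts both endpoints in one clique). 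This is precisely the rule defining adjacency in $\cg{G}$.

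For the forward implication, suppose every $C_{i,i}$ and $C_{i,j}$ contains at most one vertex. Then distinct vertices of $G$ lie in distinct blocks, hence carry distinct labels, so $\phi$ is injective; it is surjective because every vertex of $\cg{G}$ arises from a nonempty block, which then holds exactly one vertex of $G$. Because distinct vertices now have distinct labels, no two of them collapse to a single compressed vertex, and the two adjacency descriptions above give $u \sim_G v$ if and only if $\phi(u) \sim \phi(v)$ in $\cg{G}$. Thus $\phi$ is a graph isomorphism.

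For the converse I would avoid exhibiting an explicit isomorphism and argue purely by counting. By construction $\cg{G}$ has one vertex per nonempty block, so $|V(\cg{G})|$ equals the number of nonempty blocks, whereas $|V(G)|$ is the sum of the block sizes; the former is at most the latter, with equality exactly when every nonempty block is a singleton. If $\cg{G} \cong G$, the vertex counts coincide, forcing every block to have at most one vertex, which completes the equivalence. I expect essentially all the substance to lie in the partition-and-adjacency setup of the first paragraph: once that is in place the converse is immediate, and the only point demanding genuine care is the adjacency bookkeeping in the forward direction, namely confirming that the label-intersection criterion governs adjacency in $G$ and that the singleton hypothesis is exactly what prevents two distinct vertices from sharing a label and thereby destroying injectivity of $\phi$.
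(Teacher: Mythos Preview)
Your proposal is correct and follows essentially the same approach as the paper: use $\phi$ as an edge- and non-edge-preserving bijection for the forward direction, and a vertex count (number of nonempty blocks versus sum of block sizes) for the converse. Your write-up is in fact more careful than the paper's in justifying why adjacency in $G$ is governed by label intersection, but the argument is the same.
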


\begin{proof}
For every $v \in V(G)$, there is a unique pair $(i,j) \in
\{1,\dots,n\} \times \{1,\dots,n\}$ such that $v \in C_{i,j}$ (it
is possible that $i=j$).

Consider the map $\phi$ from $V(G)$ to $V(\mathcal{C}(G))$ defined
above. If all of the sets $C_{i,i}$ and $C_{i,j}$ for $i,j \in
\{1,\dots, k\}$ contain no more than one vertex this mapping $\phi$ is
an edge, and non-edge, preserving bijection.  Hence $\mathcal{C}(G)
\cong G$.

Conversely, the graph $\mathcal{C}(G)$ has one vertex for each
non-empty set $C_{i,j}$. These sets cover the vertices in $G$, so if
one of these sets contains two or more vertices, then $\mathcal{C}(G)$
will have less vertices than $G$, and hence these two graphs cannot be
isomorphic.
\end{proof}

\begin{cor} \label{ccG} 
  Suppose $G$ is a graph that possesses a min-max clique covering having
  simple intersection. Then $\mathcal{C}(\mathcal{C}(G))=
  \mathcal{C}(G)$.
\end{cor}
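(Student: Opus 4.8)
If $G$ has a min-max clique covering with simple intersection, then $\mathcal{C}(\mathcal{C}(G)) = \mathcal{C}(G)$.

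The idea is to apply Theorem~\ref{thm:C(G)isG} to the graph $\mathcal{C}(G)$ rather than to $G$ itself. So I need to show two things: first, that $\mathcal{C}(G)$ itself possesses a min-max clique covering satisfying simple intersection (so that $\mathcal{C}(\mathcal{C}(G))$ is even defined), and second, that in this clique covering of $\mathcal{C}(G)$ every intersection set contains at most one vertex. Given those, the theorem immediately yields $\mathcal{C}(\mathcal{C}(G)) \cong \mathcal{C}(G)$, and since the compressed cliques graph of $\mathcal{C}(G)$ is built on the same vertex labels, the isomorphism is in fact the identity.

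Here is how I would organize it. Let $\mathcal{C} = \{C_1, \dots, C_\ell\}$ be the given min-max clique covering of $G$ with simple intersection. I would first exhibit the natural candidate clique covering of $\mathcal{C}(G)$: for each $i \in \{1, \dots, \ell\}$ set
\[
\widehat{C_i} = \{\, v_{i,j} : j \in \{1,\dots,\ell\},\ C_{i,j} \neq \emptyset \,\} \cup \{\, v_{i,i} : C_{i,i} \neq \emptyset \,\}.
\]
In words, $\widehat{C_i}$ collects exactly those vertices of $\mathcal{C}(G)$ whose label contains $i$. By the adjacency rule in the construction (two vertices $v_{i,j}$, $v_{i',j'}$ are adjacent iff $\{i,j\} \cap \{i',j'\} \neq \emptyset$), each $\widehat{C_i}$ is a clique, and the family $\{\widehat{C_i}\}$ covers every edge of $\mathcal{C}(G)$ because any edge $\{v_{i,j}, v_{i',j'}\}$ forces a common index lying in some $\widehat{C_m}$. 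I would then check simple intersection: a vertex $v_{i,j}$ of $\mathcal{C}(G)$ lies in $\widehat{C_m}$ precisely when $m \in \{i,j\}$, so it belongs to at most two of the $\widehat{C_m}$, and hence no three of these cliques share a common vertex. Maximality and the fact that the size equals $\CC(\mathcal{C}(G))$ should follow from the correspondence between this covering and the original covering $\mathcal{C}$ of $G$, which shares the same index set.

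The cleanest step is the final one: in the covering $\{\widehat{C_i}\}$ of $\mathcal{C}(G)$, each pairwise intersection set is $\widehat{C_i} \cap \widehat{C_j} = \{v_{i,j}\}$ (a single vertex when $C_{i,j} \neq \emptyset$, empty otherwise), and each singleton set $\widehat{C_i} \setminus \bigcup_{m \neq i} \widehat{C_m} = \{v_{i,i}\}$, again at most one vertex. Thus every one of the relevant intersection sets for this covering of $\mathcal{C}(G)$ has at most one vertex, so Theorem~\ref{thm:C(G)isG} applies to give $\mathcal{C}(\mathcal{C}(G)) \cong \mathcal{C}(G)$.

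The main obstacle I anticipate is justifying that $\{\widehat{C_i}\}$ is genuinely a \emph{min-max} covering of $\mathcal{C}(G)$ — that is, that its size $\ell$ really equals $\CC(\mathcal{C}(G))$ and that each $\widehat{C_i}$ is maximal — since Theorem~\ref{thm:C(G)isG} is stated for min-max coverings with simple intersection. I would handle maximality by arguing that any vertex adjacent to all of $\widehat{C_i}$ must have a label containing $i$ and therefore already lies in $\widehat{C_i}$; the size optimality should follow because $\mathcal{C}(G)$ inherits the same clique-cover structure as $G$ via the index set, so no covering with fewer than $\ell$ cliques can exist. Some care is also needed to confirm that the construction yielding $\mathcal{C}(\mathcal{C}(G))$ from this particular covering reproduces the labels $v_{i,j}$ verbatim, making the isomorphism the identity map rather than merely an abstract isomorphism.
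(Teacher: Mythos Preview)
Your proposal is correct and follows exactly the approach the paper intends: apply Theorem~\ref{thm:C(G)isG} to $\mathcal{C}(G)$ equipped with the induced covering $\{\widehat{C_i}\}$, observing that every intersection set $\widehat{C_i}\cap\widehat{C_j}$ and every residual set $\widehat{C_i}\setminus\bigcup_{m\neq i}\widehat{C_m}$ contains at most one vertex. The paper gives no explicit proof of this corollary; the min-max and simple-intersection properties of $\{\widehat{C_i}\}$ that you rightly flag as needing justification are in fact only formally established later (in the corollary following Theorem~\ref{ccofCG}), so your sketch is actually more careful about the logical dependencies than the paper itself.
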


The map $\phi$ will also be used in Theorem~\ref{ccofCG}, there we
will need the following fact.

\begin{prop}\label{prop:phiconnects}
  If there is a path from $u$ to $v$ in $G$, then either $\phi(u) =
  \phi(v)$ or there is a path from $\phi(u)$ to $\phi(v)$ in $\cg{G}$.
\end{prop}

\section{Uniqueness of compressed cliques graphs}

We have seen that the circulant graph $\mathrm{circ}(6, \{1, 2\})$ has
two distinct min-max clique covers that both satisfy simple
intersection---see Figure~\ref{fig:circulant}.  Furthermore, if we
expand each vertex of $\mathrm{circ}(6, \{1, 2\})$ to a clique of any
positive size and expand edges correspondingly (join the vertices of
these cliques if the corresponding vertices were adjacent in
$\mathrm{circ}(6, \{1, 2\})$), all of the resulting graphs have two
distinct min-max clique covers that both satisfy simple
intersection. In order to ensure the concept of a compressed cliques
graph well-defined, we need to consider the uniqueness of the
compressed cliques graph up to isomorphism. In this section we will
show that if a graph has multiple min-max clique covers that satisfy
simple intersection, the corresponding compressed cliques graphs are
unique, up to isomorphism.

\begin{lem} \label{lem:neighbor} For a graph $G$, let $\mathcal{C}$ be
  a min-max clique cover of $G$ that satisfies simple
  intersection. For a vertex $v \in V(G)$ and two distinct cliques $C_1,
  C_2 \in \mathcal{C}$, we have $v \in C_1 \cap C_2$ if and only if $N_{G}[v]
  = C_1 \cup C_2$.
\end{lem}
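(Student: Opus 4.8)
The plan is to prove the two implications separately, noting in advance that they rely on different features of the hypothesis: the forward implication uses the simple intersection property, while the reverse implication uses only the maximality of the cliques in the cover.

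For the forward direction I assume $v \in C_1 \cap C_2$ and establish $N_G[v] = C_1 \cup C_2$ by double containment. The inclusion $C_1 \cup C_2 \subseteq N_G[v]$ is immediate: since $C_1$ and $C_2$ are cliques each containing $v$, every vertex of $C_1$ and of $C_2$ is either equal to $v$ or adjacent to $v$. For the reverse inclusion I take a vertex $x \in N_G[v]$ with $x \neq v$; the edge $\{x,v\}$ must lie in some clique $C \in \mathcal{C}$. This is exactly where simple intersection enters: since no vertex of $G$ lies in three distinct cliques of $\mathcal{C}$, and $v$ already lies in the two distinct cliques $C_1$ and $C_2$, the clique $C$ containing $v$ must be $C_1$ or $C_2$. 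Hence $x \in C \subseteq C_1 \cup C_2$, which finishes this direction.

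For the reverse direction I assume $N_G[v] = C_1 \cup C_2$ and aim to show $v \in C_1 \cap C_2$. Since $v \in N_G[v] = C_1 \cup C_2$, the vertex $v$ lies in at least one of the two cliques; because the hypothesis and the desired conclusion are both symmetric in $C_1$ and $C_2$, I may assume $v \in C_1$, and it remains to prove $v \in C_2$. Suppose instead that $v \notin C_2$. Then every $x \in C_2$ satisfies $x \neq v$ and $x \in C_1 \cup C_2 = N_G[v]$, so $x$ is adjacent to $v$. Thus $v$ is adjacent to every vertex of the clique $C_2$, and since $v \notin C_2$ the set $C_2 \cup \{v\}$ is a clique properly containing $C_2$, contradicting the maximality of $C_2$. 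Therefore $v \in C_2$, and together with $v \in C_1$ this gives $v \in C_1 \cap C_2$.

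I expect the reverse direction to be the crux of the argument, and the key idea there is to convert the hypothesis $N_G[v] = C_1 \cup C_2$ into the statement that $v$ is adjacent to all of $C_2$, so that maximality forces $v$ into $C_2$. It is worth recording that this direction uses only that $\mathcal{C}$ is a \emph{min-max} (maximal) clique cover, and not simple intersection; simple intersection is needed solely in the forward direction, where it guarantees that $v$ belongs to no clique of $\mathcal{C}$ other than $C_1$ and $C_2$.
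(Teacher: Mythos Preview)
Your proof is correct and follows essentially the same approach as the paper's own proof: both directions are handled by the same ideas (simple intersection forces every edge at $v$ into $C_1$ or $C_2$ for the forward direction; maximality of $C_2$ is contradicted by $C_2\cup\{v\}$ for the reverse). Your version is slightly more explicit in invoking the clique-cover property to place the edge $\{x,v\}$ into some $C\in\mathcal{C}$, and your closing remark about which hypotheses each direction actually uses is a correct and worthwhile observation.
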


\begin{proof} 
  Suppose $v \in C_1 \cap C_2$. Since $\mathcal{C}$ satisfies simple
  intersection, all neighbours of $v$ must be in $C_1$ or $C_2$. Thus
  $N_G[v] \subseteq C_1 \cup C_2$. For any vertex $u$ that is in $C_1$
  or $C_2$, since $v \in C_1\cap C_2$, we know that
  there is an edge between $v$ and $u$.

  On the other hand, suppose $N_G[v] = C_1 \cup C_2$. Then $v \in C_1 \cup
  C_2$. Suppose $v \in C_1 \setminus C_2$. Since $v$ is adjacent to all
  vertices of $C_2$, we know $\{v\} \cup C_2$ is also a clique, which
  contradicts the maximality of $C_2$. Hence $v \in C_1 \cap C_2$.
\end{proof}

\begin{lem} \label{lem:distinct} For a graph $G$, let $\mathcal{A}$
  and $\mathcal{B}$ be two distinct min-max clique covers of $G$ that
  both satisfy simple intersection.  Then for any vertex $v \in V(G)$,
  exactly one of the following two possibilities occurs.
\begin{enumerate}
\item If only one clique $A$ in $\mathcal{A}$ contains the
  vertex $v$, then there is a clique $B \in \mathcal{B}$ such that
  $A=B$.
\item If two distinct cliques $A, A' \in \mathcal{A}$ contain the
vertex $v$, then there are two distinct cliques $B, B' \in
\mathcal{B}$ such that $v \in B \cap B'$.
\end{enumerate}
\end{lem}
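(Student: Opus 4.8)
The plan is to use Lemma~\ref{lem:neighbor} as the main engine, translating each case about membership in cliques into a statement about the closed neighbourhood $N_G[v]$, which is an invariant of the vertex $v$ and does not depend on which min-max clique cover we use. First I would dispose of the dichotomy itself: since $\mathcal{A}$ has simple intersection, every vertex lies in either exactly one clique of $\mathcal{A}$ or exactly two, so precisely one of the two hypotheses holds, and it remains to show the matching conclusion holds in $\mathcal{B}$.

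For the first case, suppose $v$ lies in exactly one clique $A \in \mathcal{A}$. I would argue that $v$ must then lie in exactly one clique of $\mathcal{B}$ as well: if $v$ were in two distinct cliques $B, B'$ of $\mathcal{B}$, then Lemma~\ref{lem:neighbor} would give $N_G[v] = B \cup B'$ with $B, B'$ distinct maximal cliques, whereas lying in a single clique $A$ of $\mathcal{A}$ forces every neighbour of $v$ to be in $A$ (by simple intersection), so $N_G[v] = A$ is a single clique. A single maximal clique cannot equal a union of two distinct maximal cliques (the union would properly contain each, contradicting maximality unless they coincide), so this is impossible. Hence $v$ is in a unique clique $B \in \mathcal{B}$, and then both $A$ and $B$ equal the unique maximal clique that is the closed neighbourhood $N_G[v]$ — more carefully, $A$ is a maximal clique contained in $N_G[v]=A$, and $B$ is a maximal clique with $N_G[v]=B$, so $A=B$.

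For the second case, suppose $v$ lies in two distinct cliques $A, A' \in \mathcal{A}$. By Lemma~\ref{lem:neighbor}, $N_G[v] = A \cup A'$, which is a union of two distinct maximal cliques and is therefore \emph{not} itself a clique (otherwise it would be a clique properly containing the maximal clique $A$). Now consider the cliques of $\mathcal{B}$ containing $v$. By the first-case argument run in the other direction, if $v$ were in a unique clique $B \in \mathcal{B}$ then $N_G[v]$ would be the single clique $B$, contradicting that $N_G[v]$ is not a clique. Hence $v$ lies in two distinct cliques $B, B' \in \mathcal{B}$, which is exactly the desired conclusion.

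The main obstacle, and the step I would be most careful with, is the assertion that a union of two distinct maximal cliques is never itself a clique; this is where maximality is essential and is the crux of why the two cases are genuinely exclusive and symmetric. Once that observation is pinned down, the whole argument becomes a clean application of Lemma~\ref{lem:neighbor} in both directions between $\mathcal{A}$ and $\mathcal{B}$, exploiting that $N_G[v]$ is cover-independent. A minor bookkeeping point to handle is confirming that ``exactly one'' versus ``exactly two'' truly exhausts the possibilities under simple intersection, so that establishing one conclusion automatically rules out the other and the ``exactly one of the following'' phrasing is justified.
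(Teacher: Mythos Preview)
Your proof is correct. The approach differs slightly from the paper's in its organizing principle: you route everything through the invariant $N_G[v]$ and Lemma~\ref{lem:neighbor}, arguing that $N_G[v]$ is a single clique precisely when $v$ lies in one clique of the cover, and a non-clique (a union of two distinct maximal cliques) precisely when $v$ lies in two. The paper's proof does not invoke Lemma~\ref{lem:neighbor} here at all; in Case~1 it picks a clique $B \in \mathcal{B}$ containing $v$, finds a vertex $w \in B \setminus A$ by maximality if $A \neq B$, and observes that the edge $\{v,w\}$ would force $v$ into a second clique of $\mathcal{A}$; in Case~2 it argues directly that if $v$ lay in a single $B \in \mathcal{B}$ then $A$ would be properly contained in $B$, violating maximality. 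Your version is a bit more conceptual and symmetric (the same ``$N_G[v]$ is or is not a clique'' dichotomy handles both cases), while the paper's is slightly more hands-on but avoids the dependence on Lemma~\ref{lem:neighbor}. Both are equally short and the key observation you flag---that a union of two distinct maximal cliques cannot itself be a clique---is exactly what drives the paper's ``$A \subsetneq B$'' step in Case~2.
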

\begin{proof} Let $v$ be a vertex of $G$. Since $\mathcal{A}$ is a
  min-max clique cover of $G$ that satisfies simple intersection, we
  have the following two cases.
\begin{enumerate}
  \item  The vertex $v$ is contained in exactly one clique $A$ of
  $\mathcal{A}$. There is a clique $B \in \mathcal{B}$ such that $v
  \in B$. Suppose that $A \neq B$. Since $A$ and $B$ are maximal
  cliques, there is a vertex $w \in B \setminus A$. Since $B$ is a
  clique, $\{v, w\}$ must be an edge in $G$. Since $w \not\in A$, the
  edge $\{v, w\}$ must be covered by a clique in $\mathcal{A}
  \setminus \{A\}$. Thus, $v$ is in two cliques of $\mathcal{A}$,
  which is a contradiction. Hence $A = B$.

  \item The vertex $v$ is contained in exactly two distinct cliques $A$
  and $A'$ of $\mathcal{A}$.  If $v$ is contained in exactly one
  clique $B$ of $\mathcal{B}$, then $A$ is a proper subset of $B$,
  which contradicts the maximality of $A$. So $v$ is contained in
  exactly two cliques $B$ and $B'$ of $\mathcal{B}$, i.e., $v \in B
  \cap B'$.
\end{enumerate}
\end{proof}

\begin{lem} \label{lem:intersection} 
  For a graph $G$, let $\mathcal{A}$ and $\mathcal{B}$ be two distinct
  min-max clique covers of $G$ that both satisfy simple
  intersection. If there are two distinct cliques $A$ and $A'$ in
  $\mathcal{A}$ and two distinct cliques $B$ and $B'$ in $\mathcal{B}$
  such that $A \cap A' \cap B \cap B' \neq \emptyset$, then $A \cap
  A' = B \cap B'$.
\end{lem}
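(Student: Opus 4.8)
The plan is to reduce the entire statement to a single application of Lemma~\ref{lem:neighbor}, which characterizes membership in the intersection of two cliques of a simply intersecting min-max cover purely in terms of closed neighbourhoods. The key observation is that each of $A\cap A'$ and $B\cap B'$ can be described \emph{without reference to any particular vertex} once we know the union $A\cup A'$ (respectively $B\cup B'$), together with the fact that these two unions coincide. So the strategy is: first show the two unions are equal, then use that equality to force the two intersections to be equal.

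First I would fix a vertex $v \in A\cap A'\cap B\cap B'$, which exists by hypothesis. Since $v$ lies in the two distinct cliques $A,A'$ of $\mathcal{A}$, Lemma~\ref{lem:neighbor} gives $N_G[v]=A\cup A'$; applying the same lemma to the two distinct cliques $B,B'$ of $\mathcal{B}$ gives $N_G[v]=B\cup B'$. Comparing these two expressions for the closed neighbourhood of the common vertex $v$ yields the crucial identity $A\cup A'=B\cup B'$.

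Next I would promote this equality of unions to an equality of intersections. The biconditional in Lemma~\ref{lem:neighbor} says that for \emph{any} vertex $u\in V(G)$ one has $u\in A\cap A'$ if and only if $N_G[u]=A\cup A'$, and likewise $u\in B\cap B'$ if and only if $N_G[u]=B\cup B'$. Since $A\cup A'=B\cup B'$, these two neighbourhood conditions are literally the same condition, so the sets of vertices satisfying them agree; that is, $A\cap A'=B\cap B'$, as required.

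On obstacles: the naive route is a direct containment argument, taking $u\in A\cap A'$ and trying to locate the two cliques of $\mathcal{B}$ containing it. That approach runs into trouble because Lemma~\ref{lem:distinct} only guarantees $u$ lies in \emph{some} pair of $\mathcal{B}$-cliques, and one would then have to argue that this pair is exactly $\{B,B'\}$, which requires a separate uniqueness fact asserting that two maximal cliques sharing a vertex are determined by their union. The closed-neighbourhood reformulation sidesteps this entirely, making the relevant intersecting pair automatic. The only point requiring care is to invoke the \emph{full} biconditional form of Lemma~\ref{lem:neighbor}, in both directions, rather than just the forward implication.
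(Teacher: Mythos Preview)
Your proof is correct and follows essentially the same approach as the paper: both arguments use Lemma~\ref{lem:neighbor} applied to the common vertex $v$ to obtain $A\cup A'=B\cup B'$, and then invoke the biconditional direction of that same lemma to pass to the intersections. Your direct formulation is in fact slightly cleaner than the paper's, which proceeds by contradiction and also separates out a (not entirely necessary) preliminary case where $\{A,A'\}$ and $\{B,B'\}$ share a clique.
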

\begin{proof}
  If one of $A$ or $A'$ is equal to one of $B$ or $B'$, then it is easy to
  see that $\{A, A'\} = \{B, B'\}$ and that the result is trivial. Thus we
  only consider the case that $\{A, A'\} \cap \{B, B'\} = \emptyset$.
  Suppose $A \cap A' \neq B \cap B'$. Without loss of generality, let
  $x \in (A \cap A') \setminus (B \cap B')$.  Let $v \in A \cap A'
  \cap B \cap B' $. It follows from Lemma~\ref{lem:neighbor} that $A
  \cup A' = N[v] = B \cup B'$. So $x$ must be in $B$ or $B'$ but
  cannot be in both.

  Since $x \in A \cap A'$, from Lemma~\ref{lem:neighbor} we know that
  $N_G [x] = A \cup A'$. Thus $N_G [x] = B \cup B'$. Furthermore, it follows
  from Lemma~\ref{lem:neighbor} that $x \in B \cap B'$. This is a
  contradiction. Therefore $A \cap A' = B \cap B'$.
\end{proof}

Now we can verify that the compressed cliques graph for a graph with a
min-max clique covering with simple intersection is well-defined.

\begin{thm} \label{thm-unique}
  If a graph $G$ has a min-max clique cover that satisfies simple
  intersection, then the compressed cliques graph of $G$ is unique.
\end{thm}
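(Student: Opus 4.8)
The plan is to prove something slightly stronger than uniqueness up to isomorphism: if $\mathcal{A}=\{A_1,\dots,A_k\}$ and $\mathcal{B}=\{B_1,\dots,B_k\}$ are two min-max clique covers of $G$ that both satisfy simple intersection, then the two compressed cliques graphs they produce are literally the same graph. The starting observation is that, because $\mathcal{A}$ has simple intersection, every vertex of $G$ lies in exactly one or exactly two cliques of $\mathcal{A}$ (treating an isolated vertex as its own singleton clique $K_1$). Hence the nonempty sets among the $A_{i,j}=A_i\cap A_j$ and $A_{i,i}=A_i\setminus\bigcup_{j\ne i}A_j$ form a \emph{partition} $\mathcal{P}_{\mathcal{A}}$ of $V(G)$, and by construction the vertices of the compressed cliques graph built from $\mathcal{A}$ are exactly the cells of $\mathcal{P}_{\mathcal{A}}$; the same holds for $\mathcal{B}$. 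I would therefore reduce the theorem to two claims: first, that $\mathcal{P}_{\mathcal{A}}=\mathcal{P}_{\mathcal{B}}$ as partitions of $V(G)$; and second, that adjacency between two distinct cells in the compressed graph is an intrinsic property of $G$ and the partition, not of the chosen labelling.

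For the first claim I would fix a vertex $v$ and compare its cell in $\mathcal{P}_{\mathcal{A}}$ with its cell in $\mathcal{P}_{\mathcal{B}}$, using Lemma~\ref{lem:distinct} to split into two cases. If $v$ lies in two cliques $A,A'\in\mathcal{A}$, then Lemma~\ref{lem:distinct} gives two cliques $B,B'\in\mathcal{B}$ with $v\in B\cap B'$; since $v\in A\cap A'\cap B\cap B'\ne\emptyset$, Lemma~\ref{lem:intersection} yields $A\cap A'=B\cap B'$, and as both $A\cap A'$ and $B\cap B'$ are single cells (no vertex meets a third clique, by simple intersection), the cell of $v$ is the same set in both partitions. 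If instead $v$ lies in exactly one clique $A\in\mathcal{A}$, Lemma~\ref{lem:distinct} gives $B\in\mathcal{B}$ with $A=B$; I would first verify, via the contrapositive of the two-clique case applied with the roles of $\mathcal{A}$ and $\mathcal{B}$ reversed, that $v$ then lies in exactly one clique of $\mathcal{B}$, namely $B=A$. The cell of $v$ in $\mathcal{P}_{\mathcal{A}}$ is the set of vertices whose unique $\mathcal{A}$-clique is $A$, and the same correspondence shows a vertex has unique $\mathcal{A}$-clique $A$ if and only if it has unique $\mathcal{B}$-clique $B=A$; hence again the two cells coincide. Since every $v$ has the same cell in both partitions, $\mathcal{P}_{\mathcal{A}}=\mathcal{P}_{\mathcal{B}}$.

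For the second claim I would show that two distinct cells $P,Q$ are adjacent in the compressed graph built from $\mathcal{A}$ if and only if some vertex of $P$ is adjacent in $G$ to some vertex of $Q$. Indeed, if the index sets of $P$ and $Q$ share a common index $m$, then $P\cup Q\subseteq A_m$ is a clique, so any $p\in P$ and $q\in Q$ are adjacent; conversely, if $p\in P$ and $q\in Q$ are adjacent in $G$, the clique $A_m$ covering that edge contains both, and since $p$ and $q$ each lie only in the cliques recorded by their index sets, $m$ is a common index. This characterization refers only to $G$ and the partition, both of which agree for $\mathcal{A}$ and $\mathcal{B}$. Identifying each compressed-graph vertex $v_{i,j}$ with the cell $A_{i,j}\subseteq V(G)$ it represents, the two compressed cliques graphs then have the same vertex set and the same edge set, so they are equal, and in particular isomorphic. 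The main obstacle is the single-clique case of the first claim: Lemma~\ref{lem:distinct} asserts only the existence of an equal clique in the other cover, so the delicate point is to promote this to the statement that the \emph{entire} cell $A_{i,i}$ is preserved, which is exactly where reversing the roles of $\mathcal{A}$ and $\mathcal{B}$ is essential.
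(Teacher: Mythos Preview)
Your proposal is correct and follows essentially the same route as the paper: both arguments use Lemma~\ref{lem:distinct} and Lemma~\ref{lem:intersection} to show that the nonempty cells produced by $\mathcal{A}$ and by $\mathcal{B}$ coincide as subsets of $V(G)$, and then verify that the edge sets agree by observing that adjacency of two cells is witnessed by an edge of $G$ lying in a common clique. Your partition framing and intrinsic adjacency characterization are a slightly cleaner packaging of the same ideas, and your handling of the single-clique case is in fact more explicit than the paper's.
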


\begin{proof}
  If $G$ has a unique min-max clique cover satisfying simple
  intersection, it is easy to see that the compressed cliques graph of
  $G$ is unique. Suppose that $\mathcal{A} = \{A_1,A_2, \dots,
  A_{\ell}\}$ and $\mathcal{B} = \{B_1,B_2, \dots, B_{\ell}\}$ are two
  distinct min-max clique covers of $G$ that satisfy simple
  intersection.

For distinct $i, j \in \{1, \dots, \ell\}$ define the sets
\[
A_{i,j} = A_i \cap A_j, \quad  A_{i,i} = A_i \setminus 
\bigcup_{\stackrel{j \in \{1,\dots, \ell\}}{j \neq i}}A_j ,
\]
\[
B_{i,j} = B_i \cap B_j, \quad  B_{i,i} = B_i \setminus 
\bigcup_{\stackrel{j \in \{1,\dots, \ell\}}{j \neq i}}B_j .
\]

Let $V_A$ be the set of all non-empty sets $A_{i,j}$ with $i, j \in \{1,
\dots, \ell\}$, and $V_B$ be the set of all non-empty sets $B_{i,j}$ with
$i, j \in \{1, \dots, \ell\}$.  For any $A_{i,i} \in V_A$, let $u$ be
a vertex in $A_{i,i}$. From the definition, $A_{i,i}$ is the only
clique in $\mathcal{A}$ that contains $u$. Thus it follows from
Lemma~\ref{lem:distinct}(1) that $A_{i,i} \in V_B$.  For any $A_{i,j}
\in V_A$ ($i \neq j$), let $v$ be a vertex in $A_{i,j}$. From the
definition, $A_i$ and $A_j$ are two distinct cliques in $\mathcal{A}$
that contain $v$. Thus it follows from Lemma~\ref{lem:distinct}(2)
that there are two distinct cliques $B_{i'}, B_{j'} \in \mathcal{B}$
such that $v \in B_{i'} \cap B_{j'}$. Using
Lemma~\ref{lem:intersection} we now have $A_{i} \cap A_{j} = B_{i'}
\cap B_{j'}$. Hence $A_{i,j} = B_{i',j'} \in V_B$. Therefore, $V_A
\subseteq V_B$. Similarly, we can show that $V_B \subseteq V_A$. So we
have $V_A=V_B$.

Let $E_A = \{\{A_{i,j}, A_{j,k}\}: A_i \cap A_j \not= \emptyset, A_j \cap A_k \not= \emptyset\}$ 
in which $i, j, k$ are distinct indices, and let
$E_B = \{\{B_{i,j}, B_{j,k}\}: B_i \cap B_j \not= \emptyset, B_j \cap B_k \not= \emptyset\}$ in
which $i, j, k$ are distinct indices. For any $\{A_{i,j}, A_{j,k}\} \in E_A$, there is an edge 
$\{u, v\} \in E(G)$ such that $u \in A_{i,j} = B_{i',j'}$, and $v \in A_{j,k} = B_{j'',k'}$. 
Since the edge $\{u, v\}$ is covered by a clique in ${\cal B}$, from the simple intersection 
property for the clique cover $\mathcal{B}$, we know that one of $\{B_{i'}, B_{j'}\}$ is 
the same as one of $\{B_{j''}, B_{k'}\}$. 
Thus $\{B_{i',j'}, B_{j'',k'}\} \in E_B$. So $E_A \subseteq E_B$. Similarly, we can show 
that $E_B \subseteq E_A$, and hence $E_A = E_B$. 

From the definition of the compressed cliques graph, the compressed
cliques graph of $G$ with respect to $\mathcal{A}$ is isomorphic to the
compressed cliques graph of $G$ with respect to $\mathcal{B}$.
Therefore, the compressed cliques graph of $G$ is unique.
\end{proof}

\begin{cor}
Suppose $G$ has two distinct min-max clique covers that both satisfy simple intersection.
Then $\mathcal{C}(G)=\mathrm{circ}(6, \{1, 2\})$.
\label{2covH}
\end{cor}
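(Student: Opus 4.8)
The plan is to build on Theorem~\ref{thm:unique} and then pin down exactly which vertices and cliques can occur, so that the six classes it produces turn out to be everything. Fix two distinct min-max clique covers $\mathcal{A}=\{A_1,\dots,A_\ell\}$ and $\mathcal{B}$ of $G$, both with simple intersection, and relabel so that $A_1\notin\mathcal{B}$. Running the argument in the proof of Theorem~\ref{thm:unique} with $\mathcal{A}$ in the role of $\mathcal{C}$ and $\mathcal{B}$ in the role of $\mathcal{D}$ produces four cliques $A_1,A_2,A_3,A_4$ for which all six intersection classes $A_{i,j}=A_i\cap A_j$ with $1\le i<j\le 4$ are non-empty, together with a precise record of how these classes sit inside $\mathcal{B}$: the class labelled $\{1,2\}$ lies in exactly two $\mathcal{B}$-cliques, one carrying labels $\{1,2\},\{1,3\},\{2,3\}$ and one carrying $\{1,2\},\{1,4\},\{2,4\}$, and similarly for the classes $\{1,3\}$ and $\{1,4\}$. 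These six classes already induce $\mathrm{circ}(6,\{1,2\})$ in $\cg{G}$, so the whole content of the corollary is to show they are the \emph{only} vertices of $\cg{G}$: that every set $A_{i,i}$ is empty, that every $A_{i,m}$ with $m\ge 5$ is empty, and that $\mathcal{A}$ has no clique beyond $A_1,\dots,A_4$.

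I would first dispose of the ``pendant'' classes $A_{i,i}$. If $A_{1,1}\ne\emptyset$ then the reasoning already used inside Theorem~\ref{thm:unique} forces $A_1\in\mathcal{B}$, contradicting our choice. For $i\in\{2,3,4\}$, if $A_{i,i}\ne\emptyset$ then Lemma~\ref{lem:distinct}(1) gives $A_i\in\mathcal{B}$; but $A_i$ contains the class $A_{1,i}$, whose vertices already lie in two cliques of $\mathcal{B}$, and a short check comparing labels shows $A_i$ coincides with neither of those two, so an $A_{1,i}$-vertex would lie in three distinct $\mathcal{B}$-cliques, violating simple intersection of $\mathcal{B}$. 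Hence all four $A_{i,i}$ are empty.

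Next I would rule out classes $A_{i,m}$ with $i\in\{1,2,3,4\}$ and $m\ge 5$. Suppose $w\in A_{i,m}$; by Lemma~\ref{lem:neighbor} we have $N_G[w]=A_i\cup A_m$, so $w$ is adjacent only to vertices whose label meets $\{i,m\}$. Now $w$ is adjacent to the $A_{1,i}$-class (the $A_{1,2}$-class when $i=1$), which lies in exactly two $\mathcal{B}$-cliques, so the edge from $w$ must fall in one of them; that forces $w$ into a $\mathcal{B}$-clique also containing one of the six classes whose label is disjoint from $\{i,m\}$, contradicting $N_G[w]=A_i\cup A_m$. Thus no such $w$ exists, and $A_1,\dots,A_4$ are exactly the cliques on the six classes with no further interior vertices. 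Finally, any additional clique $A_5,A_6,\dots$ would consist of vertices whose labels use an index $\ge 5$; such a vertex cannot be adjacent to any of the six classes, since an edge joining it to, say, the $A_{1,2}$-class would have to lie in $A_1$ or $A_2$ by simple intersection of $\mathcal{A}$, neither of which contains it. Hence the six classes form a union of connected components, and for connected $G$ they exhaust $G$; projecting to $\cg{G}$ leaves exactly the six vertices $v_{i,j}$, adjacent by intersection of the $2$-subsets of $\{1,2,3,4\}$, which is precisely $\mathrm{circ}(6,\{1,2\})$.

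The main obstacle I anticipate is the bookkeeping in the two middle steps: one must track, for each of the six intersection classes, exactly which two $\mathcal{B}$-cliques contain it and which labels those cliques carry, and then repeatedly invoke that an intruding vertex is driven into a $\mathcal{B}$-clique alongside a class to which it is non-adjacent. This is where simple intersection of $\mathcal{B}$ and Lemma~\ref{lem:neighbor} do all the work, while Lemma~\ref{lem:distinct} handles the pendant classes. A secondary point worth flagging is connectivity: for a disconnected $G$ one only obtains that $\cg{G}$ equals $\mathrm{circ}(6,\{1,2\})$ together with the compressed graph of the remaining components, so the clean identity $\cg{G}=\mathrm{circ}(6,\{1,2\})$ should be read with $G$ connected (or restricted to the component carrying the non-uniqueness).
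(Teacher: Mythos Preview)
Your argument is correct, and it is more explicit than the paper's. The two proofs are genuinely different in where they do the work.

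The paper argues entirely inside $\cg{G}$. It first notes (via the six non-empty classes produced in the proof of Theorem~\ref{thm:unique}) that $\cg{G}$ contains an induced copy of $\mathrm{circ}(6,\{1,2\})$, and then supposes there is an additional vertex $v$ of $\cg{G}$ adjacent to, say, $v_{1,2}$. Using the hypothesis that $G$ admits two distinct min-max simple-intersection covers, it forces $v$ to be adjacent to every vertex whose label meets $\{1,2\}$; hence $N_{\cg{G}}[v]=N_{\cg{G}}[v_{1,2}]$. But $\cg{G}$ is self-compressed (Corollary~\ref{ccG}), so two vertices with identical closed neighbourhoods cannot both survive in $\cg{G}$, a contradiction. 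This is short because the ``no duplicate neighbourhoods'' property of $\cg{G}$ absorbs all the case analysis.

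Your proof instead stays in $G$ and dismantles the possible surplus classes directly: you kill the pendant classes $A_{i,i}$ via Lemma~\ref{lem:distinct}, kill the classes $A_{i,m}$ with $m\ge 5$ via Lemma~\ref{lem:neighbor} together with the explicit $\mathcal{B}$-clique bookkeeping extracted from the proof of Theorem~\ref{thm:unique}, and then observe that any remaining $\mathcal{A}$-cliques are disconnected from $A_1\cup\cdots\cup A_4$. This is longer but completely transparent, and it is essentially the alternative route the authors allude to in the paragraph immediately following the corollary, where they remark that one could deduce Corollary~\ref{2covH} ``directly'' from the structures in the proof of Theorem~\ref{thm-unique}. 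What you gain is that every step is a concrete label check; what the paper's route gains is brevity, by invoking the self-compressed property of $\cg{G}$ once instead of running four separate exclusion arguments.

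Your closing caveat about connectivity is well taken: the paper tacitly treats $G$ as connected, and your formulation (restrict to the component carrying the non-uniqueness) is the honest way to state it.
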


\begin{proof}
  Since $G$ possesses two min-max clique covers that both satisfy
  simple intersection, we know that $\mathcal{C}(G)$ contains
  $\mathrm{circ}(6, \{1, 2\})$ as an induced subgraph (Theorem
  \ref{thm:unique}).  Assume, for the sake of a contradiction, that
  $\mathcal{C}(G)\neq \mathrm{circ}(6, \{1, 2\})$.  Then there exists
  a vertex $v$ in $\mathcal{C}(G)$ adjacent to some vertex in the
  subgraph $\mathrm{circ}(6, \{1, 2\})$.  Referring to Figure
  \ref{fig:circulant}, suppose $v$ is adjacent to the vertex labeled
  $\{1,2\}$.  (Since $\mathrm{circ}(6, \{1, 2\})$ is
  vertex-transitive, the label $\{1,2\}$ is chosen without loss of
  generality.)  From the hypothesis on $G$ we know that $v$ must also
  be adjacent to any vertex whose labels contain a 1 or a 2;
  otherwise $G$ does not possess two such clique covers. In this case
  $\mathcal{C}(G)$ is not a compressed cliques graph as the vertex $v$
  should also have the label $\{1,2\}$ and hence did not exist in the
  first place. By contradiction we conclude that
  $\mathcal{C}(G)=\mathrm{circ}(6, \{1, 2\})$.
\end{proof}

Let $\mathcal{G}$ be the set of all graphs that have a min-max clique
cover satisfying simple intersection. Let $\mathcal{R}$ be a binary
relation on $\mathcal{G}$. We say two graphs $G_1, G_2 \in
\mathcal{G}$ have the relation $\mathcal{R}$ if $\mathcal{C}(G_1)=
\mathcal{C}(G_2)$. It is easy to see that $\mathcal{R}$ is an
equivalence relation, and hence induces a partition of the set $\mathcal{G}$. For any equivalence class
that contains a graph $H$, it is easy to see that the graph
$\mathcal{C}(H)$ is the minimum element in the class. Thus, the
compressed cliques graph of $\mathcal{C}(H)$ is $\mathcal{C}(H)$
itself, see also Corollary \ref{ccG}.

Before we close this section, we note that we could have used the structures  derived in the proof of Theorem \ref{thm-unique}
to deduce Corollary \ref{2covH} directly, without relying on Theorem \ref{thm:unique}. 
Since we chose to consider non-unique min-max clique covers first, we used 
Theorem \ref{thm:unique} to establish Corollary \ref{2covH}. Observe that if the compressed cliques graph of $G$  is $H$, then 
$H$ must be an induced subgraph of $G$. Hence if $G$ contains two distinct min-max clique covers that both satisfy simple intersection, we know from Corollary \ref{2covH} that $\mathcal{C}(G)=\mathrm{circ}(6, \{1, 2\})$, and hence $G$ contains $\mathrm{circ}(6, \{1, 2\})$ as an induced 
subgraph, which implies Theorem \ref{thm:unique}.

\section{Clique coverings of compressed cliques graphs}

Before we begin our analysis on positive zero forcing in compressed
cliques graphs, we consider clique coverings of compressed cliques
graphs.  We begin with the following useful lemma.  Let $\phi: V(G)
\rightarrow V(\cg{G})$ be the map defined in (\ref{phidefined}).

\begin{lem}
Let $G$ be a graph with a min-max clique covering $\mathcal{C}$ that has simple intersection.
  If $C$ is a clique in $\cg{G}$, then the preimage of $C$ under
  $\phi$ is a clique in $G$.
\end{lem}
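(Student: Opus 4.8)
The plan is to unwind the definitions and reduce everything to one observation: two vertices of $G$ whose images under $\phi$ are equal or adjacent in $\cg{G}$ must share a common clique $C_i$ of $\mathcal{C}$, and are therefore adjacent in $G$. First I would make the preimage explicit. Since each vertex $v_{i,j}$ of $\cg{G}$ is the contraction of the non-empty set $C_{i,j}$, we have $\phi^{-1}(v_{i,j}) = C_{i,j}$, and hence $\phi^{-1}(C) = \bigcup_{v_{i,j}\in C} C_{i,j}$. To prove this set induces a complete subgraph of $G$, it suffices to show that any two distinct vertices $x, y \in \phi^{-1}(C)$ are adjacent in $G$.

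Next I would carry out a short case analysis. Write $x \in C_{i,j}$ and $y \in C_{i',j'}$, where $v_{i,j}, v_{i',j'} \in C$ (allowing $i=j$ or $i'=j'$ to accommodate the diagonal vertices). If $v_{i,j} = v_{i',j'}$, then $x$ and $y$ both lie in $C_i$, because $C_{i,j} \subseteq C_i$ in every case, and so they are adjacent. Otherwise $v_{i,j}$ and $v_{i',j'}$ are two distinct vertices of the clique $C$, hence adjacent in $\cg{G}$; by the adjacency rule for the compressed cliques graph this means $\{i,j\} \cap \{i',j'\} \neq \emptyset$. Selecting a common index, say $i = i'$, we again get $x, y \in C_i$, so $x$ and $y$ are adjacent. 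In all cases $x$ and $y$ lie in a common clique of $\mathcal{C}$, whence $\{x,y\} \in E(G)$, and therefore $\phi^{-1}(C)$ is a clique of $G$.

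The only point that needs care — and the main, though mild, obstacle — is treating the diagonal vertices $v_{i,i}$ on the same footing as the off-diagonal ones, since their label is the singleton $\{i\}$ rather than a pair. The inclusion $C_{i,i} \subseteq C_i$ together with the convention that $v_{i,i}$ carries the label $\{i\}$ makes the adjacency rule ``non-empty intersection of index sets'' produce a common clique in precisely the same manner, so no separate treatment is actually required. I would note that the hypothesis of simple intersection enters only through the well-definedness of $\phi$, namely that each vertex of $G$ lies in at most two cliques of $\mathcal{C}$, which is already guaranteed; the core combinatorial argument uses nothing beyond the adjacency rule of $\cg{G}$ and the elementary fact that a shared index yields a shared clique.
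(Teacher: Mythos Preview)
Your proof is correct, and in fact slightly cleaner than the paper's. The paper argues by first classifying the cliques of $\cg{G}$ into two types: those whose labels all share a common index $i$ (so the preimage sits inside $C_i$), and those whose labels do not, which it observes must then be a triangle $\{\{a,b\},\{a,c\},\{b,c\}\}$ (so any two preimage vertices, each lying in two of $C_a,C_b,C_c$, must share one of them). Your pairwise argument bypasses this structural classification entirely: since any two vertices of $C$ are adjacent in $\cg{G}$, their index sets intersect, and that common index immediately furnishes the common clique. The gain is that you never need the extra fact about the shape of ``bad'' cliques in $\cg{G}$; the cost is nil, since that fact is not used elsewhere in the argument.
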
 \label{func}
\begin{proof}
  Assume that $\mathcal{C}=\{C_1, \dots , C_\ell\}$ is a min-max clique
  covering of $G$ that has simple intersection.

  There are two types of cliques in $\cg{G}$. The first, are the
  cliques that are comprised of vertices that correspond to the sets
  that all contain a common element, say $i$. Clearly the preimage of
  all such vertices is contained in the clique $C_i$.

  The second type are the cliques in which the sets do not
  all contain a fixed element. These cliques must contain exactly three
  vertices of the form $\{ \{a,b\}, \{a,c\}, \{b,c\}\}$.  Let $v$ and
  $w$ be any two vertices in the preimage of these three
  vertices. Then $v$ and $w$ must each be in at least two of the
  cliques $C_a, C_b, C_c$. Thus $v$ and $w$ are both in at least one
  of these cliques so they are adjacent.
\end{proof}

\begin{thm} \label{ccofCG}
  Let $G$ be a graph in which there is a min-max clique
  covering with simple intersection. Let $\cg{G}$ be the compressed
  cliques graph of $G$.  Then
\[
\CC(G) = \CC(\cg{G}).
\]
\end{thm}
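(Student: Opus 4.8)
The plan is to establish the two inequalities $\CC(\cg{G}) \le \CC(G)$ and $\CC(G) \le \CC(\cg{G})$ separately, in each case transporting a clique cover across the map $\phi$ defined in (\ref{phidefined}).

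For $\CC(\cg{G}) \le \CC(G)$, I would start from the min-max cover $\mathcal{C} = \{C_1, \dots, C_\ell\}$ with $\ell = \CC(G)$ and consider the images $\phi(C_1), \dots, \phi(C_\ell)$. Each $\phi(C_i)$ consists of vertices $v_{i,j}$ (and possibly $v_{i,i}$) whose labels all contain $i$, so any two of them are adjacent in $\cg{G}$ and $\phi(C_i)$ is a clique. To see these cover every edge of $\cg{G}$, observe that an edge joins $v_{a,b}$ to $v_{c,d}$ exactly when $\{a,b\} \cap \{c,d\} \neq \emptyset$; choosing a common index $m$ places both endpoints in $\phi(C_m)$. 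Discarding any empty images leaves a clique cover of size at most $\ell$, yielding the first inequality.

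For the reverse inequality, I would take a minimal clique cover $\mathcal{D} = \{D_1, \dots, D_m\}$ of $\cg{G}$ with $m = \CC(\cg{G})$. By Lemma~\ref{func}, each preimage $\phi^{-1}(D_t)$ is a clique in $G$, and since $\phi$ is surjective onto $V(\cg{G})$ these $m$ sets are distinct. The crux is verifying that they cover every edge $\{u,w\}$ of $G$. If $\phi(u) \neq \phi(w)$, then $\{u,w\}$ lies in some $C_m \in \mathcal{C}$, forcing the labels of $u$ and $w$ to share the index $m$; hence $\phi(u)\phi(w)$ is an edge of $\cg{G}$, covered by some $D_t$, and then $u,w \in \phi^{-1}(D_t)$. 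This gives a cover of $G$ of size at most $m$, so $\CC(G) \le \CC(\cg{G})$.

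The main obstacle is the case $\phi(u) = \phi(w)$, that is, an edge both of whose endpoints collapse to a single vertex $x$ of $\cg{G}$; such an edge is covered only if $x$ itself lies in some $D_t$. I expect to resolve this by showing that $x$ is non-isolated in $\cg{G}$ whenever its fibre contains an edge: writing $x = v_{i,j}$ with $i \neq j$, maximality of $C_i$ forbids $C_i \subseteq C_j$, producing a vertex of $C_i \setminus C_j$ whose image is a genuine neighbour of $x$ in $\cg{G}$, so $x$ is covered by $\mathcal{D}$. The argument for $x = v_{i,i}$ is analogous except when $C_i$ is an entire connected component that is a clique, in which case $x$ is a true isolated vertex of $\cg{G}$; this degenerate case is precisely where Proposition~\ref{prop:phiconnects} enters, relating the components of $G$ to those of $\cg{G}$, and it is handled by adopting the convention that isolated vertices contribute a singleton clique to any cover (equivalently, by treating each clique component of $G$ on its own). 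With this understood, the two inequalities combine to give $\CC(G) = \CC(\cg{G})$.
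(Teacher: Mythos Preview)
Your approach is essentially the same as the paper's: push the min-max cover $\{C_1,\dots,C_\ell\}$ forward through $\phi$ to get a clique cover of $\cg{G}$ of size $\ell$, and pull a minimal cover $\{D_1,\dots,D_m\}$ of $\cg{G}$ back through $\phi^{-1}$ (using Lemma~\ref{func}) to get a clique cover of $G$ of size $m$. You are in fact more careful than the paper about the degenerate case $\phi(u)=\phi(w)$, which the paper's proof silently passes over; your appeal to Proposition~\ref{prop:phiconnects} is not quite the right tool there, but the observation that a clique component of $G$ collapses to an isolated vertex of $\cg{G}$ and that both contribute one clique under the convention $\CC(K_1)=1$ is exactly what is needed.
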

\begin{proof}
Assume that the clique cover number of $G$ is $\ell$ and that
$\{C_1,C_2,\dots, C_\ell\}$ is a min-max clique covering with 
simple intersection.

Let $D_i$ be the set of vertices in $\cg{G}$ that are labeled
$v_{i,j}$ from some $j \in \{1,2,\dots,\ell\}$ (so $j$ could equal
$i$). Then $D_i$ is a clique in $\cg{G}$. We claim that
$\{D_1,D_2,\dots,D_\ell\}$ is a clique cover for $\cg{G}$. The only
edges in $\cg{G}$ are between vertices $v_{i,j}$ and $v_{i, k}$ for
some $i,j,k \in \{1, \dots, \ell\}$, so the sets $D_i$ with $i \in
\{1,\dots,\ell\}$ cover all of the edges of $\cg{G}$. Hence the sets
$D_i$ form a clique cover of $\cg{G}$.  Thus $\CC(G) \geq
\CC(\cg{G})$.

On the other hand, let $\mathcal{C} = \{C_1, C_2, \dots, C_\ell\}$ be the
clique covering of $G$ used to define the compressed cliques graph,
and let $\mathcal{D} = \{D_1, D_2, \dots, D_\ell \}$ be any minimal clique
covering for $\mathcal{C}(G)$. Let $\phi$ be the map defined in (\ref{phidefined}). 
We claim that $\{\phi^{-1}(D_1),
\phi^{-1}(D_2), \dots, \phi^{-1}(D_\ell) \}$ is a clique covering for
$G$.  

First, if $x,y \in \phi^{-1}(D_a)$, then $\phi(x)$ and $\phi(y)$ are
adjacent in $\mathcal{C}(G)$. This implies that $x$ and $y$ are both
contained in some clique in $G$. Hence $x$ and $y$ are adjacent in $G$,
and $\phi^{-1}(D_a)$ is a clique.  

Next, let $e = \{x,y\}$ be any edge in $G$. This edge is covered by a
clique in $\mathcal{C}$, say $C_1$. Then $\phi(x) = v_{1,a}$ and
$\phi(y) = v_{1.b}$. Thus there is an edge between $\phi(x)$ and
$\phi(y)$ in $\mathcal{C}(G)$. This edge is contained in some clique
in $\mathcal{D}$, say $D_a$. Then both $x$ and $y$ are in
$\phi^{-1}(D_a)$, so the edge $\{x,y\}$ is covered.  Thus,
$\{\phi^{-1}(D_1), \phi^{-1}(D_2), \dots, \phi^{-1}(D_\ell) \}$ is a
clique covering of $G$ and $\CC(G) \leq \CC(\mathcal{C}(G))$.
\end{proof}

We now have the following interesting consequence.

\begin{cor}
  Assume that $G$ is a graph with a min-max clique covering with
  simple intersection. Let $\{D_1, \dots , D_\ell\}$ be the set of
  cliques in $\cg{G}$ defined such that $D_i$ be the set of vertices in $\cg{G}$ that are labeled
$v_{i,j}$ from some $j \in \{1,2,\dots,\ell\}$. Then the following statements hold:
\begin{enumerate}
\item The set $\{D_1, D_2, \dots, D_\ell\}$ forms a min-max clique
  cover of $\cg{G}$; and 
\item this clique cover of $\cg{G}$ has simple intersection.
\end{enumerate}
\end{cor}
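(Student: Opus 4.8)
The plan is to verify the two claimed properties of the clique cover $\{D_1, D_2, \dots, D_\ell\}$ of $\cg{G}$, building directly on the structure established in the proof of Theorem~\ref{ccofCG}. Recall that there $D_i$ is defined to be the set of all vertices $v_{i,j}$ of $\cg{G}$ (with $j$ ranging over $\{1,\dots,\ell\}$, possibly $j=i$), and it was already shown that each $D_i$ is a clique and that $\{D_1,\dots,D_\ell\}$ covers every edge of $\cg{G}$. Moreover, the proof of Theorem~\ref{ccofCG} establishes $\CC(G)=\CC(\cg{G})=\ell$, so this covering has exactly $\ell$ cliques, which is the clique cover number. Thus the only thing left to check for a \emph{min-max} cover is maximality of each $D_i$.

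First I would prove part (1). Since the covering already has size $\CC(\cg{G})=\ell$ and consists of cliques, it suffices to argue that each $D_i$ is maximal. Suppose a vertex $v_{a,b}$ of $\cg{G}$ could be added to $D_i$ to form a larger clique; then $v_{a,b}$ must be adjacent to every vertex of $D_i$. By the adjacency rule in $\cg{G}$, the label $\{a,b\}$ must intersect $\{i,j\}$ for every $v_{i,j}\in D_i$. The cleanest route is to observe that $D_i$ always contains (at worst after noting which of the $v_{i,j}$ are actually present) vertices whose labels force any common neighbour to contain $i$ in its label; concretely, if $v_{a,b}\notin D_i$ then $i\notin\{a,b\}$, and I would exhibit a vertex $v_{i,j}\in D_i$ with $\{i,j\}\cap\{a,b\}=\emptyset$, contradicting adjacency. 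This requires knowing $D_i$ contains a label $\{i,j\}$ disjoint from $\{a,b\}$; the argument is essentially the observation that $C_i$ is maximal in $G$, pushed through $\phi$. So I would phrase maximality of $D_i$ as a consequence of the maximality of $C_i$ together with Lemma~\ref{func} (preimages of cliques are cliques), arguing that a strictly larger clique in $\cg{G}$ would pull back to a clique properly containing $C_i$ in $G$.

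Second, for part (2), I would verify simple intersection directly from the labelling. A vertex $v_{i,j}$ of $\cg{G}$ lies in $D_i$ and in $D_j$, and in no other $D_k$ (since its label is exactly $\{i,j\}$, which contains only the indices $i$ and $j$). Hence each vertex of $\cg{G}$ belongs to at most two of the cliques $D_1,\dots,D_\ell$, which is precisely the statement that no three of them share a common vertex, i.e.\ simple intersection. This part is essentially immediate from the definitions and should require only a sentence or two.

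The main obstacle is the maximality argument in part (1): the covering edges and size are already handed to us, but ruling out an enlargement of $D_i$ needs care because $D_i$ may be missing some labels $\{i,j\}$ (those $C_{i,j}$ that are empty), so one cannot blindly assume every pair $\{i,j\}$ appears. I expect the clean way around this is to lift the hypothetical larger clique through $\phi^{-1}$ using Lemma~\ref{func} and contradict the maximality of the original clique $C_i$ in $G$, thereby reducing part (1) to a property already known about $\mathcal{C}$ rather than re-deriving it in $\cg{G}$.
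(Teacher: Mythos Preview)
Your proposal is correct. Part (2) is essentially identical to the paper's argument: a vertex $v_{i,j}$ lies only in the $D_k$ with $k\in\{i,j\}$, so no three of the $D_k$ meet.

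For part (1), however, your route genuinely differs from the paper's. The paper argues maximality of $D_i$ by a direct case analysis inside $\cg{G}$: it splits on whether $v_{i,i}\in D_i$, whether $D_i$ is a singleton, and otherwise deduces that an extra vertex $u$ adjacent to all of $D_i$ must be $v_{a,b}$ with $D_i=\{v_{i,a},v_{i,b}\}$, and then obtains a contradiction by noting that any vertex of $C_a\cap C_b$ could be appended to $C_i$. Your approach instead pulls the hypothetical larger clique $D_i\cup\{v_{a,b}\}$ back through $\phi^{-1}$ via Lemma~\ref{func}, observes that $\phi^{-1}(D_i)=C_i$ and $\phi^{-1}(v_{a,b})=C_{a,b}$ is nonempty and disjoint from $C_i$ (since $i\notin\{a,b\}$ and the original cover has simple intersection), and thereby produces a clique in $G$ strictly containing $C_i$. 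This is cleaner and more conceptual: it replaces the three-way case split with a single application of the preimage lemma and the already-known maximality of $C_i$. The paper's approach, by contrast, is self-contained in $\cg{G}$ and does not invoke Lemma~\ref{func}, at the cost of a longer case analysis.
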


\begin{proof}
  From Theorem~\ref{ccofCG}, the set $\{D_i\}$ forms a minimal clique
  cover of $\cg{G}$. So to prove the first statement, we need to show
  that each clique $D_i$ is maximal. Assume that there is a vertex $u$
  in $\cg{G}$ that is adjacent to every vertex in $D_i$, but is not in
  $D_i$.
If $v_{i,i} \in D_i$ then, since $u$ is adjacent to $v_{i,i}$, it
  must be that $u= v_{i,j}$ for some $j$; but in this case the vertex
  $u$ would be in $D_i$. 
If $D_i$ contains the single vertex $v_{i,j}$ (where $i \neq j$),
  then $C_i \subseteq C_j$, which is a contradiction.  Thus we may
  assume that $D_i$ contains the vertices $v_{i,a}$, $v_{i,b}$. Since
  $u \not \in D_i$, this implies that $u = v_{a,b}$, so $D_i$ cannot
  contain any vertices other than these two vertices.  In this case,
  $C_i = C_{i,a} \cup C_{i,b}$, and every vertex in $C_a \cap C_b$
  (since $v_{a,b} \in \cg{G}$, this intersection is non-empty) is
  adjacent to every vertex in $C_i$. This contradicts $C_i$
  being a maximal clique.

  The second statement follows easily, since if a vertex with label
  $v_{i,j}$ from $\cg{G}$ is contained in three maximal cliques, this
  contradicts the fact that the original min-max clique covering of
  $G$ satisfied simple intersection.
\end{proof}

\section{Positive zero forcing sets in compressed cliques graphs}

Using the strong connections to clique coverings, 
we now explore positive
zero forcing sets and the positive zero forcing number of 
compressed cliques graphs. This is one of our main motivations for
developing this derived graph. Our results in this section are related
to the following simple observation about positive zero forcing sets. First note
that, by definition, if $N_G[u] = N_G[v]$, then $u$ and $v$ must be adjacent.

\begin{lem}
Let $G$ be a graph and $u$ and $v$ be distinct vertices in $G$. If
$N_G[u] = N_G[v]$, then any positive zero forcing set for $G$ will contain at least
one of $u$ and $v$.
\end{lem}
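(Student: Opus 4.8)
The plan is to argue by contradiction using the defining property of the positive colour change rule. Suppose $S$ is a positive zero forcing set for $G$ that contains neither $u$ nor $v$, where $u$ and $v$ are distinct vertices with $N_G[u] = N_G[v]$. Since $S$ forces all of $V(G)$ black, at some stage one of $u, v$ must be forced; without loss of generality suppose $u$ is the first of the two to be forced, say by a black vertex $w$ applying the positive colour change rule. I want to show this is impossible because at the moment $u$ is forced, $v$ is still white and is also a white neighbour of $w$ in the relevant induced subgraph, so $w$ cannot have $u$ as its \emph{only} white neighbour.

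First I would record the elementary observation already flagged in the text: since $N_G[u] = N_G[v]$ and $u \ne v$, the vertex $u$ lies in $N_G[v]$, so $u$ and $v$ are adjacent; moreover every vertex adjacent to $u$ (other than $v$) is adjacent to $v$ and vice versa. In particular, the forcing vertex $w$ is a common neighbour of both $u$ and $v$ (it is adjacent to $u$ since it forces $u$, hence $w \in N_G(u) \subseteq N_G[v]$, and $w \ne v$ because $v$ is not yet black). The key step is then to show that $u$ and $v$ lie in the \emph{same} connected component $W_i$ of $G$ after deleting the current black set $B$ (the set of vertices black at the moment of the force). This holds because $u$ and $v$ are adjacent and both white at that moment, so the single edge $\{u,v\}$ already places them in the same component of $G - B$.

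With $u$ and $v$ in the same component $W_i$, both belong to the induced subgraph on $W_i \cup B$, and both are neighbours of $w \in B$ in that subgraph. Since $v$ is still white (as $u$ was chosen to be forced first), $w$ has at least two white neighbours, namely $u$ and $v$, in $G[W_i \cup B]$. This directly contradicts the requirement of the positive colour change rule that $w$ force $u$ only when $u$ is the unique white neighbour of $w$ in $G[W_i \cup B]$. Hence no such $w$ can perform the force, so neither $u$ nor $v$ can ever be forced, contradicting that $S$ is a positive zero forcing set. Therefore $S$ must contain at least one of $u$ and $v$.

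I expect the only delicate point to be the bookkeeping around ``the first of $u,v$ to be forced'' and the fact that $v$ is genuinely white and lies in the same component at that instant; once these are pinned down, the contradiction with the uniqueness clause in the positive colour change rule is immediate. There are no heavy calculations, so the main obstacle is purely in setting up the contradiction carefully with respect to the dynamic black set $B$ at the chosen timestep.
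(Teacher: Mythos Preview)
Your proposal is correct and follows essentially the same approach as the paper's proof: both observe that $u$ and $v$ are adjacent (hence always in the same component of $G-B$), that every neighbour of one is a neighbour of the other, and therefore no vertex can ever have exactly one of them as its unique white neighbour. The paper's argument is simply a terser version of yours, omitting the explicit ``first of $u,v$ to be forced'' bookkeeping that you spell out.
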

\begin{proof}
Assume that $u$ and $v$ are both initially white in a colouring of $G$.
Since $u$ and $v$ are adjacent, they will always be in the same
component of $G$ after any set of vertices are removed.
Any vertex that is adjacent to one of $u$ or $v$, is adjacent to the
other, thus no vertex will ever be able to force either of them.
\end{proof}

This can be generalized to a subset of vertices as in the following fact.

\begin{cor}
  Let $G$ be a graph and $\{u_1,u_2, \dots ,u_k\}$ be a subset of $k$ vertices from
  $G$. If $N_G[u_1] = N_G[u_2]= \dots = N_G[u_k]$, then any positive
  zero forcing set for $G$ will contain at least $k-1$ of the vertices
  $\{u_1,u_2, \dots ,u_k\}$.
\end{cor}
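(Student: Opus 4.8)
The plan is to derive this immediately from the preceding lemma by a short counting argument. Suppose, for the sake of contradiction, that there is a positive zero forcing set $S$ for $G$ that contains at most $k-2$ of the vertices $u_1, u_2, \dots, u_k$. Then at least two of these vertices, say $u_i$ and $u_j$ with $i \neq j$, are not in $S$. Since $N_G[u_i] = N_G[u_j]$, the preceding lemma guarantees that every positive zero forcing set of $G$ contains at least one of $u_i$ and $u_j$, contradicting the assumption that neither lies in $S$. Hence any positive zero forcing set must contain at least $k-1$ of the vertices $u_1, \dots, u_k$.

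Alternatively, I would repeat the mechanism of the lemma's proof directly, which makes the statement self-contained. If two of the vertices, say $u_i$ and $u_j$, are both coloured white at the start, then because $N_G[u_i] = N_G[u_j]$ they are adjacent and therefore always lie in the same connected component after any set of black vertices is removed; moreover, any vertex adjacent to one of them is adjacent to the other. Consequently no black vertex can ever have a \emph{unique} white neighbour in the relevant induced subgraph when that neighbour is $u_i$ or $u_j$, so neither of them can ever be forced. Thus at most one of $u_1, \dots, u_k$ can be left white at the outset, which again forces at least $k-1$ of them into the initial set.

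There is no genuine obstacle here: the substantive content is entirely carried by the preceding lemma, whose conclusion already concerns the whole forcing set rather than a single step of the process. The only point deserving a moment of care is the reduction itself---that having at most $k-2$ of the vertices in $S$ forces at least two of them to remain white simultaneously---which is merely pigeonhole. It is also worth remarking that the bound $k-1$ cannot be improved in general: taking $G = K_k$, all $k$ vertices share the same closed neighbourhood, and indeed $Z_+(K_k) = k-1$, so exactly $k-1$ of them are required.
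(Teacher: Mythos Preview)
Your argument is correct and is precisely the intended derivation: the paper states the corollary immediately after the lemma without proof, so the one-line pigeonhole reduction to the lemma that you give is exactly what is implied. Your alternative direct argument and the remark that $K_k$ shows the bound is sharp are both fine additions.
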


We can apply the previous result to cliques in a min-max clique covering
with simple intersection.

\begin{lem}\label{lem:onlyone}
  Assume that $G$ is a graph and $\{C_1,C_2, \dots, C_\ell\}$ is a
  min-max clique covering with simple intersection.  If $S$ is a positive zero
  forcing set, then the sets $(V(G) \backslash S) \cap C_{i,j}$ and
  $(V(G) \backslash S) \cap C_{i,i}$ for any $i,j \in \{1,\dots,n\}$
  have size at most one.
\end{lem}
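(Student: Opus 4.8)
The plan is to reduce the statement to the corollary stated immediately above (on sets of vertices sharing a common closed neighbourhood). That corollary guarantees that if $k$ vertices all have the same closed neighbourhood, then any positive zero forcing set must contain at least $k-1$ of them, so at most one of them can be left white. Hence it suffices to prove that within each cell $C_{i,j}$ (with $i \neq j$) and within each cell $C_{i,i}$, \emph{all} the vertices have identical closed neighbourhoods; the bounds on $(V(G)\setminus S)\cap C_{i,j}$ and $(V(G)\setminus S)\cap C_{i,i}$ then follow at once, since leaving two white vertices in a single cell would violate the corollary.

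First I would treat the off-diagonal cells $C_{i,j} = C_i \cap C_j$ with $i \neq j$. Here Lemma~\ref{lem:neighbor} does essentially all the work: for any vertex $v \in C_i \cap C_j$ it gives $N_G[v] = C_i \cup C_j$. Since the right-hand side does not depend on the choice of $v$ inside the cell, every vertex of $C_{i,j}$ has the same closed neighbourhood $C_i \cup C_j$, and the corollary applies directly.

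For the diagonal cells $C_{i,i}$ a short direct argument is needed, since Lemma~\ref{lem:neighbor} is phrased for the intersection of two distinct cliques. Fix $v \in C_{i,i}$. Because $C_i$ is a clique, $v$ is adjacent to every other vertex of $C_i$, so $C_i \subseteq N_G[v]$. Conversely, any edge incident to $v$ must be covered by some clique of the covering; since $v$ lies in $C_i$ and in no other clique, that covering clique must be $C_i$, forcing every neighbour of $v$ into $C_i$. Hence $N_G[v] = C_i$, again independent of $v$, so all vertices of $C_{i,i}$ share the closed neighbourhood $C_i$.

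With both cases established, the preceding corollary applies to each cell and shows that $S$ omits at most one vertex from each $C_{i,j}$ and each $C_{i,i}$, which is exactly the claim; cells of size $0$ or $1$ satisfy the bound trivially. I do not expect a genuine obstacle here: the only point requiring care is noticing that the diagonal cells $C_{i,i}$ fall outside the scope of Lemma~\ref{lem:neighbor} and must be handled by the simple-intersection argument above rather than by direct citation.
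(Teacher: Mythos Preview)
Your proof is correct. The underlying idea is the same as the paper's---two vertices in the same cell $C_{i,j}$ or $C_{i,i}$ have identical closed neighbourhoods, so no forcing step can distinguish them---but you package it differently: you invoke Lemma~\ref{lem:neighbor} to get $N_G[v]=C_i\cup C_j$ for the off-diagonal cells, give the short direct argument $N_G[v]=C_i$ for the diagonal cells, and then apply the preceding corollary. The paper instead re-runs the forcing contradiction from scratch (if $u_1,u_2$ are both white in a cell, any would-be forcer of $u_1$ lies in $C_i$ or $C_j$ and hence also sees the white $u_2$), without explicitly citing either Lemma~\ref{lem:neighbor} or the corollary. Your route is a bit more modular and makes clearer why those earlier statements were recorded; the paper's is self-contained but essentially duplicates that reasoning.
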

\begin{proof}

  Assume that there are two vertices $u_1,u_2$ in $(V(G) \backslash S)
  \cap C_{i,j}$ (where $i \neq j$). Then in the positive zero forcing process
  in which the vertices in $S$ are initially black, both $u_1$ and
  $u_2$ are initially white.

  At some point in the positive zero forcing process there is a vertex $v$
  which forces $u_1$ (assuming that $u_2$ is not forced before
  $u_1$). Since $u_1$ and $u_2$ are adjacent, they must be in the same
  component. Since $v$ is adjacent to $u_1$, it must be in either
  $C_i$ or $C_j$ and thus it is also adjacent to $u_2$. But since
  $u_2$ is white at this stage $v$ cannot force $u_1$.

  A similar argument shows that there cannot be two vertices in $(V(G)
  \backslash S) \cap C_{i,i}$.
\end{proof}

\begin{thm}\label{zplus-compress}
  Let $G$ be a graph (that is connected, but not a clique) in which the
  maximal cliques have simple intersection and let $\cg{G}$ be the
  compressed cliques graph of $G$.  Then
\[
|V(G)| - Z_+(G) = |V(\cg{G})| - Z_+(\cg{G})
\]
and there exist forcing trees for $G$ and $\cg{G}$ that differ only in
that these forcing trees for $G$ are isolated vertices.
\end{thm}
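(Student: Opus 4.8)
The plan is to interpret the quantity $|V(G)|-Z_+(G)$ combinatorially and then transport an optimal positive forcing process between $G$ and $\cg{G}$ through the contraction map $\phi$ of (\ref{phidefined}). Recall that a run of the positive colour change rule from an initial black set $S$ partitions the vertices into vertex-disjoint rooted forcing trees whose roots are exactly the vertices of $S$; consequently the number of forces equals the number of non-root vertices, so for an \emph{optimal} set $S$ the value $|V(G)|-Z_+(G)$ equals the total number of forces performed. It therefore suffices to produce, in each direction, a valid positive forcing process whose forest realizes the claimed correspondence; the two resulting inequalities $|V(G)|-Z_+(G)\le |V(\cg{G})|-Z_+(\cg{G})$ and $|V(G)|-Z_+(G)\ge|V(\cg{G})|-Z_+(\cg{G})$ then combine to the equality, and the explicit blow-up used below yields the statement about forcing trees.

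For the inequality $\le$ (projection), I would start from an optimal positive zero forcing set $S$ of $G$ with white set $\bar S=V(G)\setminus S$. By Lemma~\ref{lem:onlyone} each cell $C_{i,j}$ (respectively $C_{i,i}$) contains at most one white vertex, so $\phi$ restricts to an injection on $\bar S$; set $W'=\phi(\bar S)$ and $S'=V(\cg{G})\setminus W'$, so that $|S'|=|V(\cg{G})|-|\bar S|$. I would then show $S'$ is a positive zero forcing set of $\cg{G}$ by pushing each force $u\to w$ of $G$ forward to $\phi(u)\to\phi(w)$. Since $w$ is white, $\phi(w)\in W'$ is white; and since $u$ forces $w$, the vertices $u,w$ cannot share a closed neighbourhood, so by Lemma~\ref{lem:neighbor} they lie in different cells and $\phi(u)\neq\phi(w)$. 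This yields $Z_+(\cg{G})\le |S'|$, that is, $|V(G)|-Z_+(G)=|\bar S|\le|V(\cg{G})|-Z_+(\cg{G})$.

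For the inequality $\ge$ (lifting), which also produces the forcing trees, I would start from an optimal positive zero forcing set $S'$ of $\cg{G}$ with white set $W'$. For each white vertex $v_{i,j}\in W'$ choose one representative $r_{i,j}$ in the corresponding non-empty cell, colour it white, and colour every remaining vertex of $G$ black; call this set $S$, so $|S|=|V(G)|-|W'|$. I would then lift the forcing forest of $\cg{G}$ to $G$ by replacing each $\cg{G}$-vertex by its chosen representative and leaving the other (black) vertices of each cell as single-vertex forcing trees. These extra trees are precisely the promised isolated vertices, and the blown-up forest shows $S$ is a positive zero forcing set of $G$; hence $Z_+(G)\le|S|$ and $|V(G)|-Z_+(G)\ge|W'|=|V(\cg{G})|-Z_+(\cg{G})$, completing both the equality and the forcing-tree claim.

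The main obstacle in both directions is verifying that the positive colour change rule is genuinely preserved under $\phi$, since that rule refers to the connected components of the graph obtained by deleting the current black set and to the uniqueness of a white neighbour within such a component. The key technical step I expect to need is a correspondence between the components of $G-B$ and those of $\cg{G}-\phi(B)$ for the current black set $B$: that $\phi$ maps components into components follows from Proposition~\ref{prop:phiconnects}, and the reverse containment should follow from the surjectivity of $\phi$ together with the fact that preimages of cliques under $\phi$ are cliques (Lemma~\ref{func}). Granting this, Lemma~\ref{lem:neighbor} identifies the neighbours of a cell-vertex with $C_i\cup C_j$ and Lemma~\ref{lem:onlyone} guarantees at most one white vertex per cell, so a forcing vertex in $G$ has a unique white neighbour in its component exactly when its image does in $\cg{G}$; checking this equivalence carefully, for both the projection and the blow-up, is the heart of the argument.
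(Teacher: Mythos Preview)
Your overall strategy is exactly the paper's: lift an optimal set from $\cg{G}$ to $G$ by choosing one white representative per white cell (giving $\ge$ and the isolated-vertex forcing trees), and project an optimal set from $G$ to $\cg{G}$ via $\phi$ using Lemma~\ref{lem:onlyone} (giving $\le$). The paper carries out the projection step by an explicit five-case analysis according to which cells the forcer $v$ and the forced vertex $u$ lie in, rather than by the abstract component-correspondence you outline; but this is a difference of presentation, not of method.

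There is, however, one genuine slip in your projection argument. You assert that if $u$ forces $w$ in $G$ then $u$ and $w$ cannot share a closed neighbourhood, and hence $\phi(u)\neq\phi(w)$. That assertion is false: if $u,w$ lie in the same cell (say both in $C_{i,i}$, or both in $C_{i,j}$) and every other vertex of $C_i$ (respectively $C_i\cup C_j$) is already black, then $w$ becomes an isolated white vertex and $u$ legitimately forces $w$ under the positive rule, even though $N_G[u]=N_G[w]$. In that situation $\phi(u)=\phi(w)$ and your pushed-forward ``force'' is vacuous. The paper's Cases~2 and~5 treat precisely this situation: when forcer and target share a cell, one replaces the forcer by a vertex from a \emph{different} cell of the same clique (such a cell exists because $G$ is connected and not a clique), and then the argument proceeds as in your Cases~1,~3,~4 analogue. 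So your plan is sound, but the step ``$\phi(u)\neq\phi(w)$ because $u$ forces $w$'' needs to be replaced by this substitution manoeuvre; everything else goes through as you describe.
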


\begin{proof}
  Let $S'$ be an optimal positive zero forcing set for $\cg{G}$; then
  the vertices in set $V(\cg{G}) \backslash S'$ are initially coloured
  white in the positive zero forcing process.  Using this set of
  vertices, we can construct a positive zero forcing set for $G$ as
  follows. If $v_{i,i}$ is in the set $V(\cg{G}) \backslash S'$, then
  colour exactly one of the vertices in $C_{i,i}$ white. Similarly, if
  $v_{i,j}$ is in the set $V(\cg{G}) \backslash S$, then colour
  exactly one of the vertices in $C_{i,j}$ white. Colour all the
  vertices that are not white, black. We claim that the set of black
  vertices is a positive zero forcing set in $G$. To see this,
  consider that during the positive zero forcing process on $\cg{G}$ the
  vertex $v_{i,j}$ forces the vertex $u_{a,b}$. In this case every vertex in
  $C_{i,j}$ (in $G$) is black and any one of these vertices can force
  the single white vertex in $C_{a,b}$.  This also shows that the set
  of forcing trees for $G$ and $\cg{G}$ only differ in that the set of
  forcing trees for $G$ may have more isolated vertices.

  Thus the number of vertices initially coloured white in $G$ is at
  least as the number initially colour white in $\cg{G}$; that is
\[
|V(G)| - Z_+(G) \geq |V(\cg{G})| - Z_+(\cg{G}).
\]

Conversely, if $S$ is a positive zero forcing set for $G$, then the vertices the $V(G)
\backslash S$ are all initially coloured white. Using these vertices
it is possible to construct a positive zero forcing set for $\cg{G}$. From
Lemma~\ref{lem:onlyone}, there is at most one vertex in $C_{i,j}$ for
any pair $i,j\in \{1, \dots, \ell\}$ that is initially coloured white.
If one vertex in $C_{i,j}$ is white, then the vertex $v_{i,j}$  (the representative chosen
in the construction of $\cg{G}$) in $\cg{G}$
is initially coloured white; all other vertices are coloured black.

Assume that at a step in the positive zero forcing process on $G$ the vertex
$v$ forces $u$. Then $v$ must be black at this step, and both $v$ and $u$ must
both belong to some maximal clique, say $C_i$. Consider the following
four cases.

{\bf Case 1:} Assume $u \in C_{i,i}$ and $v \in C_{i,j}$ for some
$j$. Since $v$ forces $u$, all the vertices in $C_i\backslash u$ must
already be black (since they are all adjacent to $v$). Any vertex
$v_{i,a}$ (except $v_{i,i}$ which is white, since $u$ is white) must
be black; since these are the only vertices adjacent to $v_{i,j}$, the
vertex $v_{i,j}$ can force $v_{i,i}$ in $\cg{G}$.

{\bf Case 2:} Assume that $u \in C_{i,i}$ and $v \in C_{i,i}$. Then all the vertices in
$C_i$, except $u$, must be black. When the black vertices are removed,
$u$ is an isolated vertex, so it can also be forced by any vertex in
$C_{i,j}$. This is Case 1.

{\bf Case 3:} Assume $u \in C_{i,j}$ and $v \in C_{i,i}$ for some $j$. Since $v$ can
force $u$, all the vertices in $C_i\backslash u$ must be black. Thus
every vertex $v_{i,a}$, except $v_{i,j}$, must be black in $\cg{G}$. The only
white neighbour of $v_{i,i}$ is $v_{i,j}$. Hence $v_{i,i}$ can force $v_{i,j}$ in $\cg{G}$.

{\bf Case 4:} Assume $u \in C_{i,j}$ and $v \in C_{i,k}$ for $j\neq
k$. Since $v$ can force $u$, the only white vertex adjacent to $v$ in
the same component as $u$ is $u$. This implies that all other vertices in $C_{i,k}$ are black.
Thus by Proposition~\ref{prop:phiconnects}, $v_{i,j}$ is the only white
vertex adjacent to $v_{i,k}$ in its components, so $v_{i,k}$ can force $v_{i,j}$.

{\bf Case 5:} If $u \in C_{i,j}$ and $v \in C_{i,j}$ for some $j$, then all the
vertices in $C_i$ and $C_j$, except $u$, are black. If the black
vertices are removed, then $u$ will be isolated, so any vertex in $(C_i
\cup C_j) \backslash C_{i,j}$ can force $u$ (this set cannot be empty
since $C_i$ and $C_j$ are distinct, maximal cliques). Then this is either Case 3 or Case 4.
\end{proof}

%%%%%%%%%%%%%%%%%%%%%%%
\section{Connections with Johnson graphs}
\label{sec:Jonhson}

The compressed cliques graph is related to the \emph{Johnson graph}
$J(m,2)$ \cite{HGT}.  The graph $J(m,2)$ has the set of pairs from $\{1,2,\dots,m\}$ as
its vertex set and two vertices are adjacent if and only if they have
non-empty intersection.  This graph is the line graph of the
complete graph.

We will use a generalization of this graph that we denote by
$J'(m,2)$. The vertices of $J'(m,2)$ are the set $\{i,j\}$ and $\{i\}$
where $i,j \in \{1,2,\dots,m\}$ and two sets are adjacent if and only
if they have non-empty intersection. These graphs play a major role in
the theory of compressed cliques graphs.

\begin{lem}
  If $G$ is a graph with $\CC(G)>1$ and $G$ has a min-max clique
  covering that satisfies simple intersection, then the compressed
  cliques graph of $G$ will be an induced subgraph of $J'(\CC(G),2)$.
\end{lem}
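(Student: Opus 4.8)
The plan is to exhibit an explicit injective map from $V(\cg{G})$ into $V(J'(\CC(G),2))$ that both preserves and reflects adjacency, which is exactly what is required for $\cg{G}$ to be an induced subgraph of $J'(\CC(G),2)$. The entire argument is a translation between two adjacency rules that are, by construction, identical, so the work is bookkeeping rather than a substantive obstruction.

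First I would fix a min-max clique covering $\mathcal{C} = \{C_1, \dots, C_\ell\}$ of $G$ that satisfies simple intersection; since $\mathcal{C}$ is min-max, its size equals $\CC(G)$, so $\ell = \CC(G)$. By the construction of the compressed cliques graph, every vertex of $\cg{G}$ is one of the contracted vertices $v_{i,j}$ arising from a non-empty $C_{i,j} = C_i \cap C_j$ with $i \neq j$, or $v_{i,i}$ arising from a non-empty $C_{i,i}$, where in all cases the indices lie in $\{1, \dots, \ell\}$. Because $\mathcal{C}$ has simple intersection, no vertex of $G$ belongs to three of the cliques, so the sets $C_{i,j}$ (for $i \neq j$) and $C_{i,i}$ are pairwise disjoint and partition $V(G)$; consequently distinct vertices of $\cg{G}$ carry distinct labels, each label being a subset of $\{1,\dots,\ell\}$ of size one or two.

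I would then define $\psi: V(\cg{G}) \to V(J'(\ell,2))$ by $\psi(v_{i,j}) = \{i,j\}$ and $\psi(v_{i,i}) = \{i\}$. Since the labels are distinct subsets of $\{1,\dots,\ell\}$ of size one or two, and $V(J'(\ell,2))$ consists of exactly such subsets, the map $\psi$ is well-defined and injective, with image contained in $V(J'(\ell,2))$. The key verification is that $\psi$ preserves and reflects adjacency: by the definition of the compressed cliques graph, $v_{i,j}$ and $v_{i',j'}$ are adjacent in $\cg{G}$ if and only if $\{i,j\} \cap \{i',j'\} \neq \emptyset$, and by the definition of $J'(m,2)$ the images $\psi(v_{i,j})$ and $\psi(v_{i',j'})$ are adjacent in $J'(\ell,2)$ if and only if those same label sets intersect. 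Hence two vertices are adjacent in $\cg{G}$ precisely when their images are adjacent in $J'(\ell,2)$, so $\psi$ identifies $\cg{G}$ with the subgraph of $J'(\ell,2)$ induced on $\psi(V(\cg{G}))$.

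The only point requiring genuine care, and the sole place where the hypotheses are used, is confirming that each vertex of $\cg{G}$ is labelled by a legitimate element of $V(J'(\ell,2))$: this is where simple intersection enters, guaranteeing that every label has size at most two, and where the min-max property enters, guaranteeing $\ell = \CC(G)$ so that the labels lie in the correct ground set $\{1,\dots,\CC(G)\}$. I expect no serious obstacle beyond these verifications; the hypothesis $\CC(G) > 1$ merely excludes the degenerate case $G = K_n$ (where $\cg{G} = K_1$ embeds trivially), and the claim follows once $\psi$ is shown to be an adjacency-preserving and adjacency-reflecting injection.
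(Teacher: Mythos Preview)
Your proof is correct and is exactly the argument the paper has in mind; the lemma is stated without proof in the paper because the map $v_{i,j}\mapsto\{i,j\}$, $v_{i,i}\mapsto\{i\}$ is, by construction, an adjacency-preserving and adjacency-reflecting injection, which you have spelled out carefully.
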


There is a simple clique covering for this generalization of the
Johnson graph. Define $C_i$ to be the set of all vertices in $J'(m,2)$
that contain $i$. Then $C_i$ is a maximal clique and $\mathcal{C} =
\{C_1,\dots, C_m\}$ is a min-max clique covering that has simple
intersection.

\begin{lem}
For any $m>3$,
\begin{enumerate}
\item $|V(J'(m,2))| = \binom{m}{2} + m$,
\item $\CC(J'(m,2)) = m$,
\item $Z_+(J'(m,2)) =  \binom{m}{2}$,
\item $Z_+(J'(m,2))  = |V(J'(m,2))| - \CC(J'(m,2))$.
\end{enumerate}
\end{lem}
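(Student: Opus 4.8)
The plan is to treat the four items in the order listed, with (1), (2), and (4) as bookkeeping and (3) as the only substantial step. For (1) I would simply count: $J'(m,2)$ has one vertex for each of the $\binom{m}{2}$ two-element subsets and one for each of the $m$ singletons, so $|V(J'(m,2))| = \binom{m}{2} + m$. For (2), the upper bound $\CC(J'(m,2)) \le m$ is already in hand, since the discussion immediately preceding the lemma exhibits the min-max clique covering $\{C_1,\dots,C_m\}$ (with $C_i$ the set of all vertices whose label contains $i$) and checks it has simple intersection. For the lower bound I would use the $m$ singletons $\{1\},\dots,\{m\}$: they are pairwise disjoint, hence pairwise non-adjacent, and any clique containing $\{i\}$ consists only of vertices adjacent to $\{i\}$, all of which contain $i$; such a clique lies inside $C_i$ and contains no other singleton. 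Since $m>3$, each $\{i\}$ has at least one incident edge (say to $\{i,j\}$) that must be covered, so every clique cover needs a clique through $\{i\}$, and these $m$ cliques are forced to be distinct. Hence $\CC(J'(m,2)) \ge m$, giving equality.

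Item (3) splits into matching bounds. For $Z_+ \le \binom{m}{2}$ I would exhibit the set $B$ of all $\binom{m}{2}$ pair-vertices as a positive zero forcing set: deleting $B$ leaves the $m$ singletons as isolated vertices, so each $\{i\}$ is its own component, and any black vertex $\{i,j\}$ has $\{i\}$ as its unique white neighbour in the graph induced by $\{\{i\}\}\cup B$ and therefore forces it. All singletons are forced simultaneously, so $B$ is a positive zero forcing set and $Z_+(J'(m,2)) \le \binom{m}{2}$.

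For the matching lower bound $Z_+ \ge \binom{m}{2}$, the cleanest route I see is through the maximum positive semidefinite nullity together with the standard bound $\Mp(G) \le Z_+(G)$. I would build an explicit Gram matrix realizing the pattern of $J'(m,2)$ inside $\mathbb{R}^m$: assign the standard basis vector $e_i$ to the singleton $\{i\}$ and $e_i + e_j$ to the pair $\{i,j\}$. One checks that the inner product of two assigned vectors is nonzero exactly when the corresponding label sets intersect, so the Gram matrix $M$ is positive semidefinite, has positive diagonal, and has the correct off-diagonal zero/nonzero pattern of $J'(m,2)$. Since all the vectors live in $\mathbb{R}^m$ we get $\rank(M) \le m$, hence $\nul(M) \ge |V(J'(m,2))| - m = \binom{m}{2}$. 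Therefore $\Mp(J'(m,2)) \ge \binom{m}{2}$, and so $Z_+(J'(m,2)) \ge \binom{m}{2}$. Combined with the upper bound this proves (3), and then (4) is pure arithmetic: $|V(J'(m,2))| - \CC(J'(m,2)) = \left(\binom{m}{2} + m\right) - m = \binom{m}{2} = Z_+(J'(m,2))$.

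I expect the lower bound in (3) to be the only genuine obstacle. A direct combinatorial attack is deceptively hard: counting forcing trees only gives $Z_+ \ge \T(J'(m,2))$, which is far too weak, and the stalled (fort) configurations are rich enough that for small $m$ there are stalled white sets of size $m+1$ containing none of the obvious small forts, so a self-contained fort-covering argument would be delicate. The Gram-matrix construction sidesteps all of this by producing a single high-nullity positive semidefinite realization, using only the defining inequality $\Mp(G)\le Z_+(G)$. If a purely forcing-theoretic proof is wanted instead, the fallback is to show that every white set of size at least $m+1$ is stalled; I would approach that by case analysis on how many singletons the white set contains and the triangle-free structure this forces on its pair-vertices, which is workable but noticeably messier than the nullity argument.
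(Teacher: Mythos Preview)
Your proposal is correct, but for the lower bound in (3) you take a noticeably heavier route than the paper. The paper dispatches it in one line using the clique-cover bound $|V(G)|-\CC(G)\le Z_+(G)$ (stated as Lemma~\ref{lem:ccbound}): since parts (1) and (2) already give $|V(J'(m,2))|-\CC(J'(m,2))=\binom{m}{2}$, the inequality $Z_+(J'(m,2))\ge\binom{m}{2}$ is immediate. You instead build an explicit Gram representation in $\mathbb{R}^m$ (with $\{i\}\mapsto e_i$ and $\{i,j\}\mapsto e_i+e_j$) and invoke $\Mp(G)\le Z_+(G)$. That construction is correct and does yield the bonus that $\Mp(J'(m,2))=\binom{m}{2}$, but it is more machinery than the statement requires; your remark that the lower bound is ``the only genuine obstacle'' and that ``a direct combinatorial attack is deceptively hard'' overlooks that the very quantities you computed in (1) and (2) feed directly into the standard clique-cover lower bound. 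Items (1), (2), and (4) in your write-up match the paper's argument essentially verbatim.
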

\begin{proof}
  The first statement is clear.

  For each $i$ define a clique $C_i$ that consists of all the vertices
  that contain $i$; clearly $\{ C_1,C_2, \dots, C_m\}$ is a clique
  covering of $J(m,2)'$ of size $m$. Further, each of the edges
  $\{\{i\}, \{i,1\}\}$ for $i \in \{1, 2, \dots, m\}$ must be in a
  distinct clique, so the clique cover number cannot be any smaller
  than $m$.

  The set of all vertices of the form $(i,j)$ with $i \neq j$ is a
  positive zero forcing set---if these vertices are removed, then the
  remaining graph is an independent set. Thus this positive zero
  forcing set is optimal from the well known bound
$|V(G)| - \CC(G) \leq Z_+(G)$ 
(this is Lemma~\ref{lem:ccbound}).
\end{proof}

For the graph $J'(m,2)$, in an optimal positive zero forcing process
there is initially one white vertex for each clique in the min-max clique
covering. In fact, we can assign each vertex $\{i\}$ to be white, then
within each maximal clique there is exactly one white vertex.

Similarly, the Johnson graph $J(m,2)$ also has a min-max clique covering with
simple intersection (using the same definition for the sets $C_i$).

\begin{lem}
For any $m>3$,
\begin{enumerate}
\item $V(J(m,2)) = \binom{m}{2}$,
\item $\CC(J(m,2)) = m$,
\item $Z_+(J(m,2)) =  \binom{m}{2}-m+2$,
\item $Z_+(J(m,2))  = |V(J(m,2))| - \CC(J(m,2)) + 2$.
\end{enumerate}
\end{lem}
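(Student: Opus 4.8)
The plan is to handle the four items in order, observing first that item~(4) is a purely arithmetic consequence of items~(1)--(3): substituting $|V(J(m,2))| = \binom{m}{2}$ and $\CC(J(m,2)) = m$ into $|V| - \CC + 2$ gives exactly $\binom{m}{2} - m + 2$, the value asserted in item~(3). Item~(1) is immediate, since the vertices of $J(m,2)$ are by definition the $2$-subsets of $\{1,\dots,m\}$. For item~(2), the cliques $C_i$ consisting of all pairs containing $i$ form a clique cover of size $m$ (two intersecting pairs share an index $i$ and hence both lie in $C_i$), so $\CC(J(m,2)) \le m$. For the reverse inequality I would count edges: each vertex of $J(m,2)$ has degree $2(m-2)$, so the graph has $\binom{m}{2}(m-2)$ edges, while a clique on $k$ vertices covers $\binom{k}{2}$ edges and the largest clique in $J(m,2)$ has $m-1$ vertices (for $m>3$). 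Hence any clique cover needs at least $\binom{m}{2}(m-2)/\binom{m-1}{2} = m$ cliques, and $\CC(J(m,2)) = m$.

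The real content is item~(3), which I would obtain by computing the largest number $w$ of vertices that can be left white in a successful positive zero forcing process, so that $Z_+(J(m,2)) = \binom{m}{2} - w$; the goal is to show $w = m-2$. The main device is to identify the vertices of $J(m,2)$ with the edges of $K_m$, so that a set $W$ of white vertices becomes a subgraph $H$ of $K_m$. The key translation is that, because $J(m,2)$ is the line graph $L(K_m)$, the connected components of the white part of $J(m,2)$ are exactly the connected components of $H$, and a black edge $\{a,b\}$ forces a white edge $w$ lying in a component $H_i$ precisely when $w$ is the unique white edge of $H_i$ meeting $a$ or $b$.

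For the bound $w \ge m-2$ I would exhibit the white subgraph $H$ equal to a path on $m-1$ of the $m$ vertices (hence $m-2$ edges), leaving one vertex $r$ uncovered; repeatedly the leaf edge at the current end of the path is the unique white neighbour of the black edge joining that end to $r$, so the path is peeled off one edge at a time. For $w \le m-2$ I would establish two structural facts about any fully forcible white subgraph $H$. First, $H$ must be acyclic: if $H$ contained a cycle $Z$, then at the first instant an edge of $Z$ were forced the whole of $Z$ would still be white, so that edge would be a non-bridge with both endpoints of white-degree at least $2$; but then every black edge adjacent to it meets at least two white edges of its component, so it could never be forced, and no edge of $Z$ is ever removed. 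Second, $H$ cannot be a spanning tree: a connected white subgraph covering all of $\{1,\dots,m\}$ offers no vertex outside its single component, and forcing any edge would require such an external vertex (the same white-degree argument shows non-leaf bridges are never forceable, and a leaf edge needs an outside vertex), so the process stalls immediately. Since a forest on $m$ vertices that is not a spanning tree has at least two components and hence at most $m-2$ edges, we conclude $w \le m-2$.

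The step I expect to be the main obstacle is this lower bound for item~(3): the generic inequality $|V(G)| - \CC(G) \le Z_+(G)$ from Lemma~\ref{lem:ccbound} only yields $\binom{m}{2} - m$, so the extra $+2$ must come from the finer analysis above. The delicate points are the claim that a non-bridge (in particular a cycle) edge can never be forced, and the careful bookkeeping of isolated vertices needed to turn ``$H$ is a non-spanning forest'' into the clean bound ``at least two components, hence at most $m-2$ edges.'' Once these forcing lemmas are in place, items~(1)--(4) follow as indicated.
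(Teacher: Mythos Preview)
The paper does not actually prove this lemma; it is stated without a proof environment, immediately after the analogous result for $J'(m,2)$ and prefaced only by the word ``Similarly.'' Your proposal is therefore more complete than what the paper provides, and it is correct.

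You are right to flag item~(3) as the crux. The device the paper used for $J'(m,2)$---exhibiting an explicit positive zero forcing set and then appealing to the bound $|V(G)|-\CC(G)\le Z_+(G)$ of Lemma~\ref{lem:ccbound}---does not transfer: for $J(m,2)$ that inequality gives only $Z_+\ge\binom{m}{2}-m$, two short of the target, so some additional argument is genuinely required. Your line-graph analysis supplies it. Identifying vertices of $J(m,2)=L(K_m)$ with edges of $K_m$, you show that a forcible white set $H\subseteq E(K_m)$ must be acyclic (otherwise the first cycle edge to be forced would, at that moment, have a second white neighbour through each of its endpoints, in the same component) and cannot be a spanning tree of $K_m$ (otherwise every black edge meets two vertices each carrying a white edge, and no initial force is possible). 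Together these force $H$ to be a forest on $\{1,\dots,m\}$ with at least two components, hence with at most $m-2$ edges; your path on $m-1$ vertices realises this bound. The edge-counting argument for $\CC(J(m,2))\ge m$ is also correct, with the ratio $\binom{m}{2}(m-2)\big/\binom{m-1}{2}=m$ as claimed.

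One minor simplification: in your argument for~(b) the leaf/non-leaf distinction is not needed. If $H$ is a spanning tree and $f=\{a,b\}$ is any black edge, then $\deg_H(a),\deg_H(b)\ge 1$, and the white edges incident to $a$ and to $b$ cannot both equal $\{a,b\}$ (that edge is $f$, which is black). Hence $f$ already has at least two white neighbours in the unique white component, and the process stalls immediately.
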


In this case we need that $m>3$ since the vertices of graph $J(3,2)$
are $\{1,2\}, \{1,3\}, \{2,3\}$ and these form a complete graph on
three vertices.  This is an exceptional maximal clique in $J(m,2)$;
every other maximal clique consists of all sets that contain a fixed element.
In fact, any maximal clique in the compressed cliques graph with more than three
vertices will be the set of all vertices whose corresponding sets all
contain a common element. 

%%%%%%%%%%%%%%%%%%%%%%%%

\section{Forbidden subgraphs of compressed cliques graphs}

In the previous section, we observed that the compressed cliques graph is
an induced subgraph of the graph $J'(m,2)$, when $m>3$. In this section we illustrate some
forbidden subgraphs of the compressed cliques graph.

A \emph{claw} is a graph with four vertices and three edges in which one
vertex is adjacent to the other three (this is the complete bipartite
graph $K_{1,3}$). The first graph in Figure~\ref{fig:forbidden} is a
claw. A graph is called \emph{claw-free} if no set of four vertices
induce a subgraph that is a claw. Claw-free graphs have been widely
studied, see, for example, the survey \cite{FFR}.

\begin{prop}
  If $G$ is a graph that has a min-max clique covering with simple
  intersection, then both $G$ and the compressed cliques graph
  $\cg{G}$ are claw-free.
\end{prop}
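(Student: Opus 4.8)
The plan is to argue directly from the simple-intersection structure, showing that any vertex together with three neighbours must induce at least one edge among those neighbours, which rules out the claw. Since we prove the statement for both $G$ and its compressed cliques graph $\cg{G}$, and since $\cg{G}$ also has a min-max clique covering with simple intersection (by the corollary following Theorem~\ref{ccofCG}), it suffices to prove the claim for an arbitrary graph $H$ possessing a min-max clique covering $\mathcal{C} = \{C_1, \dots, C_\ell\}$ with simple intersection; applying this to $H = G$ and then to $H = \cg{G}$ gives both assertions at once.

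**First I would** fix a vertex $v$ and three distinct neighbours $x, y, z$ of $v$, and suppose for contradiction that $\{v, x, y, z\}$ induces a claw, so that $x, y, z$ are pairwise non-adjacent. Because $\mathcal{C}$ has simple intersection, $v$ lies in at most two cliques of the covering; say $v \in C_i \cap C_j$ (allowing $i = j$ if $v$ is in only one clique). Every edge incident to $v$ must be covered by a clique containing $v$, hence by $C_i$ or $C_j$, so each of $x, y, z$ lies in $C_i$ or $C_j$. **Then** by the pigeonhole principle, two of the three neighbours — say $x$ and $y$ — lie in the same clique, $C_i$ or $C_j$. But any two vertices in a common clique are adjacent, contradicting the assumption that $x, y, z$ are pairwise non-adjacent. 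This contradiction shows no claw can be induced, so $H$ is claw-free.

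**The main obstacle** is bookkeeping about how many cliques contain $v$: the simple-intersection hypothesis is exactly what caps this at two, and it is the crux of the pigeonhole step, so I would state carefully that a vertex in a covering with simple intersection belongs to at most two cliques (if it were in three, their triple intersection would be nonempty, violating simple intersection). A secondary point worth a sentence is justifying that $\cg{G}$ inherits a min-max clique covering with simple intersection, which lets the same argument apply verbatim to the compressed cliques graph; this is supplied by the corollary to Theorem~\ref{ccofCG}, so I would simply cite it rather than reprove it. With those two observations in hand the argument is short and requires no case analysis beyond the single pigeonhole step.
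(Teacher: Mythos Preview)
Your proof is correct. For $G$ it is essentially the paper's argument: the center of a claw would lie in three cliques of the covering, violating simple intersection (you phrase this via pigeonhole on the two cliques containing $v$, the paper phrases it as the three edges needing three distinct cliques, but these are equivalent).

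For $\cg{G}$ the paper takes a slightly different route: rather than invoking the corollary that $\cg{G}$ itself admits a min-max clique covering with simple intersection, it argues directly from the label structure, observing that the three leaves of a claw centered at $v_{i,j}$ must each carry a label meeting $\{i,j\}$, so two of them share an element and are adjacent. Your reduction is a bit cleaner since one argument covers both graphs, while the paper's direct label argument avoids appealing to the corollary. Both are short and valid.
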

\begin{proof}
  Let $\mathcal{A}$ be a min-max clique covering of $G$ that satisfies
  simple intersection.  If $G$ has a claw, then the three edges in the
  claw belong to three different cliques. Thus the center vertex in
  the claw belongs to three different cliques. 

Assume that the compressed cliques graph $\cg{G}$ has a claw and that
$v_{i,j}$ is the vertex of degree three in the claw. Then each of the
other three vertices in the claw must adjacent to $v_{i,j}$, so each
will correspond to a set that contains one of $i$ or $j$. This then
implies that at least two of the these three vertices will both
contain $i$ or will both contain $j$ and thus be adjacent, which is a
contradiction.
\end{proof}

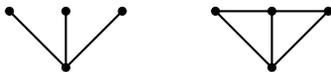
\begin{figure*}[h]
\begin{center}
\begin{tikzpicture}[scale=.75]
\draw [fill] (1,0)  circle (2pt) node [above] {};
\draw [fill] (0,1)  circle (2pt) node [above] {};
\draw [fill] (1,1)  circle (2pt) node [above] {};
\draw [fill] (2,1)  circle (2pt) node[above] {};
\draw [thick] (1, 0) -- (0,1);
\draw [thick] (1, 0) -- (1,1);
\draw [thick] (1, 0) -- (2,1);
\end{tikzpicture}
\qquad
\begin{tikzpicture}[scale=.75]
\draw [fill] (1,0)  circle (2pt) node [above] {};
\draw [fill] (0,1)  circle (2pt) node [above] {};
\draw [fill] (1,1)  circle (2pt) node [above] {};
\draw [fill] (2,1)  circle (2pt) node[above] {};
\draw [thick] (1, 0) -- (0,1) -- (1,1);
\draw [thick] (1, 0) -- (1,1) -- (2,1);
\draw [thick] (1, 0) -- (2,1);
\end{tikzpicture}
\end{center}
\caption{Forbidden induced subgraphs for compressed cliques graphs.\label{fig:forbidden}}
\end{figure*}

Note the graph on the right in Figure~\ref{fig:forbidden} is special
in the sense that it alone cannot be a compressed cliques graph, but
rather if a compressed cliques graph contains the subgraph on the right
as an induced subgraph, then it must contain other vertices and edges.  For
example, consider the graph $T_3$ given in Figure~\ref{t3}.

\begin{figure*}[h]
\begin{center}
\begin{tikzpicture}
\GraphInit[vstyle=simple]
\tikzset{VertexStyle/.style = {shape = circle,fill = black,minimum size = 5pt,inner sep=0pt}}
% Vertices
\Vertex[x=0, y=0] {E} \Vertex[x=-1, y=0] {D} \Vertex[x=1, y=0] {F}
\Vertex[x=-0.5, y=1] {B} \Vertex[x=0.5, y=1] {C} 
\Vertex[x=0, y=2] {A}

% Edges
\Edge(A)(B) \Edge(A)(C)  
\Edge(B)(C) 
\Edge(B)(D) \Edge(B)(E) 
\Edge(C)(E) \Edge(C)(F)
\Edge(D)(E)  \Edge(E)(F) 
\end{tikzpicture}
\end{center}
\caption{$T_3$: an example of a self-compressed graph.}\label{t3}
\end{figure*}
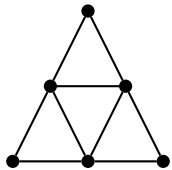

Observe that the graph $T_3$ has clique cover number three, and this
min-max clique cover has simple intersection. Furthermore, $\cg{T_3}$
is itself $T_3$, and includes the graph on the right in
Figure~\ref{fig:forbidden} as an induced subgraph.

Before we come to our next claim regarding compressed cliques graphs,
we define some particular subgraphs. We call a cycle $u_1u_2 \cdots u_tu_1$
\emph{suspended} when exactly one of $u_1, u_2, \ldots, u_t$ has
degree larger than two in $G$, and all of the remaining vertices from
this set have degree two in $G$.

A graph $G$ contains an \emph{ear}, if it contains $K_3$ as a suspended
cycle. Observe that if $G$ contains a suspended cycle of length
at least four, and it is not just a cycle itself, then $G$ must contain a claw, and
hence is forbidden as a compressed cliques graph. 

\begin{lem}
  If $G$ is a graph with a min-max clique cover satisfying simple
  intersection, then $\cg{G}$ does not contain a suspended cycle.
\end{lem}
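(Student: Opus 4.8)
The plan is to split on the length of the suspended cycle, noting that every cycle has length at least three. The length-$\geq 4$ case is essentially already done for us, while the length-$3$ case (an \emph{ear}, i.e. a suspended copy of $K_3$) is where the real work lies.

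First I would dispose of the long cycles. Suppose $\cg{G}$ contained a suspended cycle of length at least four. By definition such a cycle has a (unique) vertex of degree greater than two, so $\cg{G}$ is not itself a cycle; the observation recorded just before this lemma then guarantees that $\cg{G}$ contains a claw. This contradicts the claw-free Proposition above, which asserts that $\cg{G}$ is claw-free. Hence no suspended cycle of length at least four can occur, and it remains only to rule out an ear.

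For the ear case I would work inside $\cg{G}$ using its canonical clique cover $\{D_1,\dots,D_\ell\}$, where $D_i$ collects all vertices labelled $v_{i,j}$. By the Corollary above, this is a min-max clique cover of $\cg{G}$ with simple intersection, so the earlier structural results apply verbatim to $\cg{G}$; in particular Lemma~\ref{lem:neighbor} gives $N_{\cg{G}}[v_{p,q}] = D_p\cup D_q$ for $p\neq q$, and $N_{\cg{G}}[v_{p,p}] = D_p$ for a singleton label. Now let $u_1u_2u_3u_1$ be an ear with $\deg u_2 = \deg u_3 = 2$ and $\deg u_1 \geq 3$. Since the vertices of $\cg{G}$ are labelled by one- and two-element sets that pairwise intersect along the triangle, a short set-theoretic argument shows the three labels either (A) share a common index $i$, or (B) have the form $\{i,j\},\{j,k\},\{i,k\}$ for distinct $i,j,k$. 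In case (B), reading off $\deg u_2 = \deg u_3 = 2$ through the neighbourhood identity forces each of $D_i, D_j, D_k$ into $\{u_1,u_2,u_3\}$; substituting back gives $N[u_1] = D_i\cup D_j = \{u_1,u_2,u_3\}$, so $\deg u_1 = 2$, contradicting $\deg u_1\geq 3$. In case (A), at least one of the degree-two vertices has a genuine two-element label, say $v_{i,c}$ with $c\neq i$; its two neighbours $u_1,u_2$ also contain $i$, so $\{u_1,u_2,u_3\} = D_i\cup D_c$ with $D_i = \{u_1,u_2,u_3\}$, which pins down $D_c$ to the single vertex $v_{i,c}$. As in the proof of the Corollary, a clique $D_c = \{v_{i,c}\}$ forces $C_c \subseteq C_i$, contradicting the maximality of $C_c$. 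The sub-cases in which one of the three labels is a singleton are handled in the same way, the singleton only simplifying the neighbourhood computation.

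The main obstacle is the ear case, and within it the bookkeeping of case (A): one must confirm that the triangle dichotomy is genuinely exhaustive (including singleton labels), and then correctly track which of the cliques $D_j$ the two degree-two triangle vertices force to be small, so as to land on a single-vertex maximal clique and invoke the maximality contradiction. By comparison, case (B) and the length-$\geq 4$ regime are immediate once the canonical cover $\{D_i\}$ and claw-freeness are in hand.
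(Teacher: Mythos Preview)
Your proof is correct. Both you and the paper first dispose of suspended cycles of length at least four via claw-freeness, and then label-chase in the ear case; the underlying mechanism is the same, but your organisation differs in a way worth noting.

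The paper fixes a degree-two vertex $u=v_{i,j}$ and then splits on the label of the other degree-two vertex $w$: either $w=v_{i,i}$ (or $v_{j,j}$), in which case it argues that $x$ must carry the label $\{i,j\}$ as well, a contradiction; or $w=v_{i,k}$ with $k\notin\{i,j\}$, in which case it argues $x=v_{j,k}$ and then observes that any further neighbour of $x$ must hit $u$ or $w$. Your split is instead the natural triangle dichotomy in the Johnson-type graph: either the three labels share a common index (case~A) or they form an intersecting triple $\{i,j\},\{j,k\},\{i,k\}$ (case~B). In case~B you recover the paper's second contradiction ($\deg u_1\le 2$) cleanly via $N[u_1]=D_i\cup D_j\subseteq\{u_1,u_2,u_3\}$. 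In case~A you take a genuinely different exit: rather than forcing two vertices to share a label, you show $D_c=\{v_{i,c}\}$ and invoke the argument from the Corollary's proof that a singleton $D_c$ yields $C_c\subseteq C_i$, contradicting maximality. This is a nice use of the machinery already in place, and it makes explicit a fact (each $D_m$ has at least two vertices) that the paper's proof uses only tacitly when it asserts ``the label on $x$ must be $\{k,j\}$''. Your route is slightly longer but more self-contained.
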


\begin{proof}
  Suppose the compressed cliques graph contains a suspended cycle,
  from the comments above, the suspended cycle must be an ear. Assume
  the vertices of the suspended cycle are $u$ and $w$, both of degree
  two, and $x$ with degree at least three. Then, it follows that at
  least one of $u$ or $w$ must have a label of distinct indices
  $\{i,j\}$, namely, $u=v_{i,j}$. In this case, if the label of $w$ is
  $v_{i,i}$ (or $v_{j,j}$), then the label for $x$ must contain an $i$
  (since it is adjacent to $w$), but also $j$, since other $u$ would
  be adjacent to another vertex outside of the suspended cycle.
  This means that both $u$ and $x$ have the same label in $\cg{G}$.

  If the label of $w$ is given by $w=v_{i,k}$ (or $v_{j,k}$), where
  $k$ is different from $i$ or $j$, then since $u$ and $w$ have degree
  two, the label on $x$ must be $\{k,j\}$. Then any other vertex
  which is adjacent to $x$, must also be adjacent to one of $u$ or
  $w$.
\end{proof}

\section{Vertex-clique graphs}

For any graph $G$ with vertex set $V$ and edge set $E$, we
construct a new graph $H$, obtained from $G$, that has a min-max covering with
simple intersection. For each vertex $v \in V$, we
construct a clique $K_{d(v)}$ where $d(v)$ is the degree of $v$ in
$G$. For each edge $\{u, v\} \in E$, we add an edge between a vertex
in $K_{d(u)}$ and a vertex in $K_{d(v)}$ such that each vertex in
$K_{d(u)}$ (or $K_{d(v)}$) has at most one neighbour outside of
$K_{d(u)}$ (or $K_{d(v)}$).  This new graph is called the
\emph{vertex-clique graph of $G$}. By definition a vertex-clique graph
cannot be a complete graph. The next lemma provides some interesting
properties of a vertex-clique graph. Recall that a \emph{line graph} 
of a given graph $X$, denoted by  $L(X)$, is obtained by associating
a vertex with each edge of $X$ and connecting two vertices with 
an edge if and only if the corresponding edges of $X$ have a vertex in common.

 \begin{lem}\label{lem:vertex-clique-graph} Let $H$ be a vertex-clique graph of $G$. Then
\begin{enumerate}
\item
$H$ is a line graph;
\item
$H$ has a min-max clique covering that satisfies simple intersection; and
\item
$\mathcal{C}(H)=H$.
\end{enumerate}
\end{lem}
\begin{proof}
  If $H$ is the vertex-clique graph of $G$, then $H$ is the line graph
  of the graph formed by performing an edge subdivision on each edge
  of $G$.

  Let $C_v$ be the clique in $H$ that replaces the vertex $v$ in $G$,
  similarly, let $C_{\{u,v\}}$ be the edge in $H$ between a vertex in
  $C_v$ and $C_u$. The set of all $C_v$ and $C_{\{u,v\}}$ form a
  min-max clique covering that satisfies simple intersection.

  Finally we show that $\cg{H} = H$. Note that the set $C_v \cap
  C_u$ is empty, and $C_v \cap C_{\{u,v\}} = \{v\}$. Consider the set
  $C_{\{u,v\}} \cap C_{\{u',v'\}}$. If $C_{\{u,v\}}$ is an edge in
  $H$, then there is no edge $C_{\{u',v\}}$, so any $C_{\{u,v\}}$
  and $C_{\{u',v'\}}$ will be disjoint. Thus the
  intersection of any two cliques in this covering is either empty, or
  has size one. By Theorem~\ref{thm:C(G)isG}, this implies that
  $\mathcal{C}(H)$ is isomorphic to $H$.
\end{proof}

For a graph $G$, we can form a simplified graph called the
\emph{reduced graph of $G$} that has the same positive zero forcing number
as the original graph. First we define an induced path $u_1u_2 \cdots u_t$ in a graph to
be a \emph{suspended path} if the vertices $u_2, u_3, \dots, u_{t-1}$
all have degree two. To form the reduced graph of $G$, first
recursively delete all vertices with degree one.  Once all the
vertices of degree one have been removed, contract any induced
suspended paths of length at least two to an edge.  The graph that
remains after performing both of these operations is called the \emph{reduced graph of
$G$}, and is denoted by $\mathcal{R}(G)$. It is clear that these two
operations do not effect the size of a positive zero forcing set. 
\begin{lem}\label{lem:reduced-graph}
Let $G$ be a graph, then $Z_+(G)=Z_+(\mathcal{R}(G))$.
\end{lem}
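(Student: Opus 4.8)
The plan is to reduce the statement to two elementary operations and to verify that each of them leaves $Z_+$ unchanged; since $\mathcal{R}(G)$ is obtained by iterating these operations, the lemma then follows by induction on the number of operations performed. The two atomic operations are: (O1) deleting a single vertex $v$ of degree one, and (O2) smoothing a single internal degree-two vertex $x$ of a suspended path, that is, passing from $G$ to $G' = (G - x) + \{a,b\}$, where $a,b$ are the two neighbours of $x$. Contracting an entire suspended path to an edge is just a repetition of (O2), each step shortening the path by one while preserving the induced suspended structure, so it suffices to treat one step. I would prove $Z_+(G) = Z_+(G')$ for each of (O1) and (O2) separately.

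For (O1), the key observation is that a degree-one vertex $v$ with unique neighbour $u$ can only ever be forced by $u$, and can only ever block a force made by $u$. Once $u$ is black, $v$ (if still white) lies in a singleton component of $G$ after removing the black set, so it neither interferes with any force $u$ makes into another component nor with forces made by other vertices (none of which is adjacent to $v$). From an optimal positive zero forcing (PZF) set $S'$ for $G - v$ I would argue that $S'$ itself, viewed in $G$, still forces everything and additionally forces $v$ at the end, giving $Z_+(G) \le Z_+(G-v)$. Conversely, from an optimal PZF set $S$ for $G$ I would produce one for $G-v$ of no greater size, distinguishing the cases $v \notin S$ (keep $S$; the only role of $v$, forcing $u$, is irrelevant in $G-v$) and $v \in S$ (replace $v$ by $u$ when $u \notin S$, which only strengthens the initial colouring, or simply drop $v$ when $u \in S$). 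This yields $Z_+(G-v) \le Z_+(G)$, hence equality.

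For (O2), I find it cleanest to recast the claim in terms of the number of forces: writing $f(H) = |V(H)| - Z_+(H)$ for the maximum number of forces in a PZF process on $H$, the identity $|V(G)| = |V(G')| + 1$ shows that $Z_+(G) = Z_+(G')$ is equivalent to $f(G) = f(G') + 1$; intuitively, a valid process in $G$ should force exactly one more vertex, namely $x$, than its counterpart in $G'$. To pass from $G'$ to $G$ I would take an optimal forcing forest of $G'$ and insert $x$: if the edge $\{a,b\}$ is used for a force in $G'$, reroute it through $x$ (replacing one force by the two forces $a \to x \to b$, a net gain of one); otherwise $x$ can be forced last as an isolated white vertex once both $a$ and $b$ are black, again gaining exactly one force. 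The reverse direction contracts $x$ out of an optimal forest of $G$, merging the two forces across $x$ into a single use of the edge $\{a,b\}$. The hard part will be the bookkeeping showing that inserting or deleting $x$ does not block any other force; here it is essential that the neighbours $a,b$ are non-adjacent in $G$ and that $x$ has degree exactly two, which is precisely what the induced suspended-path hypothesis guarantees, and which rules out, for instance, smoothing a degree-two vertex of a triangle (where $Z_+$ would in fact drop). I would discharge this with a short case analysis on whether each force is made by, or into, $a$, $b$, or $x$, paralleling the case analysis already used in the proof of Theorem~\ref{zplus-compress}.
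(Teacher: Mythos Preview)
Your approach is correct. The paper does not actually supply a proof of this lemma: the sentence immediately preceding the statement simply asserts that ``these two operations do not effect the size of a positive zero forcing set'' and leaves the verification to the reader. Your reduction to the two atomic moves (O1) and (O2), together with an induction on the number of moves, is exactly the natural way to substantiate that assertion, and your sketches for each move are sound; in particular, your observation that the induced suspended-path hypothesis forces $a$ and $b$ to be non-adjacent (ruling out, e.g., smoothing a vertex of a triangle) is precisely what makes (O2) go through. So you are not diverging from the paper's argument so much as supplying the details it omits.
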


Applying this reduction to a vertex-clique graph produces a new graph for
which we can determine bounds on the positive zero forcing number.

\begin{theorem}\label{thm:forest}
Let $G$ be a connected vertex-clique graph and $\mathcal{R}(G)$ be the reduced
graph of $G$.
\begin{enumerate}
\item If $\mathcal{R}(G)$ has only one vertex, then $Z_+(G)=1$.
\item If $\mathcal{R}(G)$ is a 3-cycle, then $Z_+(G)=2$.
\item If $\mathcal{R}(G)$ has more than three vertices, let $k$ be the
  number of edges in $\mathcal{R}(G)$ that are themselves maximal
  cliques in $\mathcal{R}(G)$, then $Z_+(G) \leq k$. Further, this upper
  bound can be tight for some graphs $G$.
\end{enumerate}
\end{theorem}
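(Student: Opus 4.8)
The common engine is Lemma~\ref{lem:reduced-graph}: since $Z_+(G)=Z_+(\mathcal{R}(G))$, each part reduces to a computation on the reduced graph. Parts (1) and (2) are then immediate. A single vertex has no neighbour, so it can never be forced and must itself lie in every positive zero forcing set; hence $Z_+(\mathcal{R}(G))=1$. If $\mathcal{R}(G)$ is a $3$-cycle then $\mathcal{R}(G)=K_3$, and the opening observation $Z_+(K_n)=n-1$ gives $Z_+(\mathcal{R}(G))=2$. The real content is the upper bound in part (3), and I would spend essentially all of the effort there.

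First I would pin down the structure of $\mathcal{R}(G)$. By Lemma~\ref{lem:vertex-clique-graph} the vertex-clique graph $G$ carries a min-max clique cover with simple intersection whose cliques are the vertex-cliques $K_{d(v)}$ together with the connecting edges, and $\mathcal{C}(G)=G$. I would check that deleting degree-one vertices and contracting suspended paths preserves simple intersection and creates no new clique overlaps, so the maximal cliques of $\mathcal{R}(G)$ come in exactly two kinds: pairwise disjoint \emph{large} cliques $B_1,\dots,B_m$ of size at least three (the surviving vertex-cliques), which together cover $V(\mathcal{R}(G))$, and the $k$ maximal cliques that are single edges, which I will call \emph{connectors}. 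A vertex lying in two connectors would make its two connector-neighbours an induced suspended path (they cannot be adjacent without destroying maximality), so after reduction every vertex is an endpoint of at most one connector and sits in exactly one large clique. Contracting each $B_i$ to a point turns $\mathcal{R}(G)$ into a connected multigraph $M$ on $m$ nodes whose $k$ edges are the connectors.

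With this picture the plan is to exhibit a positive zero forcing set of size $k$. Fix a linear order $B_1<\dots<B_m$ and orient each connector from the lower clique to the higher one; colour the tail of every connector black and all other vertices white, so exactly $k$ vertices are black. I would then run the cascade: $B_1$ has every incident connector pointing outward, so $B_1$ is entirely black, and once a clique $B_i$ is entirely black, each tail $t\in B_i$ sees its connector-head $h$ as its \emph{only} white neighbour, because all of $B_i$ is black and $t$ has no second outside neighbour; thus $t$ forces $h$. Taking the cliques in order, $B_j$ becomes fully black once the connectors entering it --- necessarily from lower cliques --- have all fired, after which it forces its own out-heads. An induction along the order forces every vertex, giving $Z_+(\mathcal{R}(G))\le k$.

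The delicate part, and the step I expect to cost the most care, is verifying the colour-change condition at each force: after the current black set is removed one must confirm that the head $h$ really is the unique white neighbour of its tail $t$ within the component of $h$, which uses precisely the disjointness of the large cliques and the ``at most one connector per vertex'' property. A genuine subtlety to address is that the cascade silently assumes every vertex of every $B_i$ is the endpoint of a connector; if some $B_i$ carries two or more \emph{connector-free} vertices, these have equal closed neighbourhoods and, by the corollary on vertices with identical closed neighbourhoods, the forcing set must grow. I would therefore either state the result for large cliques that are saturated by their connectors or add one seeding black vertex per such clique as an additive correction. Finally, for tightness I would take the triangular prism (two triangles joined by a perfect matching), which arises as $\mathcal{R}(G)$ for $G$ the vertex-clique graph of a theta graph: here $k=3$, all three tails are needed, and a direct check that no two vertices form a positive zero forcing set shows $Z_+=3=k$.
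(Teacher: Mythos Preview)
Your treatment of parts (1) and (2), and your tightness example (the triangular prism $K_2\,\Box\,K_3$, arising as $\mathcal{R}$ of the vertex-clique graph of $K_{2,3}$), all coincide with what the paper does.

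For part (3) your argument is a close cousin of the paper's but not identical. The paper runs a breadth-first colouring: pick one large clique and colour it entirely black; colour the far endpoints of its connectors \emph{grey}; then repeatedly find a large clique containing both grey and uncoloured vertices, colour the uncoloured ones black, and propagate grey along connectors. At the end the black vertices form a positive zero forcing set, and the count comes from the observation that each black vertex is the endpoint of a distinct connector edge. Your linear-ordering-and-tails construction achieves the same effect with a cleaner bijection between black vertices and connectors; the cascade you describe is exactly the forcing order the paper's procedure implicitly produces.

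The substantive divergence is your ``genuine subtlety'' about connector-free vertices. The paper does not share your worry: it simply asserts that, because $G$ is a connected vertex-clique graph, every vertex of $\mathcal{R}(G)$ lies in exactly two maximal cliques, one in $C^{(2)}$ and one in $C^{(\geq 3)}$. Under that assertion both your argument and the paper's go through verbatim with exactly $k$ black vertices, and your proposed remedies (restricting the hypothesis or adding seeding vertices) are unnecessary. So the gap you flag is precisely a structural claim that the paper states as a fact rather than proves; you are right to notice that it requires justification, but your resolution---altering the theorem---is not what the paper does. If you want your proof to match the paper's, you should instead argue that the reduction process leaves no large clique with a connector-free vertex, and drop the additive-correction discussion.
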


\begin{proof}
  Note that each cycle in $G$ is either unchanged in
  $\mathcal{R}(G)$ or reduced to a cycle of smaller length.  If
  $\mathcal{R}(G)$ has only one vertex, then $G$ is a path, and
  $Z_+(G)=1$.  If $\mathcal{R}(G)$ is a 3-cycle, then $G$ is a
  unicyclic graph, and $Z_+(G)=2$.

  Suppose that $\mathcal{R}(G)$ has more than three vertices.  Let
  $C^{(2)}$ be the set of all maximal cliques of size two in
  $\mathcal{R}(G)$, and let $C^{(\geq 3)}$ be the set of all maximal
  cliques of size at least three in $\mathcal{R}(G)$. Since $G$ is a
  connected vertex-clique graph, we know that every vertex in
  $\mathcal{R}(G)$ must belong to exactly two maximal cliques, one in
  $C^{(2)}$ and the other in $C^{(\geq 3)}$.  Note that $C^{(\geq 3)} \not=
  \emptyset$ as $\mathcal{R}(G)$ has more than three vertices.
  Color all vertices of $\mathcal{R}(G)$ using the following
  procedure.

\begin{enumerate}
\item Pick a clique $C \in C^{(\geq 3)}$, color all vertices of $C$
  black.  For each edge in $C^{(2)}$, if one endpoint is black and the
  other is not coloured, then color the uncoloured vertex to grey.

\item \label{alg:s2}
If there is a clique $C \in C^{(\geq 3)}$ that contains both
grey and uncoloured vertices, then color all uncoloured vertices of $C$ to black.

\item
For each edge in $C^{(2)}$, if one endpoint is black and the other is uncoloured,
then color the uncoloured vertex to grey.

\item
If all vertices of $\mathcal{R}(G)$ are coloured, then stop;
otherwise, go to Step~\ref{alg:s2}.
\end{enumerate}

In the above procedure, when a vertex $u$ is coloured black, there must
be a neighbour $v$ such that $\{u, v\} \in C^{(2)}$ and $v$ is coloured grey
because every vertex in $\mathcal{R}(G)$ belongs to exactly two
maximal cliques (one in $C^{(2)}$ and the other in $C^{(\geq 3)}$).  Thus,
every black vertex in $\mathcal{R}(G)$ is an endpoint of an edge in
$C^{(2)}$. On the other hand, for each edge in $C^{(2)}$, at most one endpoint
is coloured black. As all black vertices form a positive zero
forcing set of $\mathcal{R}(G)$, it follows from
Lemma~\ref{lem:reduced-graph} that $Z_+(G) = Z_+(\mathcal{R}(G)) \leq
|C^{(2)}|$.
\end{proof}

Finally, we give an example that shows the upper bound in the
theorem above cannot be improved in general.  Let $H$ be the
vertex-clique graph of complete bipartite graph $K_{2,3}$ (see Figure \ref{fig:reduce}). Then
$\mathcal{R}(H) = K_2 \Box K_3$, where $\Box$ denotes the Cartesian product
of graphs. Note that the number of edges in $K_2
\Box K_3$ that are themselves maximal cliques is three, and
$Z_+(H)=3$. In general, if $H$ is the vertex clique graph of $K_{2,n}$, then $\mathcal{R}(H) = K_2
\Box K_n$. In this case, $Z_+(\mathcal{R}(H)) = n$ and the number of
edges in $\mathcal{R}(H) $ that form maximal cliques is precisely $n$.

\begin{figure*}[h]
\begin{center}
\begin{tabular}{ccc}
\begin{tikzpicture}
\GraphInit[vstyle=simple]
\tikzset{VertexStyle/.style = {shape = circle,fill = black,minimum size = 5pt,inner sep=0pt}}
% Vertices
\Vertex[x=0, y=1] {A}   
\Vertex[x=-1, y=0] {B}   \Vertex[x=0, y=0] {C} \Vertex[x=1, y=0]{D}
\Vertex[x=0, y=-1] {E}
% Edges
\Edge(A)(B) \Edge(A)(C) \Edge(A)(D)
\Edge(E)(B) \Edge(E)(C) \Edge(E)(D)
\end{tikzpicture}
&
\begin{tikzpicture}
\GraphInit[vstyle=simple]
\tikzset{VertexStyle/.style = {shape = circle,fill = black,minimum size = 5pt,inner sep=0pt}}
% Vertices
\Vertex[x=-.25, y=1.25] {A1}   \Vertex[x=0, y=.75] {A2}   \Vertex[x=.25, y=1.25] {A3}   
\Vertex[x=-1, y=-.25] {B1}   \Vertex[x=0, y=-.25] {C1} \Vertex[x=1, y=-.25]{D1}
\Vertex[x=-1, y=.25] {B2}   \Vertex[x=0, y=.25] {C2} \Vertex[x=1, y=.25]{D2}
\Vertex[x=-.25, y=-1.25] {E1}\Vertex[x=0, y=-.72] {E2}\Vertex[x=.25, y=-1.25] {E3}
% Edges
\Edge(A1)(A2) \Edge(A1)(A3) \Edge(A3)(A2)
\Edge(E1)(E2) \Edge(E1)(E3) \Edge(E3)(E2)
\Edge(A1)(B2) \Edge(A2)(C2) \Edge(A3)(D2)
\Edge(B2)(B1) \Edge(C2)(C1) \Edge(D2)(D1)
\Edge(B1)(E1) \Edge(C1)(E2) \Edge(D1)(E3)
\end{tikzpicture}
&
\begin{tikzpicture}
\GraphInit[vstyle=simple]
\tikzset{VertexStyle/.style = {shape = circle,fill = black,minimum size = 5pt,inner sep=0pt}}
% Vertices
\Vertex[x=-.5, y=1.25] {A1}   \Vertex[x=0, y=.75] {A2}   \Vertex[x=.5, y=1.25] {A3}   
\Vertex[x=-.5, y=-1.25] {E1}\Vertex[x=0, y=-.72] {E2}\Vertex[x=.5, y=-1.25] {E3}
% Edges
\Edge(A1)(A2) \Edge(A1)(A3) \Edge(A3)(A2)
\Edge(E1)(E2) \Edge(E1)(E3) \Edge(E3)(E2)
\Edge(A1)(E1) \Edge(A2)(E2) \Edge(A3)(E3)
\end{tikzpicture} \\
$K_{2,3}$ & $H$ (the vertex-clique graph of $K_{2,3}$) & $\mathcal{R}(H)$ \\
\end{tabular}
\caption{Example of a graph where the bound in Theorem~\ref{thm:forest} is tight.}\label{fig:reduce}
\end{center}
\end{figure*}
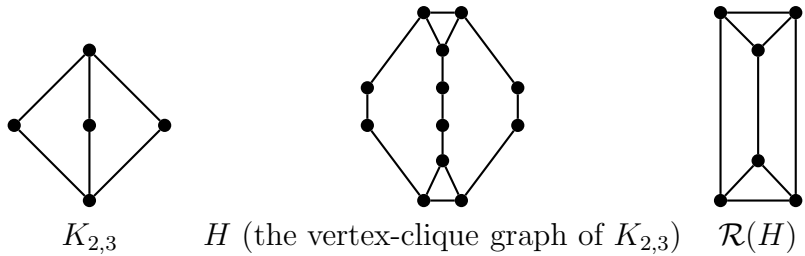

\section{Examples of compressed cliques graphs}

We now consider several examples of compressed cliques graphs $G$ with an eye
on comparing the values of $Z_+(G)$ and $|V(G)| -\CC(G)$.
Our focus on these graphs comes from the following well known lemma (see also \cite{zplus}).

\begin{lem}\label{lem:ccbound}
For any graph $G$ 
\[
|V(G)| - \CC(G) \leq Z_+(G).
\]
\end{lem}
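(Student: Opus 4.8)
The plan is to establish the inequality $|V(G)| - \CC(G) \leq Z_+(G)$ by showing that the complement of any positive zero forcing set cannot be too large relative to the clique cover number; equivalently, I want to produce a clique cover whose size is at least $|V(G)| - Z_+(G)$. The natural bridge between positive zero forcing and clique coverings is the forcing-tree decomposition mentioned in the introduction: the positive colour change rule partitions the vertices into vertex-disjoint induced rooted forcing trees. I would take an optimal positive zero forcing set $S$ with $|S| = Z_+(G)$, run the forcing process, and record the resulting collection of forcing trees.

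The key step is to convert the forcing structure into a clique cover. First I would observe that if a vertex $u$ forces a vertex $w$ during the process, then $u$ and $w$ are adjacent, so each edge of a forcing tree is a genuine edge of $G$. The essential point is that each force is a ``unique white neighbour in a component'' event, and I would argue that the edges along which forces happen can be grouped into cliques. More carefully, the standard argument counts newly-blackened vertices: each vertex outside $S$ gets forced exactly once, so $|V(G)| - Z_+(G)$ equals the number of forcing edges. I would then show that these forcing edges, together with the edges among $S$, can be covered by at most $Z_+(G)$ cliques less than a controlled amount — but the cleaner route is the complementary counting: each clique in a covering can ``absorb'' at most one white vertex, echoing Lemma~\ref{lem:onlyone} in spirit. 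So if $\mathcal{C} = \{C_1, \dots, C_{\CC(G)}\}$ is a minimal clique cover, I want each $C_i$ to account for at most one non-forcing-source, giving $|V(G)\setminus S| \leq \CC(G)$ is the wrong direction; instead I aim to show a positive zero forcing set can be built of size at most $|V(G)| - \CC(G)$ is also wrong. The correct direction is that the \emph{number of white vertices} $|V(G)| - Z_+(G)$ is bounded \emph{above} by $\CC(G)$, which I would get by assigning to each forcing edge a distinct clique from a min-max cover containing that edge, then checking no clique is used twice because two white-forced vertices in the same clique would violate the single-white-neighbour rule.

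Concretely, I would fix a minimal clique covering $\mathcal{C}$ of size $\CC(G)$ and, for the optimal forcing set $S$, map each white vertex $w \notin S$ to a clique $C(w) \in \mathcal{C}$ containing the forcing edge $\{u,w\}$ where $u$ forces $w$. The heart of the argument is injectivity of $w \mapsto C(w)$: if two white vertices $w_1, w_2$ were assigned the same clique $C$, then both lie in $C$, and I would derive a contradiction with the uniqueness required by the positive colour change rule, since whichever of $w_1,w_2$ is forced first would still have the other as a white neighbour within the component containing $C$. Injectivity then yields $|V(G)| - Z_+(G) = |V(G) \setminus S| \leq |\mathcal{C}| = \CC(G)$, which rearranges to the claimed bound.

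The main obstacle will be making the injectivity argument airtight, because the positive colour change rule operates component-by-component after removing black vertices, so two white vertices sharing a clique need not be ``visible'' to the same forcing vertex at the same stage of the process. I would need to track the state of the process at the moment of the first force among $w_1, w_2$ and argue that the shared clique $C$ keeps them in a common component (any vertex of $C$ adjacent to the forcing vertex keeps the two white vertices connected), so the forcing vertex would not have a unique white neighbour. This is essentially the local obstruction already exploited in Lemma~\ref{lem:onlyone}, so I expect the proof to reduce to an adaptation of that lemma's reasoning to an arbitrary clique in the covering rather than to the special sets $C_{i,j}$.
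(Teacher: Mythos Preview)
The paper does not actually prove Lemma~\ref{lem:ccbound}; it is stated as a well-known fact with a reference to~\cite{zplus}, so there is no in-paper argument to compare against.

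Your proposal, once it settles down after the exploratory back-and-forth, is correct. The final map --- send each initially white vertex $w$ to a clique $C(w)\in\mathcal{C}$ containing the forcing edge $\{u,w\}$ --- is well defined, and your injectivity argument is sound: if $w_1\neq w_2$ both land in the same clique $C$, then $w_1,w_2,u_1\in C$ (where $u_1$ is the vertex that forces whichever of $w_1,w_2$ is forced first). At that moment $w_1$ and $w_2$ are both white and adjacent, hence lie in the same white component, and $u_1$ is adjacent to both, contradicting the positive colour change rule. This yields $|V(G)\setminus S|\le \CC(G)$, i.e.\ $|V(G)|-Z_+(G)\le \CC(G)$, as desired. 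The only housekeeping point is isolated vertices: they must lie in $S$ (nothing can force them), so they never enter the map and cause no trouble.

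It is worth noting that the argument typically found in the literature (and implicitly invoked via the citation) goes through linear algebra: one chains $|V(G)|-Z_+(G)\le |V(G)|-M_+(G)=\mr_+(G)\le \CC(G)$, using that a sum of $\CC(G)$ rank-one positive semidefinite matrices realizes the pattern of $G$. Your route is purely combinatorial and more self-contained; it is essentially the same local obstruction used in Lemma~\ref{lem:onlyone}, applied to an arbitrary clique cover rather than to the special sets $C_{i,j}$, exactly as you anticipated.
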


Our first result gives a large family for graphs for which the
positive zero forcing number is equal to the difference between the
number of vertices and the clique covering number.  If $\cg{G}$
contains no \emph{induced} cycles (other than $K_3$), then $G$ is a
chordal graph in which a min-max clique covering satisfies simple
intersection. Theorem 4.6.5 of~\cite{FatemehThesis} states that for a
chordal graph $G$, $Z_+(G) = |V(G)|-\CC(G)$. Thus we have the
following result.

\begin{theorem}
If $G$ has a min-max clique cover with simple intersection and the compressed cliques
graph has no induced cycles, other than $K_3$, then
\[
Z_+(G) = |V(G)|-\CC(G).
\]
\end{theorem}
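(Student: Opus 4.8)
The plan is to reduce the claim to the cited chordal-graph result by establishing two structural facts: first that $G$ is chordal, and second that $G$ inherits a min-max clique covering with simple intersection. The key observation is that $G$ and its compressed cliques graph $\cg{G}$ share their essential cycle structure, so hypotheses on $\cg{G}$ translate back to $G$.

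First I would argue that $G$ is chordal. Suppose $G$ contained an induced cycle $v_1 v_2 \cdots v_t v_1$ of length $t \geq 4$. I want to push this forward through the map $\phi$ of (\ref{phidefined}) to produce an induced cycle (other than $K_3$) in $\cg{G}$, contradicting the hypothesis. The natural tool is Proposition~\ref{prop:phiconnects}, which guarantees that paths in $G$ map to paths (or single points) in $\cg{G}$. The delicate point is that consecutive cycle vertices $v_i, v_{i+1}$ are adjacent, hence lie in a common clique $C_a$, so $\phi(v_i)$ and $\phi(v_{i+1})$ both carry the label $a$ and are therefore equal or adjacent in $\cg{G}$; meanwhile non-consecutive vertices, being non-adjacent in $G$, lie in no common clique, so their images are non-adjacent in $\cg{G}$. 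If no two consecutive images collapse to the same vertex, the images $\phi(v_1), \dots, \phi(v_t)$ trace out an induced cycle in $\cg{G}$ of length at least four, the forbidden configuration. One must also check that the cycle cannot collapse down to a triangle or an edge: collapsing would force two non-adjacent $v_i, v_j$ to share a clique label, contradicting simple intersection together with the cycle being induced.

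Second, assuming $G$ is chordal with a min-max clique cover satisfying simple intersection, I would simply invoke Theorem~4.6.5 of~\cite{FatemehThesis}, which states precisely that $Z_+(G) = |V(G)| - \CC(G)$ for chordal graphs. The opening sentence of the surrounding discussion already asserts that ``if $\cg{G}$ contains no induced cycles other than $K_3$, then $G$ is chordal in which a min-max clique covering satisfies simple intersection,'' so the proof really amounts to citing this chain of implications and then applying the external theorem.

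The main obstacle is the first step: rigorously verifying that an induced $t$-cycle in $G$ cannot degenerate under $\phi$ and genuinely yields a forbidden induced cycle in $\cg{G}$. The subtle cases are when several consecutive cycle vertices lie in the same intersection set $C_{a,b}$ (so their $\phi$-images coincide) and when the cyclic sequence of labels might close up into a shorter cycle; one must track the labels $\{i,j\}$ carefully and use simple intersection to rule out chords appearing among the images. If the forward-mapping argument proves awkward, an alternative is to reason contrapositively at the level of cliques---an induced cycle of length $\geq 4$ in $G$ forces the same cyclic clique-incidence pattern that defines an induced cycle in $\cg{G}$---but either way the crux is controlling how $\phi$ acts on induced cycles.
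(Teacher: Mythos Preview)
Your proposal is correct and follows exactly the route the paper takes: the paper's entire argument is the one-sentence assertion that if $\cg{G}$ has no induced cycles other than $K_3$ then $G$ itself is chordal, followed by the citation of Theorem~4.6.5 of~\cite{FatemehThesis}. Your plan simply supplies the details behind that assertion by pushing an induced $t$-cycle ($t\geq 4$) of $G$ forward through $\phi$, and the label-tracking you outline (each $v_i$ lies in $C_{a_i}\cap C_{a_{i+1}}$ with non-consecutive labels disjoint because non-consecutive cycle vertices are non-adjacent) does go through without any collapsing, yielding an induced $t$-cycle in $\cg{G}$.
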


If $G$ consists of a series of cliques $C_1, \dots, C_k$ in which only
consecutive cliques intersect, then we call $G$ a \emph{path of
  cliques}. In this case we have equality in the inequality of
Lemma~\ref{lem:ccbound}.

\begin{cor}
Let $G$ be a graph that is a path of cliques, then
\[
|V(G)| - \CC(G) = Z_+(G).
\]
\end{cor}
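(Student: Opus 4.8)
The plan is to show that a path of cliques satisfies the hypotheses of the preceding theorem, namely that it has a min-max clique covering with simple intersection and that its compressed cliques graph contains no induced cycles other than $K_3$. First I would take the defining cliques $C_1, \dots, C_k$ of the path of cliques $G$ and verify they form the appropriate covering. By the definition of a path of cliques, only consecutive cliques intersect, so $C_i \cap C_j \neq \emptyset$ only when $|i-j| = 1$. In particular, any triple intersection $C_i \cap C_j \cap C_m$ with distinct indices is empty, which is precisely the simple intersection condition. One should also confirm the covering is min-max: each $C_i$ is a maximal clique (since the graph is built from these cliques as its maximal cliques) and the covering has size $\CC(G)$, which follows because the edges joining $C_{i,i+1}$-type intersections force each clique to appear.

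Next I would analyze the structure of the compressed cliques graph $\cg{G}$. Since only consecutive cliques meet, the nonempty intersection sets are $C_{i,i}$ (the part of $C_i$ in no other clique) and $C_{i,i+1}$ (the consecutive overlaps). The vertices of $\cg{G}$ are therefore labeled $v_{i,i}$ for $i \in \{1,\dots,k\}$ and $v_{i,i+1}$ for $i \in \{1,\dots,k-1\}$ (whichever are nonempty), and two such vertices are adjacent exactly when their index sets intersect. The key observation is that the index sets are all subsets of $\{1,\dots,k\}$ of the form $\{i\}$ or $\{i,i+1\}$, so the intersection pattern is highly restricted; the resulting graph is essentially a ``caterpillar of cliques'' strung along a line and cannot close up into a long cycle.

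The main step is then to argue that $\cg{G}$ contains no induced cycle of length four or more. Here I expect the heart of the argument: any cycle in $\cg{G}$ would have to traverse a sequence of labels whose consecutive members intersect, and because every label is an interval of length one or two inside the totally ordered index set $\{1,\dots,k\}$, such a walk is forced to move monotonically (or reverse only within a single $K_3$ formed by $v_{i,i}, v_{i,i+1}, v_{i+1,i+1}$). Consequently any induced cycle must live inside one of these local triangles and hence be a $K_3$. I would make this precise by noting that each maximal clique of $\cg{G}$ consists of labels sharing a common index, and these maximal cliques overlap only in single vertices arranged in a linear fashion, so $\cg{G}$ is chordal with no induced $C_n$ for $n \geq 4$.

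Having established that $\cg{G}$ has no induced cycle other than $K_3$ and that $G$ admits a min-max clique covering with simple intersection, the corollary follows immediately by applying the preceding theorem, which gives $Z_+(G) = |V(G)| - \CC(G)$. The main obstacle I anticipate is the bookkeeping in the cycle-freeness argument: one must carefully rule out induced four-cycles that might arise when an overlap set $C_{i,i+1}$ is adjacent to both $v_{i,i}$ and $v_{i+1,i+1}$, confirming these configurations only ever produce triangles rather than longer induced cycles. Once that local analysis is clean, the rest is a direct invocation of the established machinery.
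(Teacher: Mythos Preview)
Your proposal is correct and takes essentially the same approach as the paper: the corollary is stated immediately after the theorem on chordal compressed cliques graphs, with the paper offering no further argument beyond noting that a path of cliques is defined by consecutive intersections. Your write-up simply fills in the verification that the hypotheses of that theorem are met, and your key observation---that the vertex labels of $\cg{G}$ are intervals $\{i\}$ or $\{i,i+1\}$ in a totally ordered set, making $\cg{G}$ an interval graph and hence chordal---is exactly the right way to dispose of induced cycles of length at least four.
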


%%%%%%%%%%%%%%%%%%%%%%%%%%%
The next family of graphs that we consider are a generalization of the {\em
  musical graph} $M_n$ defined in \cite{Knuth}. For $n\geq3$, the
graph $M_n$ has $2n$ vertices, $5n$ edges, and is isomorphic to the
Cayley graph 
$Cay( \mathbb{Z}_{2n}, \{ \pm 1, \pm (n-1), n\} )$. Figure~\ref{M5} is the musical graph $M_4$.

\begin{figure*}[h]
\begin{center}
\begin{tikzpicture}[scale=.75]
\GraphInit[vstyle=simple]
\tikzset{VertexStyle/.style = {shape = circle,fill = black,minimum size = 5pt,inner sep=0pt}}
% Vertices
\Vertex[x=0, y=0] {A} \Vertex[x=0, y=3] {B}
\Vertex[x=0.5, y=0.5] {C}\Vertex[x=0.5, y=2.5] {D}
\Vertex[x=2.5, y=0.5] {E} \Vertex[x=2.5, y=2.5] {F}
\Vertex[x=3, y=0] {G} \Vertex[x=3, y=3] {H}
% Edges

\Edge(A)(B) \Edge(A)(C) \Edge(A)(D) \Edge(A)(E) \Edge(A)(G)
\Edge(B)(C) \Edge(B)(D) \Edge(B)(F) \Edge(B)(H)
\Edge(C)(D) \Edge(C)(E) \Edge(C)(G)
\Edge(D)(F) \Edge(D)(H)
\Edge(E)(F) \Edge(E)(G) \Edge(E)(H)
\Edge(F)(G) \Edge(F)(H) 
\Edge(G)(H)

\end{tikzpicture}
\end{center}
\caption{The musical graph $M_4$.} \label{M5} 
\end{figure*}
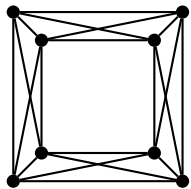

The graph $M_n$ has a min-max clique covering with $n$ cliques that
satisfies simple intersection. The positive zero forcing number for
$M_n$ can then be easily calculated.

\begin{lem}
Let $n \geq 3$, then $Z_{+}(M_n)=n+2$.
\end{lem}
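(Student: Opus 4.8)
My plan is to compute $Z_+(M_n)$ by understanding the min-max clique covering of $M_n$ with simple intersection, passing to its compressed cliques graph, and applying Theorem~\ref{zplus-compress}. The first step is to identify the clique structure explicitly. Writing the vertices as $\mathbb{Z}_{2n}$ with connection set $\{\pm 1, \pm(n-1), n\}$, I would look for the $n$ maximal cliques that form the asserted min-max clique covering. From Figure~\ref{M5} one sees the two ``end'' faces (the triangle-like cliques at the left and right) together with a band of quadrilateral-shaped cliques around the middle; I would pin down each $C_i$ as a concrete set of vertices, verify it is a clique, check that every edge lies in some $C_i$, confirm there are exactly $n$ of them, and confirm the simple intersection property (no vertex lies in three of the $C_i$).

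Once the covering $\{C_1,\dots,C_n\}$ is in hand, I would compute $\cg{M_n}$. Because the graph is vertex-transitive and highly structured, I expect the intersection pattern of the $C_i$ to be essentially cyclic, so I anticipate the compressed cliques graph to be a small fixed graph (likely a cycle, or a cycle with a couple of extra features coming from the two exceptional end-cliques) whose size does not grow with $n$. The point of Theorem~\ref{zplus-compress} is that
\[
|V(M_n)| - Z_+(M_n) = |V(\cg{M_n})| - Z_+(\cg{M_n}),
\]
so once I know $|V(\cg{M_n})|$ and $Z_+(\cg{M_n})$ for the (small) compressed graph, I can solve for $Z_+(M_n)$ using $|V(M_n)| = 2n$. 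If the compressed graph turns out to be a single cycle $C_m$, then $Z_+(C_m) = 2$ and the formula would immediately give $Z_+(M_n) = 2n - m + 2$; matching this against the claimed value $n+2$ tells me exactly which $m$ (and hence which compressed graph) to expect, giving a useful consistency check on the clique identification in the first step.

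I would then verify the resulting value against the lower bound $|V(M_n)| - \CC(M_n) \le Z_+(M_n)$ from Lemma~\ref{lem:ccbound}. With $|V(M_n)| = 2n$ and $\CC(M_n) = n$ (which follows from the $n$-clique covering being minimal, provable by exhibiting $n$ edges each forced into a distinct clique), this bound gives $Z_+(M_n) \ge n$, so the compressed-graph computation must account for the extra $+2$. I would double check the small cases (e.g.\ $n=3,4$) directly, since the end-cliques are exceptional and may behave differently for small $n$; $M_4$ in Figure~\ref{M5} is the natural test case.

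\textbf{Main obstacle.} The hardest part will be the first step: correctly identifying the min-max clique covering with simple intersection and, relatedly, nailing down the precise isomorphism type of $\cg{M_n}$. The two exceptional end-cliques break the clean cyclic symmetry, so I expect the compressed graph to be a cycle decorated with a small amount of extra structure, and getting its vertex count and $Z_+$ value exactly right—so that Theorem~\ref{zplus-compress} yields $n+2$ rather than $n$ or $n+1$—is where the careful bookkeeping lies. Everything after the compressed graph is identified is a routine application of the theorems already proved in the excerpt.
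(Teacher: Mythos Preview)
Your approach is essentially the same as the paper's: identify the min-max clique covering with simple intersection, recognize the compressed cliques graph, and apply Theorem~\ref{zplus-compress} together with $|V(M_n)|=2n$. Your anticipated ``obstacle'' dissolves once you use vertex-transitivity of the Cayley graph---there are no exceptional end-cliques, the $n$ maximal $4$-cliques are arranged in a cyclic chain with each consecutive pair overlapping in exactly two vertices, so $\cg{M_n}=C_n$ on the nose and the computation $2n - Z_+(M_n) = n - 2$ finishes it.
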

\begin{proof}
Observe that the compressed cliques graph of musical graph $M_n$ is $C_n$. Therefore,
using Theorem~\ref{zplus-compress} we have
\[
|V(M_n)|-Z_{+}(M_n)=|V(\mathcal{C}(M_{n}))|-Z_{+}(\mathcal{C}(M_{n}))=n-2
\]
and so $Z_{+}(M_n)=n+2$. 
\end{proof}

The musical graph is an example of a graph for which
\[
Z_+(G) \leq |V(G)| -|\CC(G)| +2.
\]
We can generalize this to a family of related graphs.
Let $G$ be a graph for which $\{C_1,C_2,\dots,C_\ell\}$ is a min-max
clique covering.  If each of $C_i \cap C_{i+1}$ for $i = 1, \dots, \ell
- 1$, as well as $C_1 \cap C_\ell$, are non-empty (these are the
\emph{consecutive} cliques), while all other intersections of these
cliques are empty, then $G$ is called a \emph{cycle of cliques}.
These graphs are a generalization of the graphs defined as
$C_t(K_n)$~\cite{PetersThesis}.

\begin{thm}
Let $G$ be a cycle of cliques, then
\[
Z_+(G) \leq |V(G)| - \CC(G) +2.
\]
\end{thm}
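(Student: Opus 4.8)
The plan is to reduce everything to the compressed cliques graph and then exhibit an explicit positive zero forcing set there. A cycle of cliques is connected, is not a clique (for $\ell \geq 3$), and its min-max clique covering $\{C_1, \dots, C_\ell\}$ has simple intersection, since only consecutive cliques meet. Hence Theorem~\ref{zplus-compress} applies and gives $|V(G)| - Z_+(G) = |V(\cg{G})| - Z_+(\cg{G})$, while Theorem~\ref{ccofCG} gives $\CC(G) = \CC(\cg{G})$. Substituting, the desired inequality $Z_+(G) \leq |V(G)| - \CC(G) + 2$ is equivalent to $Z_+(\cg{G}) \leq |V(\cg{G})| - \CC(\cg{G}) + 2$. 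So it suffices to prove the bound for $\cg{G}$ itself, which is again a cycle of cliques but now with every pairwise intersection $C_{i,i+1}$ of size exactly one and every private part $C_{i,i}$ of size at most one.

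Next I would describe the reduced graph concretely. Write $s_i$ for the single vertex in $C_i \cap C_{i+1}$ (indices mod $\ell$); these $\ell$ vertices induce the cycle $s_1 s_2 \cdots s_\ell s_1$. For each clique that has a non-empty private part, write $t_i$ for its vertex; it is adjacent precisely to $s_{i-1}$ and $s_i$, so $\{s_{i-1}, s_i, t_i\}$ is a triangle. If $p$ denotes the number of cliques carrying a private vertex, then $|V(\cg{G})| = \ell + p$, and one checks $\CC(\cg{G}) = \ell$ because each cycle edge $\{s_{i-1}, s_i\}$ lies in the single maximal clique $C_i$. Hence the target bound becomes $Z_+(\cg{G}) \leq p + 2$.

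The construction I would use is to colour black the two consecutive shared vertices $s_1, s_2$ together with all $p$ private vertices, leaving $s_3, \dots, s_\ell$ white; this is a candidate set of size $p + 2$. To see it forces, remove the black set: the white vertices form the single path $s_3 - s_4 - \cdots - s_\ell$. The black vertex $s_2$ has $s_3$ as its only white neighbour and so forces $s_3$; symmetrically $s_1$ forces $s_\ell$ (its only white neighbour being $s_\ell$). The two fronts then propagate around the cycle: once $s_{i-1}$ is black its unique remaining white neighbour is $s_i$, so it forces $s_i$, and likewise going the other way, until the two fronts meet and all of $s_3, \dots, s_\ell$ are black. This yields $Z_+(\cg{G}) \leq p+2$, completing the reduction.

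The main obstacle is the careful bookkeeping in the forcing argument: at every step I must confirm that the forcing vertex really has a unique white neighbour within its connected component once the current black set is deleted, so that the positive colour change rule genuinely applies. The private vertices are the delicate point---each $t_i$ sits in a triangle and can never itself be forced while white (its only neighbours are $s_{i-1}, s_i$), which is exactly why they must all be placed in the initial black set and why the count $p + 2$ arises naturally. I would also separately check the small cases $\ell = 3, 4$ (where the white path degenerates and where the simple intersection hypothesis on three mutually consecutive cliques must be confirmed) and verify the claim $\CC(\cg{G}) = \ell$ there, to be sure the reduction identity is applied correctly.
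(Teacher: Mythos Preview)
Your argument is correct, but it takes a longer road than the paper does. The paper works directly on $G$: pick one vertex $v_i \in C_i \cap C_{i+1}$ for $i = 2, \dots, \ell-1$, colour these $\ell-2$ vertices white and everything else black, and observe that a vertex of $C_1 \cap C_2$ forces $v_2$, then $v_2$ forces $v_3$, and so on around. That is the whole proof---no compression, no appeal to Theorems~\ref{zplus-compress} or~\ref{ccofCG}.

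Your route through $\cg{G}$ reproduces exactly this forcing set after the reduction (your white vertices $s_3,\dots,s_\ell$ are a cyclic shift of the paper's $v_2,\dots,v_{\ell-1}$), so the underlying idea is identical; you have simply wrapped it in the compression machinery. That buys you nothing here and costs you something: for $\ell = 3$ all three cliques are mutually consecutive, so the covering need not have simple intersection (e.g.\ $K_{1,3}$ with its three edge-cliques is a ``cycle of cliques'' in the paper's sense but has a vertex in all three cliques), and the compression theorems do not apply. You flag this and plan to treat $\ell=3$ separately, which is fine since the bound then reads $Z_+(G)\le |V(G)|-1$ and is trivial; but note that the paper's direct construction handles $\ell=3$ uniformly with no case split. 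For $\ell \ge 4$ your claim that the covering has simple intersection is correct (three pairwise-consecutive indices in $\mathbb{Z}_\ell$ would force $\ell=3$), so the reduction is legitimate there.
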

\begin{proof}
Assume that $G$ is a cycle of cliques $\{C_1,C_2,\dots,C_\ell\}$.
For $i = 2,3, \dots, \ell-1$, let $v_i$ be a vertex in $C_{i} \cap
C_{i+1}$ (since $G$ is a cycle of cliques, this is possible). Colour
the vertices $v_2, \dots, v_{\ell-1}$ white and all other vertices in
$G$ black. We claim that this is a positive zero forcing set for $G$.
To see this, note that any vertex in $C_1 \cap C_2$ can force
$v_{2,3}$, which in turn forces $v_{3,4}$. Continuing in this manner, all
the vertices in $G$ will be forced to black.
\end{proof}

There is a subfamily of the cycles of cliques for which the positive
zero forcing number is equal to the number of vertices minus the
clique covering number of the graph.

\begin{thm}
  Let $G$ be a cycle of cliques labeled $\{C_1,C_2, \dots, C_\ell\}$.
  If there are two cliques $C_i$ and $C_j$ with $C_{i,i}$ and
  $C_{j,j}$ non-empty, then
\[
Z_+(G) = |V(G)| - \CC(G).
\]
\end{thm}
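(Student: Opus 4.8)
The plan is to push everything through the compressed cliques graph and there exhibit an explicit optimal positive zero forcing set. Write $\mathcal{C}=\{C_1,\dots,C_\ell\}$ for the min-max clique covering, so that $\CC(G)=\ell$, and let $S=\{i : C_{i,i}\neq\emptyset\}$ be the set of cliques carrying a private vertex; by hypothesis $m:=|S|\ge 2$. Since a cycle of cliques is connected and is not a clique, Theorem~\ref{zplus-compress} gives $|V(G)|-Z_+(G)=|V(\cg{G})|-Z_+(\cg{G})$, while Theorem~\ref{ccofCG} gives $\CC(G)=\CC(\cg{G})$. Substituting, the target identity $Z_+(G)=|V(G)|-\CC(G)$ is equivalent to
\[
Z_+(\cg{G})=|V(\cg{G})|-\CC(\cg{G}).
\]
The inequality $Z_+(\cg{G})\ge |V(\cg{G})|-\CC(\cg{G})$ is exactly Lemma~\ref{lem:ccbound}, so it suffices to produce a positive zero forcing set of $\cg{G}$ of size $|V(\cg{G})|-\CC(\cg{G})$.

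Next I would record the structure of $\cg{G}$. Because only consecutive cliques meet, the vertices $w_i:=v_{i,i+1}$ (for $i=1,\dots,\ell$, indices read cyclically) are all present and induce the $\ell$-cycle $w_1w_2\cdots w_\ell w_1$, while the remaining vertices are the ears $e_i:=v_{i,i}$ for $i\in S$, with $e_i$ adjacent precisely to $w_{i-1}$ and $w_i$. Hence $|V(\cg{G})|=\ell+m$ and $\CC(\cg{G})=\ell$, so the required size is $m$. I claim that
\[
B=\{\,w_i : i\in S\,\}
\]
is a positive zero forcing set of $\cg{G}$; note $|B|=m$, and that since $m\ge 2$ the deletion of $B$ severs the cycle at two or more points, a fact that will be the linchpin.

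Finally I would verify that $B$ forces $\cg{G}$. Deleting $B$ leaves the $m$ open arcs of the cycle cut out by $S$, each possibly carrying a pendant ear $e_i$ at its right endpoint $w_{i-1}$ (when $i\in S$ but $i-1\notin S$), together with isolated ears $e_i$ for which $i-1\in S$ as well. Each black vertex $w_s$ (for $s\in S$) has its two cycle-neighbours $w_{s-1}$ and $w_{s+1}$ lying in two \emph{different} components, precisely because $m\ge 2$; so in the component on its right $w_s$ sees the single white neighbour $w_{s+1}$ and forces it, after which the force sweeps rightward—each interior arc vertex, once black, has a single white cycle-neighbour and no white ear-neighbour—until it reaches the right endpoint $w_{i-1}$, at which stage the pendant ear $e_i$ is the unique remaining white neighbour of $w_{i-1}$ and is forced; any isolated ear is forced immediately by its two black neighbours. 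Running this over all arcs blackens every vertex. I expect the main obstacle to be exactly this last verification: one must check that $m\ge 2$ is what places a black vertex's two cycle-neighbours in distinct components (for $m=1$ the lone black vertex sees both ends of the single arc and can force nothing, which is why the two-private-vertex hypothesis is essential), and one must dispose of the degenerate arcs—empty arcs producing isolated ears, and length-one arcs—so that the forcing genuinely reaches every cycle vertex and every ear.
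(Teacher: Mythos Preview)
Your argument is correct. Both the reduction to $\cg{G}$ via Theorems~\ref{zplus-compress} and~\ref{ccofCG} and the explicit positive zero forcing set $B=\{w_i:i\in S\}$ work as you describe; the case analysis for short or empty arcs and isolated ears goes through exactly as you anticipate.

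Your route differs from the paper's in the choice of black set. The paper picks only \emph{two} black cycle vertices, namely $v_{j-1,j}$ and $v_{j,j+1}$ (the two cycle vertices in the clique $D_j$), and colours all ears \emph{except} $v_{i,i}$ and $v_{j,j}$ black; it then runs two sweeps in opposite directions around the cycle that meet at index $i$, where $v_{i,i}$ is forced last. You instead place a black cycle vertex $w_s$ beneath every ear and keep all ears white, launching $m$ parallel rightward sweeps, one per arc. Your construction is more symmetric and makes the role of the hypothesis $m\ge2$ completely transparent (with $m=1$ the single black vertex sees both ends of the lone arc in one component and is stuck), whereas the paper's construction uses only two cut points but requires the two sweeps to be coordinated before the final ear can be forced. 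Both yield a black set of size $m=|V(\cg{G})|-\CC(\cg{G})$, so either gives the desired equality.
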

\begin{proof}
  Assume $i$ is the smallest $i$ such that $C_{i,i}$ is non-empty and
  $j$ is the largest value for which $C_{j,j}$ is non-empty. 

  The clique compressed cliques graph for $G$ will contain a cycle with vertices
  labeled $v_{i,i+1}$ for $i = 1, \dots, \ell-1$ and
  $v_{1,\ell}$. Since the sets $C_{i,i}$ and $C_{j,j}$ are non-empty
  the graph will also have vertices $v_{i,i}$ (adjacent to $v_{i-1,i}$
  and $v_{i,i+1}$), and $v_{j,j}$ (adjacent to $v_{j-1,j}$ and $_{j,j+1}$).

  Colour the vertices $v_{i,i}$ and $v_{j,j}$ white along with every
  vertex $v_{k,k+1}$ in the cycle, except the vertices in the clique
  $C_j$. The remaining vertices are coloured black. In this colouring,
  there are $\ell$ white vertices, so the number of black vertices is
  $|V(G)| - \ell$. We show that the black vertices form a
  positive zero forcing set for $G$.

The vertex $v_{j,j+1}$ will force $v_{j,j}$ and then start a forcing
sequence by forcing $v_{j+1, j+2}$ and continuing to $v_{i-1,i}$.
Similarly, $v_{j-1,j}$ will force the vertex $v_{j-2,j-1}$, and
continue forcing along the cycle until the vertex $v_{i,i+1}$. Finally,
$v_{i,i+1}$ can force $v_{i,i}$.
\end{proof}

%%%%%%%%%%%%%%%

Our next example is a family of graphs for which there is a large gap
between the positive zero forcing number and the number of vertices minus the
clique covering number of the graph.

Define a graph $X(n;\ell_1,\dots,\ell_k)$ that has vertices $x_1, \dots ,x_n$ which induce a
clique. In addition, this graph also contains disjoint cycles $C_1, \ldots, C_k$,
in which $C_i$ has length $\ell_i$, and each cycle contains
exactly two vertices from the set $\{x_1 \dots,x_n\}$ (see also Figure \ref{xgraph} where
$X(8;4,4,4,4)$ is depicted). In this case, the number of vertices in
this graph is $n + \sum_{i=1}^k (\ell_i -2)$ and $\CC(X) = 1 +
\sum_{i=1}^k (\ell_i -1)$. Moreover, the only such clique cover is a min-max clique cover
that has simple intersection.

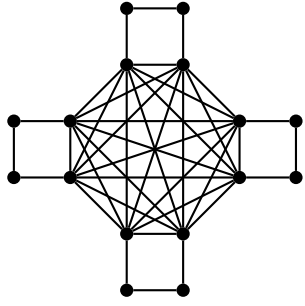
\begin{figure*}[h]
\begin{center}
\begin{tikzpicture}[scale=.75]
\GraphInit[vstyle=simple]
\tikzset{VertexStyle/.style = {shape = circle,fill = black,minimum size = 5pt,inner sep=0pt}}
% Vertices
\Vertex[x=0, y=1] {A} \Vertex[x=-1, y=1] {A1}
\Vertex[x=0, y=2] {B}\Vertex[x=-1, y=2] {B1}

\Vertex[x=1, y=0] {C}\Vertex[x=1, y=-1] {C1}
\Vertex[x=1, y=3] {D}\Vertex[x=1, y=4] {D1}

\Vertex[x=2, y=0] {E}\Vertex[x=2, y=-1] {E1}
\Vertex[x=2, y=3] {F}\Vertex[x=2, y=4] {F1}

\Vertex[x=3, y=1] {G} \Vertex[x=4, y=1] {G1}
\Vertex[x=3, y=2] {H} \Vertex[x=4, y=2] {H1}
% Edges

\Edge(A)(B) \Edge(A)(C) \Edge(A)(D) \Edge(A)(E) \Edge(A)(F) \Edge(A)(G) \Edge(A)(H)
\Edge(B)(C) \Edge(B)(D) \Edge(B)(E) \Edge(B)(F) \Edge(B)(G) \Edge(B)(H)
\Edge(C)(D) \Edge(C)(E) \Edge(C)(F) \Edge(C)(G) \Edge(C)(H)
\Edge(D)(E) \Edge(D)(F) \Edge(D)(G) \Edge(D)(H)
\Edge(E)(F) \Edge(E)(G) \Edge(E)(H)
\Edge(F)(G) \Edge(F)(H) \Edge(G)(H)

\Edge(A1)(B1) \Edge(D1)(F1)  \Edge(E1)(C1) \Edge(G1)(H1)
\Edge(A1)(A) \Edge(C1)(C) \Edge(E1)(E) \Edge(G1)(G)
\Edge(B)(B1) \Edge(D)(D1) \Edge(F)(F1) \Edge(H)(H1)

\end{tikzpicture}
\end{center}
\caption{The graph $X(8;4,4,4,4)$.}\label{xgraph}
\end{figure*}

\begin{thm}
  Let $n$, $\ell_1, \dots, \ell_k$ be integers with $n \geq \sum_i
  \ell_i$.  Then $X=X(n;\ell_1,\dots,\ell_k)$ has a min-max clique covering with
  simple intersection such that
  \[Z_{+}(X(n;\ell_1,\dots,\ell_k)) =
n-1 = |V(X)| -\CC(X) + k.
\]
\end{thm}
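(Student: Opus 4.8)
The plan is to prove the single substantive equality $Z_+(X)=n-1$; the rest of the displayed chain is then pure arithmetic. Substituting the already-recorded values $|V(X)|=n+\sum_{i=1}^{k}(\ell_i-2)$ and $\CC(X)=1+\sum_{i=1}^{k}(\ell_i-1)$ into $|V(X)|-\CC(X)+k$, the two sums $\sum\ell_i$ cancel and the $-2k$ and $+k+k$ terms collapse, leaving exactly $n-1$. So once $Z_+(X)=n-1$ is established, the identity $n-1=|V(X)|-\CC(X)+k$ follows immediately.

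Before the forcing argument I would pin down the clique covering that witnesses $\CC(X)$ and check the two structural properties claimed in the statement. The covering consists of the single clique $K_n$ on $\{x_1,\dots,x_n\}$ together with, for each cycle $C_i$, the edges of $C_i$ other than the edge joining its two clique-vertices $a_i,b_i$ (that is, the edges of the bridging path of new vertices from $a_i$ to $b_i$). Since the cycles $C_1,\dots,C_k$ are vertex-disjoint, every clique-vertex is an endpoint of at most one cycle and therefore lies in at most two of these cliques (the big clique and one bridging edge), while each new vertex lies in exactly two bridging edges; hence no vertex lies in three cliques, giving simple intersection. Each listed clique is also maximal, so this is a min-max clique covering.

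For the lower bound I would simply invoke the fact recorded in the introduction that a graph containing a clique on $n$ vertices has positive zero forcing number at least $n-1$: here $X$ contains the induced $K_n$ on $\{x_1,\dots,x_n\}$, so $Z_+(X)\ge n-1$. For the upper bound I would exhibit a positive zero forcing set of size $n-1$. Because $n\ge\sum_i\ell_i\ge 3k>2k$ (each cycle has length at least three), some clique-vertex, say $x_n$, is not an endpoint of any cycle; colour $S=\{x_1,\dots,x_{n-1}\}$ black and all remaining vertices white. Deleting $S$, the white graph decomposes into the isolated vertex $\{x_n\}$ together with the $k$ bridging-path interiors, each a path whose two ends are adjacent to the black clique-endpoints $a_i,b_i$. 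In the singleton component $\{x_n\}$ any black clique-vertex has $x_n$ as its unique white neighbour inside $\{x_n\}\cup S$ and so forces it; in each path-interior component, the black endpoint $a_i$ has the first path-vertex as its only white neighbour in that component, and the force then propagates all the way along the path. Thus $S$ forces all of $X$, giving $Z_+(X)\le n-1$, and combined with the lower bound $Z_+(X)=n-1$.

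The only delicate point, which I would flag and verify rather than assume, is the component-local nature of the positive colour-change rule: one must confirm that in each induced subgraph $W_i\cup S$ the forcing vertex genuinely has a \emph{unique} white neighbour. In particular, although $x_n$ is adjacent to many black vertices, the relevant observation is that it sits alone in its own white component, so it is forceable by any of them; and an endpoint $a_i$, though adjacent to the white $x_n$ in $X$, does not see $x_n$ inside the component $W_i\cup S$ and hence can legitimately force into its cycle's path. I expect this bookkeeping to be the main thing requiring care, but no genuinely new idea beyond it.
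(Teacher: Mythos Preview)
Your proof is correct and follows essentially the same approach as the paper: the paper's entire proof reads ``To verify this, colour all but one of the vertices in the $n$-clique black,'' which is exactly your upper-bound construction, with the lower bound, the clique-covering verification, and the arithmetic all left implicit. Your version simply fills in those details carefully; your extra precaution of choosing the white clique vertex $x_n$ off all cycles is convenient but not strictly necessary, since even if the white vertex lay on some cycle $C_j$, any third clique vertex would still force it within its white component.
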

\begin{proof}
To verify this, colour all but one of the vertices in
the $n$-clique black.
\end{proof}

\section{Further work}

We have shown that for graphs that have a clique covering satisfying
simple intersection, we can determine the positive zero forcing number
of the graph from the positive zero forcing number of the compressed
cliques graph. Further, any compressed cliques graph will be an induced
subgraph of the generalization of the Johnson graph $J'(n,2)$ given in
Section~\ref{sec:Jonhson}. This graph plays a special role in this
theory.

In this paper, we defined a simple intersecting clique covering of a
graph. This means that any vertex of the graph is contained in at most
two cliques in the covering. Clearly, this can be generalized. We will
say a clique covering of a graph has \emph{$s$-wise intersection} if
any vertex is contains in at most $s$ cliques in the clique
covering. Then we could generalize the Johnson graph $J(n,s)$, to the
graph $J'(n,s)$. The vertices of this graph will be subsets of size at
most $s$ from $\{1, \dots, n\}$ and two vertices will be adjacent if
and only if there sets are intersecting. Then every graph will have a
clique covering that has $s$-wise intersection for $s$ sufficiently
large. Then for any graph $G$, the compressed cliques graph of $G$ is
an induced subgraph of $J'(n,s)$. Moreover, if we can determine the positive
zero forcing number for the compressed cliques graph, then we can determine
the positive zero forcing number for the original graph $G$.

\end{document}